\renewcommand{\leq}{\leqslant}
\renewcommand{\le}{\leqslant}
\renewcommand{\geq}{\geqslant}
\renewcommand{\ge}{\geqslant}
\numberwithin{equation}{section}
\newtheorem{de}{Definition}[section]
\newtheorem{thm}{Theorem}[section]
\newtheorem{prop}[thm]{Proposition}
\newtheorem{lem}[thm]{Lemma}
\newcommand{\R}{\mathbb{R}}
\newcommand{\N}{\mathbb{N}}
\newcommand{\Z}{\mathbb{Z}}
\newcommand{\lam}{\lambda}
\newcommand{\al}{\alpha}
\newcommand{\xs}{\overline{x}}
\newcommand{\ys}{\overline{y}}
\newcommand{\ts}{\overline{t}}
\newcommand{\tas}{\overline{\tau}}
\newcommand{\p}{\partial}
\newcommand{\s}{\sigma}
\newcommand{\ep}{\epsilon}
\newcommand{\I}{\mathcal{I}_s}
\newcommand{\Xs}{\overline{X}}
\newcommand{\Ys}{\overline{Y}}
\newcommand{\beq}{\begin{equation}}
\newcommand{\eeq}{\end{equation}}
\newcommand{\beqs}{\begin{equation*}}
\newcommand{\eeqs}{\end{equation*}}
\newcommand{\beqa}{\begin{eqnarray}}
\newcommand{\eeqa}{\end{eqnarray}}
\newcommand{\beqas}{\begin{eqnarray*}}
\newcommand{\eeqas}{\end{eqnarray*}}
\newcommand{\LIM}{\displaystyle\lim}
\newcommand{\SUM}{\displaystyle\sum}
\title[Homogenization and Orowan's law]{Homogenization and Orowan's law\\
for anisotropic fractional operators of any order}
\author{Stefania Patrizi and Enrico Valdinoci}
\thanks{The authors have been supported by the
ERC grant 277749 ``EPSILON Elliptic
Pde's and Symmetry of Interfaces and Layers for Odd Nonlinearities''}
\address{Weierstra{\ss} Institut f{\"u}r Angewandte und Stochastik,
Mohrenstrasse 39, D-10117 Berlin, Germany}
\email{Stefania.Patrizi@wias-berlin.de} 
\email{Enrico.Valdinoci@wias-berlin.de}
\begin{document}

\begin{abstract}
We consider an anisotropic L{\'e}vy operator $\mathcal{I}_s$
of any order $s\in(0,1)$ and we consider the homogenization
properties of an evolution equation.

The scaling properties and the effective Hamiltonian that
we obtain is different according to the cases $s<1/2$ and $s>1/2$.

In the isotropic onedimensional case, we also prove
a statement related to the so-called Orowan's law,
that is an appropriate scaling of the effective Hamiltonian
presents a linear behavior.
\end{abstract}

\subjclass[2010]{74Q15, 35B27, 35R11, 82D25.}
\keywords{Crystal dislocation, homogenization,
fractional operators.}

\maketitle

\section{Introduction}

In this paper we study an evolutionary problem run by
a fractional and possibly anisotropic operator
of elliptic type.

These type of equations arise natural in crystallography,
in which the solution of the equation has the physical
meaning of the atom dislocation inside the crystal structure,
see e.g. the detailed discussion of the Pierls-Nabarro
crystal dislocation model in~\cite{years after}.
\bigskip

Due to their mathematical interest and in view of
the concrete applications in physical models, these
problems have been extensively studied
in the recent literature, also using new methods coming
from the analysis of fractional operators,
see for instance~\cite{mp, mp2, gonzalezmonneau, dpv, dfv}
and references therein.
\bigskip

In particular, here we study an homogenization problem,
related to long-time behaviors of the system at a macroscopic scale.
The scaling of the system and the results obtained will be different
according to the fractional parameter~$s\in(0,1)$.
Namely, when~$s>1/2$
the effective Hamiltonian ``localizes'' and it only depends
on a first order differential operator.
Conversely, when~$s<1/2$, the non-local features are predominant
and the effective Hamiltonian will involve
the fractional operator of order~$s$.
That is, roughly speaking, for any~$s\in(0,1)$,
the effective Hamiltonian is
an operator of order~$\min\{2s,1\}$, which reveals the stronger
non-local effects present in the case~$s<1/2$.
\bigskip

The strong non-local features of the case~$s<1/2$
are indeed quite typical in crystal dislocation dynamics,
see~\cite{dpv} and~\cite{dfv}.
Nevertheless, for any~$s\in(0,1)$, we will be able to show
that a suitably scaled effective Hamiltonian behaves linearly
with respect to the leading operator, thus providing
an extension of the so-called Orowan's law.
\bigskip

We now recall in further detail the state of the
art for the homogenization of fractional problems
in crystal dislocation, then we introduce 
the formal setting that we deal with and present
in details our results.

In \cite{mp} Monneau and the first  author study an homogenization problem for the evolutive Pierls-Nabarro model, which is a phase field model describing dislocation dynamics. They consider the following equation 
\begin{equation}\label{uepmonneau}
\begin{cases}
\p_{t}
u^\epsilon=\mathcal{I}_1[u^\epsilon(t,\cdot)]-W'\left(\frac{u^\epsilon}{\epsilon}\right)+\s
\left(\frac{t}{\epsilon},\frac{x}{\epsilon}\right)&\text{in}\quad \R^+\times\R^N\\
u^\epsilon(0,x)=u_0(x)& \text{on}\quad \R^N,
\end{cases}
\end{equation}
where $W$ is a periodic potential and $\mathcal{I}_1$ is an anisotropic L{\'e}vy operator of order 1, which includes as  particular case the operator $-(-\Delta)^\frac{1}{2}$, and they prove that the solution 
$u^\ep$ of \eqref{uepmonneau} converges as $\ep\rightarrow 0$ to the solution $u^0$ of the following homogenized problem 
\begin{equation}\label{ueffettmonneau}
\begin{cases}
\p_{t} u=\overline{H}(\nabla_x u,\mathcal{I}_1[u(t,\cdot)])
&\text{in}\quad \R^+\times\R^N\\
u(0,x)=u_0(x)& \text{on}\quad \R^N.
\end{cases}
\end{equation}
For $\ep=1$, the solution $u^\ep$ has the physical meaning of an atom dislocation along a slip plane (the rest position of the atom lies on the lattice that is prescribed by the periodicity of the potential $W$).  The number $\ep$ describes the ratio between the microscopic scale and the macroscopic scale and then it  is a small number.  After a suitable rescaling one gets equation
\eqref{uepmonneau}. The limit  $u^0$ can be interpreted as a macroscopic plastic strain satisfying the macroscopic plastic flow rule \eqref{ueffettmonneau}. The function 
$\overline{H}$, usually called effective Hamiltonian, is determined, as usual in homogenization, by a cell problem, which is in this case, for  $p\in\R^N$ 
and $L\in\R$, the following:
\begin{equation}\label{vmaunneau}
\begin{cases}
\lam+\p_{\tau} v=\mathcal{I}_1[v(\tau,\cdot)]+L-W'(v+\lam \tau+p\cdot y)+\s
(\tau,y)&\text{in}\quad \R^+\times\R^N\\ v(0,y)=0& \text{on}\quad
\R^N.
\end{cases}
\end{equation}
For  any $p\in\R^N$ 
and $L\in\R$, the quantity $\lambda=\lambda(p,L)$ is the unique number for which there exists a solution $v$ of \eqref{vmaunneau} which is bounded in $\R^+\times\R^N$.
Therefore,
the function  $\overline{H}(p,L):=\lambda(p,L)$
is well defined, and, in addition, this function
turns out to be continuous and non-decreasing in $L$.

In a second paper \cite{mp2}, the authors consider, as a
particular case, the one in which~$N=1$,
$\mathcal{I}_1=-(-\Delta)^\frac{1}{2}$ is the half Laplacian and $\sigma\equiv 0$, and they study the behavior of $\overline{H}(p,L)$ for small $p$ and $L$. In this regime they recover the Orowan's law, which claims that 
\beqs \overline{H}(p,L)\sim c_0|p|L\eeqs
for some constant of proportionality $c_0>0$.  To show this last result, estimates on the layer solution associated to $-(-\Delta)^\frac{1}{2}$, i.e. on the solution $\phi$ of 
\begin{equation}\label{phimonneau}
\begin{cases}-(-\Delta)^\frac{1}{2}\phi=W'(\phi)&\text{in}\quad \R\\
\phi'>0&\text{in}\quad \R\\
\LIM_{x\rightarrow-\infty}\phi(x)=0,\quad\LIM_{x\rightarrow+\infty}\phi(x)=1,\quad\phi(0)=\frac{1}{2},
\end{cases}
\end{equation}
are needed. Such estimates were  proved in \cite{gonzalezmonneau} under suitable assumptions on $W$,  while the existence of a unique solution $\phi$ 
of \eqref{phimonneau} was proved in \cite{csm}.

Recently, these kind of  estimates have been proved for layer solutions associated to the fractional Laplacian  $-(-\Delta)^s$  for  $s\in(0,1)$ 
by Palatucci, Savin and the second author in \cite{psv}.  More general
results on $\phi$ were obtained 
by Dipierro, Palatucci and the second author in~\cite{dpv} for
the case $s\in\left[\frac{1}{2},1\right)$. See also~\cite{cs} for
related results.

In this paper, in view of these new estimates, we want to extend the results of \cite{mp} and \cite{mp2} to the case where the non-local operator in \eqref{uepmonneau} is an 
 anisotropic L{\'e}vy operator of any order $s\in(0,1)$.  Precisely, given $\varphi\in C^2(\R^N)\cap L^\infty(\R^N)$, let us define 
\beq\label{slapla} \I[\varphi](x):=PV\int_{\R^N}\frac{\varphi(x+y)-\varphi(x)}{|y|^{N+2s}}g\left(\frac{y}{|y|}\right)dy,\eeq
where $PV$ stands for the principal value of the integral and the function $g$ satisfies
\begin{itemize}
\item [(H1)] $g\in C({\bf S}^{N-1}),\,g> 0$,\,$g$ even.\end{itemize} 
When $g\equiv C(N,s)$ with $ C(N,s)$ suitable constant depending on the dimension N and on the exponent  $s$, then \eqref{slapla} is the integral representation of 
$-(-\Delta)^s$. 
 
In addition to (H1) we make the following assumptions:
\begin{itemize}
\item [(H2)]$W\in C^{1,1}(\R)$ and $W(v+1)=W(v)$ for any $v\in\R$;
\item [(H3)]$\sigma\in C^{0,1}(\R^+\times\R^N)$ and $\s(t+1,x)=\s(t,x)$, $\sigma(t,x+k)=\sigma(t,x)$ for any
$k\in\Z^N$ and $(t,x)\in\R^+\times\R^N$;
\item [(H4)]$u_0\in W^{2,\infty}(\R^N)$.
\end{itemize}
For $s>\frac{1}{2}$ we consider the following homogenization problem:
\begin{equation}\label{ueps>1/2}
\begin{cases}
\p_{t}
u^\epsilon=\ep^{2s-1}\I[u^\epsilon(t,\cdot)]-W'\left(\frac{u^\epsilon}{\epsilon}\right)+\s
\left(\frac{t}{\epsilon},\frac{x}{\epsilon}\right)&\text{in}\quad \R^+\times\R^N\\
u^\epsilon(0,x)=u_0(x)& \text{on}\quad \R^N,
\end{cases}
\end{equation}
and for $s<\frac{1}{2}$:

\begin{equation}\label{ueps<1/2}
\begin{cases}
\p_{t}
u^\epsilon=\I[u^\epsilon(t,\cdot)]-W'\left(\frac{u^\epsilon}{\epsilon^{2s}}\right)+\s
\left(\frac{t}{\epsilon^{2s}},\frac{x}{\epsilon}\right)&\text{in}\quad \R^+\times\R^N\\
u^\epsilon(0,x)=u_0(x)& \text{on}\quad \R^N.
\end{cases}
\end{equation}
Remark that the scalings for $s>\frac{1}{2}$ and  $s<\frac{1}{2}$ are different.  They formally coincide when $s=\frac{1}{2}$. 
We prove that the solution $u^\ep$ of \eqref{ueps>1/2} converges as $\ep\rightarrow 0$ to the solution $u^0$ of the homogenized problem 
\begin{equation}\label{ueffetts>1/2}
\begin{cases}
\p_{t} u=\overline{H}_1(\nabla_x u)
&\text{in}\quad \R^+\times\R^N\\
u(0,x)=u_0(x)& \text{on}\quad \R^N,
\end{cases}
\end{equation}
with an effective Hamiltonian $\overline{H}_1$ which does not depend on $\I$ anymore, while the solution 
$u^\ep$ of \eqref{ueps<1/2} converges as $\ep\rightarrow 0$ to  $u^0$ solution of the following
\begin{equation}\label{ueffetts<1/2}
\begin{cases}
\p_{t} u=\overline{H}_2(\I [u])
&\text{in}\quad \R^+\times\R^N\\
u(0,x)=u_0(x)& \text{on}\quad \R^N,
\end{cases}\end{equation}
 with an effective Hamiltonian $\overline{H}_2$  not depending  on the gradient. 
As we will see, the functions  $\overline{H}_1$ and $\overline{H}_2$ are determined by the following cell problem:
\begin{equation}\label{v}
\begin{cases}
\lam+\p_{\tau} v=\I[v(\tau,\cdot)]+L-W'(v+\lam \tau+p\cdot y)+\s
(\tau,y)&\text{in}\quad \R^+\times\R^N\\ v(0,y)=0& \text{on}\quad
\R^N,
\end{cases}
\end{equation} that is $\overline{H}_1$ and $\overline{H}_2$ are determined by the unique $\lam$ for which \eqref{v} possesses a bounded solution (according to the cases $s>\frac{1}{2}$ and 
$s<\frac{1}{2}$, respectively). We observe that the solutions of \eqref{ueffetts>1/2} and \eqref{ueffetts<1/2} may have quite different behaviors, since $\nabla u$ and $\I[u]$ may be very different at a given point, even in dimension 1 and when $s$ is close to $\frac{1}{2}$ (see for instance \cite{dsv}).
Following  \cite{mp}, in order to solve \eqref{v}, we show for any $p\in\R^N$ and $L\in\R$ the existence of a unique solution of 

\begin{equation}\label{w}
\begin{cases}
\p_{\tau} w=\I[w(\tau,\cdot)]+L-W'(w+p\cdot y)+\s
(\tau,y)&\text{in}\quad \R^+\times\R^N\\ w(0,y)=0& \text{on}\quad
\R^N,
\end{cases}
\end{equation}
and we look for some $\lam$ such that $w-\lam\tau$ is bounded. Precisely we have:
\begin{thm}[Ergodicity]\label{ergodic}Assume (H1)-(H4). For $L\in\R$ and $p\in\R^N$, there
exists a unique viscosity solution $w\in C_b(\R^+\times\R^N)$ of
\eqref{w} and there exists a unique $\lam\in\R$ such that $w$
satisfies: 
\begin{equation*}
{\mbox{$\displaystyle\frac{w(\tau,y)}{\tau}$ converges towards $\lam$ as
$\tau\rightarrow+\infty$, locally uniformly in $y$.}}\end{equation*} The real
number  $\lam$ is denoted by $\overline{H}(p,L)$. The function
$\overline{H}(p,L)$ is continuous on $\R^N\times\R$ and
non-decreasing in $L$.
\end{thm}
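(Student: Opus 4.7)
The strategy I would follow is the Lions--Papanicolaou--Varadhan approach to the ergodic cell problem, adapted to the non-local parabolic setting as in~\cite{mp}.

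\emph{Existence and a priori bounds.} Since $W'$, $\sigma$ and $L$ are bounded and $\I$ satisfies a comparison principle on bounded functions, I would first produce $w$ by Perron's method, using the linearly growing barriers $\pm C(1+\tau)$ as a sub/supersolution pair vanishing at $\tau=0$; uniqueness follows from the parabolic comparison principle for $\partial_\tau-\I+W'(\cdot+p\cdot y)$, which applies because $W'$ is Lipschitz. The decisive a priori estimate is a uniform-in-$\tau$ bound on the oscillation of $w(\tau,\cdot)$ in $y$. The equation in~\eqref{w} is invariant under the discrete action $y\mapsto y+k$, $w\mapsto w-p\cdot k+n$ for every $(k,n)\in\Z^N\times\Z$, because $W'$ is $1$-periodic, $\sigma$ is $\Z^N$-periodic in $y$ by (H3), and $\I$ annihilates affine functions by the evenness of $g$ in (H1) (the principal value of an odd integrand vanishes). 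Hence $w(\tau,y+k)+p\cdot k-n$ is again a viscosity solution of~\eqref{w}, and comparison yields a Lipschitz-type modulus of $w(\tau,\cdot)$ in $y$ depending only on $|p|$ and not on $\tau$.

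\emph{Ergodic limit and monotonicity.} Armed with this uniform $y$-modulus and with the $1$-periodicity of $\sigma$ in $\tau$, the existence of $\lam$ follows the scheme of~\cite{mp}: comparison produces an almost-subadditivity inequality of the form
\[
\sup_y w(\tau+s,y)\le \sup_y w(\tau,y)+\sup_y w(s,y)+C,
\]
together with a matching lower bound for $\inf_y w$. A Fekete-type argument then shows that $w(\tau,y)/\tau$ converges as $\tau\to+\infty$ to a constant $\lam$ that is independent of $y$, locally uniformly; uniqueness of such a $\lam$ is tautological. Monotonicity of $\overline{H}(p,L)=\lam(p,L)$ in $L$ is immediate from the comparison principle: if $L'\ge L$ the right-hand side of~\eqref{w} increases, so $w_{L'}\ge w_L$ and $\lam(p,L')\ge\lam(p,L)$. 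Continuity in $L$ is obtained from a similar Lipschitz comparison, and continuity in $p$ from the stability of viscosity solutions under uniform convergence on compact sets, using the $y$-equicontinuity established above to pass to the limit inside the nonlinearity $W'(w+p\cdot y)$.

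\emph{Main obstacle.} The hardest point is the uniform $y$-oscillation bound, and the subsequent extraction of the ergodic limit, in the case $s<1/2$: there the long-range tails of $\I$ are heavier and the dissipative effect of $W'$ is weaker relative to the natural scaling of the operator, so the translation-comparison argument demands a careful control of the non-local contributions coming from far away. The evenness assumption (H1) on $g$ and the periodic structure of $W$ and $\sigma$ enter in an essential way precisely to prevent these tails from destroying the ergodic behavior.
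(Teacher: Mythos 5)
Your overall strategy (Perron's method plus comparison for existence and uniqueness, invariance under integer translations, a subadditivity/Fekete argument, comparison for monotonicity in $L$) is in the same spirit as the paper, which follows \cite{mp}. But there is a genuine gap at the step you yourself identify as decisive: the uniform-in-$\tau$ control of the oscillation of $w(\tau,\cdot)$ in $y$. The discrete invariance you invoke only compares values at points differing by vectors $k\in\Z^N$: since $\sigma(\tau,\cdot)$ is merely $\Z^N$-periodic and $W'(w+p\cdot y)$ is only invariant under shifts compensated by integers, $w(\tau,\cdot+z)$ plus a constant is \emph{not} a solution for non-integer $z$, so comparison gives $|w(\tau,y+k)-w(\tau,y)|\le C$ for $k\in\Z^N$ but says nothing about the oscillation of $w(\tau,\cdot)$ inside a periodicity cell. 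Consequently your Fekete argument only produces the growth rates of $\sup_y w(m,\cdot)$ and $\inf_y w(m,\cdot)$ (and even the almost-subadditivity requires the time shift to be an integer, using the $1$-periodicity of $\sigma$ in $\tau$ and adding an integer constant via the periodicity of $W'$); to conclude that these two rates coincide and that $w(\tau,y)/\tau$ converges to a $y$-independent constant, locally uniformly, one needs the spatial oscillation $\sup_y w(\tau,\cdot)-\inf_y w(\tau,\cdot)$ to be at least sublinear in $\tau$, which is exactly the missing ingredient. In the paper this control does not come from translation invariance alone: Theorem \ref{ergodic} is obtained by solving the $(N+1)$-dimensional problem \eqref{wlip}, where comparison yields $1$-periodicity in $y_{N+1}$ and the monotonicity bound $\p_{y_{N+1}}U\ge-1$, together with interior H\"older estimates in $y$, uniform in time (the analogue for $s\in(0,1)$ of Proposition 4.7 in \cite{mp}); Proposition \ref{ergodic2} then gives the stronger statement $|U-\lam\tau|\le C$, and $w(\tau,y)=U(\tau,y,0)$ inherits $|w-\lam\tau|\le C$, from which the locally uniform convergence is immediate. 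Your continuity argument for $\overline{H}(p,L)$ leans on the same unproven equicontinuity, whereas the paper gets it from Proposition \ref{Hprop} (Proposition 5.4 in \cite{mp}).

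Two smaller points. First, the invariance you use involves adding the \emph{constant} $p\cdot k-n$ to $w$, so there is no need to claim that $\I$ annihilates affine functions; indeed for $s\le\frac12$ the action of $\I$ on an affine function is not absolutely convergent, so this justification should be avoided. Second, your ``main obstacle'' paragraph misattributes the difficulty: the issue for $s<\frac12$ in this theorem is not that the tails of $\I$ fight the periodic potential (the term $W'$ is a bounded forcing, not a dissipative term), but simply that the within-cell regularity estimates must be established for the anisotropic operator of any order $s\in(0,1)$; once they are, the comparison/periodicity machinery is identical for all $s$.
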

Once  the cell problem is solved, we can prove the following convergence results:

\begin{thm}[Convergence for $s>\frac{1}{2}$]\label{convergence}Assume (H1)-(H4). The solution
$u^\ep$ of \eqref{ueps>1/2} converges towards the solution $u^0$ of
\eqref{ueffetts>1/2} locally uniformly in $(t,x)$, where $$\overline{H}_1(p):=\overline{H}(p,0)$$ and $\overline{H}(p,L)$
is defined in Theorem \ref{ergodic}.
\end{thm}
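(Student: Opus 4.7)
The plan is to apply the method of half-relaxed limits of Barles--Perthame combined with Evans's perturbed test function technique, following the strategy of \cite{mp} and adapted to the non-local parabolic setting at order $s>1/2$.

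First I would observe that $\{u^\epsilon\}$ is locally uniformly bounded on $[0,T]\times\R^N$: since $W'$ and $\sigma$ are bounded and $u_0\in W^{2,\infty}(\R^N)$, the functions $u_0(x)\pm Ct$ are sub- and supersolutions of \eqref{ueps>1/2} for a suitable constant $C$, and the comparison principle for \eqref{ueps>1/2} yields an $L^\infty$ bound independent of $\epsilon$. Define the half-relaxed limits
$$\overline{u}(t,x):=\limsup_{\substack{\epsilon\to 0\\ (t',x')\to(t,x)}} u^\epsilon(t',x'),\qquad \underline{u}(t,x):=\liminf_{\substack{\epsilon\to 0\\ (t',x')\to(t,x)}} u^\epsilon(t',x'),$$
whose initial traces agree with $u_0$ by the same barrier argument. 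The proof then reduces to showing that $\overline{u}$ is a viscosity subsolution and $\underline{u}$ a viscosity supersolution of \eqref{ueffetts>1/2}: continuity of $\overline{H}_1$ (inherited from Theorem \ref{ergodic}) and the comparison principle for the first-order Hamilton--Jacobi equation $\p_t u=\overline{H}_1(\nabla_x u)$ then force $\overline{u}\le u^0\le\underline{u}$, whence the locally uniform convergence.

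The central step is the perturbed test function argument for the subsolution property of $\overline{u}$; the supersolution statement for $\underline{u}$ is symmetric. Let $\phi\in C^2$ touch $\overline{u}$ strictly from above at a point $(t_0,x_0)$, and suppose by contradiction that $\p_t\phi(t_0,x_0)>\overline{H}_1(p)$ with $p:=\nabla_x\phi(t_0,x_0)$. By Theorem \ref{ergodic} with parameters $(p,0)$, let $w$ be the bounded viscosity solution of \eqref{w}, so that $w-\lam\tau$ is bounded with $\lam=\overline{H}_1(p)=\overline{H}(p,0)$. Consider the perturbed test function
$$\phi^\epsilon(t,x):=\phi(t,x)+\epsilon\bigl(w(t/\epsilon,x/\epsilon)-\lam\, t/\epsilon\bigr).$$
Inserting $\phi^\epsilon$ into the left-hand side of \eqref{ueps>1/2}, substituting the equation \eqref{w} for $w$ with $L=0$, and using the $1$-periodicity of $W'$ together with the Taylor expansion of $\phi$ at $(t_0,x_0)$, one is led formally to
$$\p_t\phi^\epsilon-\epsilon^{2s-1}\I[\phi^\epsilon(t,\cdot)]+W'(\phi^\epsilon/\epsilon)-\sigma(t/\epsilon,x/\epsilon)\;\ge\;\p_t\phi(t_0,x_0)-\overline{H}_1(p)-\epsilon^{2s-1}\I[\phi(t_0,\cdot)](x_0)+o_\epsilon(1).$$
Because $s>1/2$, the prefactor $\epsilon^{2s-1}\to 0$, so the right-hand side is strictly positive for $\epsilon$ sufficiently small, making $\phi^\epsilon$ a strict supersolution of \eqref{ueps>1/2} near $(t_0,x_0)$. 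Since $\phi$ touches $\overline{u}$ strictly from above and $\phi^\epsilon-\phi=O(\epsilon)$, $u^\epsilon-\phi^\epsilon$ attains a local maximum at some $(t_\epsilon,x_\epsilon)\to(t_0,x_0)$, and the viscosity subsolution property of $u^\epsilon$ at this contact point contradicts the strict supersolution inequality just derived.

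The main technical obstacle is to make this rigorous when $w$ is merely a bounded viscosity solution, not regular enough to be inserted into the non-local operator $\I$. The standard remedy is a doubling-of-variables argument: the viscosity test function condition on $u^\epsilon-\phi^\epsilon$ at $(t_\epsilon,x_\epsilon)$ is transferred to a viscosity test function condition on $w$ at the corresponding microscopic point $(t_\epsilon/\epsilon,x_\epsilon/\epsilon)$, where the definition of viscosity solution of \eqref{w} can be invoked. The non-local integral $\I[\phi^\epsilon]$ must be split into a near-field part (handled by the Taylor expansion of $\phi$, with the discrepancy between the macroscopic argument $\phi/\epsilon$ and the microscopic one $p\cdot y+\lam\tau$ absorbed modulo integers by the $1$-periodicity of $W'$) and a far-field part (bounded by $\|w-\lam\tau\|_\infty$ times the prefactor $\epsilon^{2s-1}$, which is the precise point where $s>1/2$ is essential). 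Tracking the coupling between macroscopic and microscopic scales through the non-local operator, and verifying that all cross-scale error terms are $o_\epsilon(1)$, is the delicate aspect of the proof.
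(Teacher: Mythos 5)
Your overall framework (half-relaxed limits, comparison for the effective equation, a perturbed test function built from the corrector, and the observation that the prefactor $\ep^{2s-1}$ kills the macroscopic non-local term when $s>\tfrac12$) matches the paper in spirit, but the core of your argument has a genuine gap: the perturbed test function $\phi^\ep(t,x)=\phi(t,x)+\ep\bigl(w(t/\ep,x/\ep)-\lam t/\ep\bigr)$, with $w$ the $N$-dimensional corrector of \eqref{w}, cannot handle the term $W'(\phi^\ep/\ep)$. The corrector equation produces $W'(w+p\cdot y)$ (equivalently $W'(v+\lam\tau+p\cdot y)$), whereas plugging your $\phi^\ep$ into \eqref{ueps>1/2} produces $W'\bigl(\tfrac{\phi(t,x)}{\ep}+w(\tau,y)-\lam\tau\bigr)$. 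On a cylinder of fixed radius $r$ around $(t_0,x_0)$ (and the comparison step requires $r$ independent of $\ep$), the discrepancy $\tfrac{1}{\ep}\bigl(\phi(t,x)-\phi(t_0,x_0)-\lam(t-t_0)-p\cdot(x-x_0)\bigr)$ is of size $r^2/\ep\to\infty$, and the constant $\phi(t_0,x_0)/\ep$ is not an integer; the $1$-periodicity of $W'$ only lets you shift by an integer $k_\ep$, which absorbs a constant up to an $O(\ep)$ error but not this growing, $(t,x)$-dependent discrepancy. This is precisely the known obstruction for Hamiltonians that are periodic in $u/\ep$ rather than in $x/\ep$, and it is why the paper does not work in $\R^N$: it embeds the problem into $\R^{N+1}$ (problem \eqref{Ueps>1/2}), uses a corrector $V(\tau,y,y_{N+1})$ of the higher-dimensional cell problem \eqref{V}, and evaluates it in the ``$x_{N+1}$-twisted'' test function \eqref{phiep} at $y_{N+1}=F(t,X)/\ep$ with $F=\phi-p\cdot x-\lam t$, so that the extra variable of the corrector exactly records the discrepancy you are trying to absorb.

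A second, related gap is your treatment of the corrector's regularity. You propose to fix the lack of smoothness of $w$ by ``doubling of variables'', but in the correct (higher-dimensional) formulation the error terms that must be controlled involve $\p_{y_{N+1}}V$ and the mismatch between $\I$ computed at the macroscopic and microscopic scales; since $\I[V(\tau,\cdot,y_{N+1})]$ acts only in $y$, no regularity in $y_{N+1}$ is available for the exact corrector, and the paper has to construct approximate sub- and supercorrectors $V^\pm_\eta$ (Proposition \ref{apprcorrectors}): solutions of the $\eta$-penalized problem \eqref{wlip}, Lipschitz in $y_{N+1}$, mollified in $y_{N+1}$ to gain $C^2$, with levels $\lam^\pm_\eta\to\lam$ and a margin $L_\eta\to L_1>0$ (obtained from the monotonicity and coercivity of $\overline H$ in $L$, Proposition \ref{Hprop}) that absorbs the $o_\eta(1)$ and $o_r(1)$ errors in Lemma \ref{converglem}. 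In your proposal there is no extra variable at all, so these quantitative bounds have no counterpart, and a generic doubling-of-variables argument does not supply them. To repair the proof you would essentially have to reproduce the paper's route: pass to the $(N+1)$-dimensional problems \eqref{Ueps>1/2}, \eqref{Ueffetts>1/2} (with the link Lemmata \ref{linkuepUep} and \ref{linkuU}), use the smooth approximate correctors, and run the perturbed test function argument with the twisted ansatz.
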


\begin{thm}[Convergence for $s<\frac{1}{2}$]\label{convergences<1/2}Assume (H1)-(H4). The solution
$u^\ep$ of \eqref{ueps<1/2} converges towards the solution $u^0$ of
\eqref{ueffetts<1/2} locally uniformly in $(t,x)$, where $$\overline{H}_2(L):=\overline{H}(0,L)$$ and $\overline{H}(p,L)$
is defined in Theorem \ref{ergodic}.
\end{thm}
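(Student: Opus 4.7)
The plan is to apply the perturbed test function method of Evans together with the half-relaxed limits technique.

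First, I would establish uniform $L^\infty$ and Lipschitz bounds for the family $\{u^\ep\}$. The $L^\infty$ bound comes from comparison with the barriers $u_0(x) \pm Ct$, with $C$ controlling $\|W'\|_\infty$ and $\|\s\|_\infty$. For spatial regularity I rely on a discrete invariance: by (H2)--(H3), for every $(k, m) \in \Z^N \times \Z$ the shift $(t, x) \mapsto u^\ep(t, x + \ep k) + \ep^{2s} m$ is again a solution of \eqref{ueps<1/2}, and comparison against such shifts yields a spatial Lipschitz estimate up to an error $O(\ep^{2s})$; time regularity follows analogously. Hence the half-relaxed limits
$$\overline{u}(t, x) := \limsup_{(t', x', \ep) \to (t, x, 0)} u^\ep(t', x'), \qquad \underline{u}(t, x) := \liminf_{(t', x', \ep) \to (t, x, 0)} u^\ep(t', x')$$
are bounded, satisfy $\underline{u} \le \overline{u}$, and attain $u_0$ at $t = 0$ via standard initial barriers.

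The heart of the argument is to show that $\overline{u}$ is a viscosity subsolution of \eqref{ueffetts<1/2}; the supersolution property for $\underline{u}$ is symmetric. Let $\phi \in C^2$ strictly touch $\overline{u}$ from above at $(t_0, x_0)$ and set $L_0 := \I[\phi(t_0, \cdot)](x_0)$. I build the perturbed test function
$$\phi^\ep(t, x) := \phi(t, x) + \ep^{2s}\, v\!\left(\frac{t}{\ep^{2s}}, \frac{x - x_0}{\ep}\right),$$
where $v$ is the bounded corrector furnished by Theorem \ref{ergodic} for the cell problem \eqref{v} with parameters $p_\ep := \ep^{1-2s}\nabla \phi(t_0, x_0)$, $L = L_0$ and $\lam = \overline{H}(p_\ep, L_0)$. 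The key observation is that for $s < 1/2$ one has $\ep^{1-2s} \to 0$, so $p_\ep \to 0$, and the continuity of $\overline{H}$ forces $\overline{H}(p_\ep, L_0) \to \overline{H}(0, L_0) = \overline{H}_2(L_0)$. This is the scaling mechanism by which the gradient dependence in the effective Hamiltonian disappears.

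Plugging $\phi^\ep$ into the viscosity subsolution inequality at a contact point $(t_\ep, x_\ep) \to (t_0, x_0)$ of $u^\ep - \phi^\ep$, and Taylor-expanding $\phi/\ep^{2s}$ about $(t_0, x_0)$ in the microscopic variables $\tau = t/\ep^{2s}$ and $y = (x - x_0)/\ep$, the linear part of $\phi/\ep^{2s}$ matches precisely the combination $\overline{H}(p_\ep, L_0)\tau + p_\ep \cdot y$ appearing inside $W'$ in \eqref{v}, modulo a constant integer killed by the $1$-periodicity of $W$, an error $o(1)$ from higher-order Taylor terms (controlled because $|x_\ep - x_0|^2/\ep^{2s} \to 0$), and a defect between $\I[\phi(t_\ep,\cdot)](x_\ep)$ and $L_0$ that vanishes by continuity of $\I$ on the smooth $\phi$. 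The surviving inequality is $\partial_t \phi(t_0, x_0) \le \overline{H}_2(L_0)$. The analogous bound from below for $\underline{u}$ gives the supersolution property, and a comparison principle for the non-local limit equation \eqref{ueffetts<1/2} (valid since $\overline{H}_2$ is continuous and non-decreasing) together with $\underline{u} \le \overline{u}$ yields $\underline{u} = \overline{u} = u^0$, hence locally uniform convergence.

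The main obstacle is the perturbed test function computation: one must simultaneously handle (i) the diverging phase $\phi(t_0, x_0)/\ep^{2s}$ (its integer part is eliminated by periodicity of $W$, while the fractional part is absorbed by selecting, along a subsequence, a suitably translated corrector that shares the same ergodic constant by uniqueness in Theorem \ref{ergodic}), (ii) the Taylor remainder uniformly in the microscopic variables where the non-local operator $\I$ acts, and (iii) the fact that the contact points $(t_\ep, x_\ep)$ can a priori wander in the microscopic scale, so the cell problem corrector must be used only through its bounded part $v$ rather than through $w$ itself. These steps parallel the strategy developed in \cite{mp, mp2}, the crucial new input being the vanishing of $p_\ep$ characteristic of the regime $s < 1/2$.
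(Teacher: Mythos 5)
Your overall scaffolding (half-relaxed limits, perturbed test function, comparison for the limit equation) is the right family of techniques, but the way you treat the oscillating phase inside $W'$ contains a genuine gap, and it is precisely the point for which the paper introduces its main technical machinery. In your construction the argument of $W'$ at the microscale is $\phi(t,x)/\ep^{2s}+v$, and you replace $\phi(t,x)/\ep^{2s}$ by its first-order Taylor polynomial $\phi(t_0,x_0)/\ep^{2s}+\lam\tau+p_\ep\cdot y$ so as to match the cell problem \eqref{v}. The quadratic remainder is of size $|(t,x)-(t_0,x_0)|^2/\ep^{2s}$, and this must be small on the whole cylinder $Q_{r,r}(t_0,x_0)$ on which you prove the supersolution property and run the comparison with $u^\ep$: there it is of order $r^2/\ep^{2s}\rightarrow+\infty$ for fixed $r$ as $\ep\rightarrow 0$, so the equation for $\phi^\ep$ is violated by an $O(1)$ amount (the error enters through $W'$, which is Lipschitz and periodic, so it is bounded but not $o_r(1)+o_\ep(1)$). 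Your claim that the error is ``controlled because $|x_\ep-x_0|^2/\ep^{2s}\to0$'' has no justification: contact points converge with no rate, and in the standard scheme one must first fix $r$, then send $\ep\to 0$ (the threshold $\ep_0$ depends on $r$ through the strict-maximum gap $\gamma_r$), so one cannot shrink the cylinder with $\ep$ to save the Taylor expansion without destroying the exterior comparison $u^\ep\leq\phi^\ep$. This is exactly why the paper does not linearize the phase at all: it embeds the problem in $\R^{N+1}$ and uses the ``$x_{N+1}$-twisted'' test function, feeding the exact phase $F(t,X)/\ep^{2s}$ (with $F=\phi-\lam^+_\eta t$) into an additional variable $y_{N+1}$ of the corrector of the higher-dimensional cell problem \eqref{V} with $p=0$, so that the arguments of $W'$ match identically up to the integer $k_\ep$. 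The price is that the corrector must be regular in $y_{N+1}$, which the exact corrector is not (since $\I$ acts only in $y$); this is the role of the smooth approximate sub/supercorrectors $V^{\pm}_\eta$ and the adjusted constants $\lam^{\pm}_\eta$, $L_\eta$ of Proposition \ref{apprcorrectors}, none of which appear in your proposal but which are indispensable to control the nonlocal error terms in the analogue of Lemma \ref{converglem}.

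Two further, more minor, discrepancies: the mechanism by which the gradient disappears in the paper is not your $p_\ep=\ep^{1-2s}\nabla\phi(t_0,x_0)\to 0$ plus continuity of $\overline{H}$ (which would in any case require uniform-in-$p$ control of $\ep$-dependent correctors that you do not establish); the paper takes $p=0$ in the cell problem outright, with $\nabla\phi$ entering only through the phase variable, and the contribution of this dependence is shown to be small for $s<\frac{1}{2}$ by the kernel estimates. Also, $L_0$ should be evaluated as in \eqref{l0}, with the test function near $x_0$ and the half-relaxed limit $U^+$ in the far field, rather than as $\I[\phi(t_0,\cdot)](x_0)$ for a global $\phi$. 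Your handling of the fractional part of $\phi(t_0,x_0)/\ep^{2s}$ by phase-translating the corrector is repairable (the ergodic constant is invariant under such translations), but it does not address the quadratic-remainder obstruction above, which is where the proof as proposed breaks down.
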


We point out that the effective Hamiltonians $\overline{H}_1$ and $\overline{H}_2$ represent the speed of propagation of the dislocation dynamics according to 
\eqref{ueffetts>1/2} and \eqref{ueffetts<1/2}. In particular, due to Theorems \ref{convergence} and \ref{convergences<1/2}, such speed only depends on the slope of the dislocation 
in the weakly non-local setting   $s>\frac{1}{2}$ and only on an operator of order $s$ of the dislocation in the strongly non-local setting $s<\frac{1}{2}$.

We will next  consider the case: $N=1$,  $\I=-(-\Delta)^s$  and $\sigma\equiv 0$, and we will make the further following assumptions on the potential $W$:
\begin{equation}\label{W}
\begin{cases}W\in C^{4,\beta}(\R)& \text{for some }0<\beta<1\\
W(v+1)=W(v)& \text{for any } v\in\R\\
W=0& \text{on }\Z\\
W>0 & \text{on }\R\setminus\Z\\
\al=W''(0)>0\\
W\text{ is even  if }s\in\left(0,\frac{1}{2}\right).\\
\end{cases}
\end{equation}
Under assumption (\ref{W}), it is  known, see  \cite{cs} and \cite{psv},
that there exists a unique function $\phi$ solution of
\begin{equation}\label{phi}
\begin{cases}\I[\phi]=W'(\phi)&\text{in}\quad \R\\
\phi'>0&\text{in}\quad \R\\
\LIM_{x\rightarrow-\infty}\phi(x)=0,\quad\LIM_{x\rightarrow+\infty}\phi(x)=1,\quad\phi(0)=\frac{1}{2}.
\end{cases}
\end{equation}
Then we can prove the following extension of the Orowan's law:
\begin{thm}\label{hullprop}Assume \eqref{W} 
and let $p_0,\,L_0\in\R$ with $p_0\neq 0$. Then the function $\overline{H}$ defined in Theorem \ref{ergodic} satisfies
\begin{equation}\label{orowan}\frac{\overline{H}(\delta p_0,\delta^{2s} L_0)}{\delta^{1+2s}}\rightarrow
c_0|p_0|L_0\quad\text{as }\delta\rightarrow0^+ \quad \mbox{with}\quad c_0=\left(\int_\R (\phi')^2\right)^{-1}.\end{equation}
\end{thm}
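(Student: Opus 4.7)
The plan is to use the ergodic characterization from Theorem \ref{ergodic}: $\overline{H}(\delta p_0,\delta^{2s}L_0)$ is the unique number $\lam_\delta$ such that $w_\delta(\tau,y)/\tau\to\lam_\delta$ as $\tau\to\infty$, where $w_\delta$ denotes the solution of \eqref{w} with $p=\delta p_0$ and $L=\delta^{2s}L_0$. By the comparison principle for \eqref{w} (the same tool that underpins Theorem \ref{ergodic}), it is enough to construct sub- and supersolutions of \eqref{w} whose ergodic slopes are $c_0|p_0|L_0\delta^{1+2s}+o(\delta^{1+2s})$ as $\delta\to0^+$.

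By the $\I$-invariance $y\mapsto-y$ (coming from the evenness of $g$ assumed in (H1)), it is enough to treat the case $p_0>0$. Motivated by the crystal dislocation interpretation, I would use the ansatz of a train of layer solutions of \eqref{phi}, with density $\delta p_0$ per unit length, all translating at a common velocity $c(\delta):=-c_0\delta^{2s}L_0$; equivalently, passing to $v:=w+\delta p_0 y$ (which solves $\p_\tau v=\I[v]+\delta^{2s}L_0-W'(v)$), the approximate solution is of the form
\beqs
v_\delta(\tau,y):=\SUM_{k\in\Z}\bigl[\phi\bigl(y-k/(\delta p_0)-c(\delta)\tau\bigr)-H\bigl(y-k/(\delta p_0)\bigr)\bigr]+C_\delta(\tau),
\eeqs
with $H$ the Heaviside function (inserted to render the sum absolutely convergent, in view of the tail decay $|\phi(x)-H(x)|\lesssim|x|^{-2s}$ from \cite{psv,dpv}) and $C_\delta(\tau):=c_0 p_0 L_0\delta^{1+2s}\tau$. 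Strict sub-/supersolutions $w_\delta^\pm$ of \eqref{w} are then obtained by adding suitable correctors and constant shifts to $v_\delta-\delta p_0 y$.

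Plugging the ansatz into the equation for $v$, and using the layer equation $\I[\phi(\cdot-\xi)]=W'(\phi(\cdot-\xi))$ together with the fact that $\I$ annihilates affine functions, the residual splits into (i) a drift term $-c(\delta)\sum_k \phi'\bigl(y-k/(\delta p_0)-c(\delta)\tau\bigr)$, (ii) the applied stress $\delta^{2s}L_0$, (iii) the shift derivative $C_\delta'(\tau)$, and (iv) a coupling term arising from the nonlinearity $W'$ acting on the superposition. The crucial solvability condition is obtained by testing against $\phi'$: using $\int_\R\phi'=1$ and $\int_\R(\phi')^2=c_0^{-1}$, the pointwise balance fixes $|c(\delta)|=c_0\delta^{2s}L_0$. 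Since each passing dislocation contributes $+1$ to $v$ at a fixed point and the density is $\delta p_0$, the resulting average rate of increase of $v$ in $\tau$ is $\delta p_0\cdot|c(\delta)|=c_0 p_0 L_0\delta^{1+2s}$, which is precisely the constant $C_\delta'(\tau)$ selected in the ansatz.

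The main obstacle is to handle the long-range interactions between the translates of $\phi$. The tail decay $|\phi(x)-H(x)|\lesssim|x|^{-2s}$ renders the sum $\sum_{k\neq 0}|k/(\delta p_0)|^{-2s}\cdot\delta p_0\sim\delta^{2s}$ only marginally small, and in the regime $s<1/2$ the evenness of $W$ (required in \eqref{W}) must be exploited to extract the necessary cancellation in the coupling term (iv). These interaction errors then have to be absorbed into the correctors defining $w_\delta^\pm$, so that the resulting objects are genuine strict sub/supersolutions of \eqref{w} with ergodic slopes $\lam_\delta^\pm=c_0 p_0 L_0\delta^{1+2s}+o(\delta^{1+2s})$. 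The comparison principle finally yields $\lam_\delta^-\le\overline{H}(\delta p_0,\delta^{2s}L_0)\le\lam_\delta^+$, and \eqref{orowan} follows upon letting $\delta\to0^+$.
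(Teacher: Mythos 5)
Your overall strategy is the same as the paper's: approximate the corrector of \eqref{w} (equivalently of \eqref{sv}) by a traveling train of layer solutions with spacing $1/(\delta|p_0|)$ and speed $c_0\delta^{2s}L_0$, pin the speed down through the solvability (Fredholm) condition obtained by testing the linearized equation against $\phi'$, and sandwich $\overline{H}(\delta p_0,\delta^{2s}L_0)$ between the ergodic slopes of strict sub/supersolutions via the comparison principle and the periodicity of $W$ (the paper does exactly this with the hull functions $h_\delta^{L_0\mp\eta}$ and Proposition \ref{hproperties}). However, there are two genuine gaps. First, the ansatz as displayed is inconsistent: with the Heaviside functions pinned at the initial positions $k/(\delta p_0)$, the sum $\sum_k\bigl[\phi\bigl(y-k/(\delta p_0)-c(\delta)\tau\bigr)-H\bigl(y-k/(\delta p_0)\bigr)\bigr]$ already grows in $\tau$ at rate $\delta p_0|c(\delta)|=c_0p_0L_0\delta^{1+2s}$ (each crossing contributes $+1$), so adding $C_\delta(\tau)=c_0p_0L_0\delta^{1+2s}\tau$ double-counts the drift; moreover this sum is bounded in $y$, so $v_\delta$ is missing the linear profile $\delta p_0y$ that the true $v$ carries (it starts from $\delta p_0y$ and satisfies \eqref{uergodic}), and the initial-time comparison needed to start the sandwich fails. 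The correct object is, as in the paper, $h_\delta^L(\delta p_0y+\overline{\lambda}_\delta^L\tau)$ with $|h_\delta^L(x)-x|\le C$: in your notation you should add $\delta p_0y$ and drop $C_\delta$.

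Second, and more importantly, the error terms you propose to ``absorb into the correctors'' are of size $O(\delta^{2s})$, i.e.\ of the very same order as the applied stress $\delta^{2s}L_0$; absorbing them is precisely where the constant $c_0$ is produced, and you never construct the corrector that does it. In the paper this is the function $\psi$ of \eqref{psi}, the solution of the equation linearized around a single layer with right-hand side $\frac{L_0}{W''(0)}(W''(\phi)-W''(0))+c\phi'$, which is solvable exactly when $c=L_0\left(\int_\R(\phi')^2\right)^{-1}$; one then needs its decay estimates (Lemma \ref{psiinfinitylem}), the summation estimates of Claims 1--6, and, for $s<\frac12$, the additional devices of the cutoff $\tau_R$ in \eqref{tauproper} with $R=\frac{1}{2\delta|p_0|}$ (the $\psi$-tails are not summable there) together with the oddness of $\phi-\frac12$ and $W'''(0)=0$ coming from the evenness of $W$, to prove that the residual is $o(\delta^{2s})$ uniformly in $x$ (Lemma \ref{lem::1011}). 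Your write-up correctly flags the borderline decay and the role of evenness for $s<\frac12$, but without constructing $\psi$ (and the cutoff) the claimed strict sub/supersolutions with slopes $c_0p_0L_0\delta^{1+2s}+o(\delta^{1+2s})$ are asserted rather than proved; once they are available, your final comparison step (run in the paper at the shifted level $L=L_0\mp\eta$, shifting by integer constants via the periodicity of $W$) is indeed all that remains.
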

We notice that \eqref{orowan} can be rephrased using the notation $p:=\delta p_0$ and $L:=\delta L_0$, by saying
\begin{equation*}{\mbox{$\displaystyle \overline{H}(p,L)=c_0|p| L$ + higher order terms, }}\end{equation*}
which in particular shows that $\overline{H}$ has a linear growth close to the origin.
We observe that assumption (\ref{W}) is stronger than (H2), since it requires the minima to be non-degenerate, it assumes further smoothness on the potential 
and the even property in the case $s<\frac{1}{2}$. This last property is natural for physical applications, since typically the effect of a dislocation 
in a given direction compensates with the one in the opposite direction (in particular it is satisfied in the classical Peierls-Nabarro model in which $W(u)=1-\cos(2\pi u)$).
From the technical point of view, this property is needed only in the strongly non-local case $s<\frac{1}{2}$ since the first order asymptotic decay of the layer solution \eqref{phi}
lies below a critical threshold (the even property allows us to deduce a useful second order approximation).

The rest of the paper is organized as follows.
First we recall some definitions and basic fact about viscosity
solutions. Then, in Section~\ref{ansatzsec}
we imbed our problem into one in one dimension more,
to keep track of all the homogenized quantities,
and we state the ansatz on the solution we look for.
The corrector equation will be studied in Section~\ref{CO},
where Theorem \ref{ergodic} will be proved.
Thus, we will prove
Theorems~\ref{convergence} and~\ref{convergences<1/2}
in Sections~\ref{convprofs>1/2sec}
and~\ref{yuio9}, respectively. Then we present the extension of
the Orowan's law and the
proof of Theorem \ref{hullprop} in Section~\ref{OR}.

\subsection{Notations and definition of viscosity solution} We denote by $B_r(x)$ the ball of radius
$r$ centered at $x$. The cylinder $(t-\tau,t+\tau)\times B_r(x)$
is denoted by $Q_{\tau,r}(t,x)$.

$\lfloor x \rfloor$ and $\lceil x\rceil$ denote respectively the
floor and the ceiling
integer part functions of a real number $x$.

It is convenient to introduce the singular measure defined on
$\R^N\setminus\{0\}$ by
$$\mu(dz)=\frac{1}{|z|^{N+2s}}g\left(\frac{z}{|z|}\right)dz,$$and
to denote
$$\I^{1,r}[\varphi,x]=\int_{|z|\leq r}(\varphi(x+z)-\varphi(x)-\nabla \varphi(x)\cdot
z)\mu(dz),$$
$$\I^{2,r}[\varphi,x]=\int_{|z|>r}(\varphi(x+z)-\varphi(x))\mu(dz).$$

For a function $u$ defined on $(0,T)\times\R^N$, $0<T\leq+\infty$,
for $0<\al<1$ we denote by $<u>_x^\al$  the seminorm defined by
$$<u>_x^\al:=\sup_{(t,x''),\,(t,x')\in(0,T)\times\R^N\atop x''\neq x'}\frac{|u(t,x'')-u(t,x')|}{|x''-x'|^\al}$$
and by $C_x^{\al}((0,T)\times\R^N)$ the space of continuous
functions defined on $(0,T)\times\R^N$ that are bounded and with
bounded seminorm
 $<u>_x^\al$.

Finally, we denote by $USC_b(\R^+\times\R^N)$ (resp.,
$LSC_b(\R^+\times\R^N)$) the set of upper (resp., lower)
semicontinuous functions on $\R^+\times\R^N$ which are bounded on
$(0,T)\times\R^N$ for any $T>0$ and we set
$C_b(\R^+\times\R^N):=USC_b(\R^+\times\R^N)\cap
LSC_b(\R^+\times\R^N)$.

Let us conclude by recalling  the definition of viscosity solution for a general
first order non-local equation with associated initial
condition:
\begin{equation}\label{generalpb}
\begin{cases}
u_t=F(t,x,u,Du,\I[u])&\text{in}\quad \R^+\times\R^N\\
u(0,x)=u_0(x)& \text{on}\quad \R^N,
\end{cases}
\end{equation}where $F(t,x,u,p,L)$ is continuous and
non-decreasing in $L$. The definition relies on the following observation:
if $\varphi$ is a bounded $C^2$ function, then for any $r>0$
\beqs\begin{split}\I[\varphi,x]&=\int_{|z|\leq r}(\varphi(x+z)-\varphi(x)-\nabla \varphi(x)\cdot
z)\mu(dz)+\int_{|z|>r}(\varphi(x+z)-\varphi(x))\mu(dz)
\\&=\I^{1,r}[\varphi,x]+\I^{2,r}[\varphi,x]\end{split}\eeqs and this expression is independent of $r$ because of the antisymmetry 
of $\nabla \varphi(x)\cdot
z\mu(dz)$. 
\begin{de}[viscosity solution]\label{defviscosity}A function $u\in USC_b(\R^+\times\R^N)$ (resp., $u\in LSC_b(\R^+\times\R^N)$) is a
viscosity subsolution (resp., supersolution) of
\eqref{generalpb} if $u(0,x)\leq (u_0)^*(x)$ (resp., $u(0,x)\geq
(u_0)_*(x)$) and for any $(t_0,x_0)\in\R^+\times\R^N$, any
$\tau\in(0,t_0)$ and any test function $\varphi\in
C^2(\R^+\times\R^N)$ such that $u-\varphi$ attains a local maximum
(resp., minimum) at the point $(t_0,x_0)$ on
$Q_{(\tau,r)}(t_0,x_0)$, then we have
\beqs\begin{split}&\p_t\varphi(t_0,x_0)-F(t_0,x_0,u(t_0,x_0),\nabla_x\varphi(t_0,x_0),\I^{1,r}[\varphi(t_0,\cdot),x_0]+\I^{2,r}[u(t_0,\cdot),x_0])\leq
0\\&\text{(resp., }\geq 0),\end{split}\eeqs for a positive number $r$.  A function $u\in
C_b(\R^+\times\R^N)$ is a viscosity solution of
\eqref{generalpb} if it is a viscosity sub and supersolution
of \eqref{generalpb}.
\end{de}
One can prove that Definition \ref{defviscosity} does not depend on $r$ and if the  inequality above is satisfied for a given $r>0$, then it is satisfied for any $r>0$, see \cite{mp} and the references therein.


\section{The Ansatz}\label{ansatzsec}
As explained in \cite{mp}, because of the presence of the term $W'\left(\frac{u^\epsilon}{\epsilon}\right)$ in \eqref{ueps>1/2} and  \eqref{ueps<1/2}, in order to get the homogenization results, we need to imbed our problems into higher dimensional ones. Let us first assume $s>\frac{1}{2}$. Then  we will consider: 
\begin{equation}\label{Ueps>1/2}
\begin{cases}
\p_{t}
U^\epsilon=\ep^{2s-1}\I[U^\epsilon(t,\cdot,x_{N+1})]-W'\left(\frac{U^\epsilon}{\epsilon}\right)+\s
\left(\frac{t}{\epsilon},\frac{x}{\epsilon}\right)&\text{in}\quad \R^+\times\R^{N+1}\\
U^\epsilon(0,x,x_{N+1})=u_0(x)+x_{N+1}& \text{on}\quad \R^{N+1}
\end{cases}
\end{equation}
and we will prove that $U^\ep$ converges as $\ep\rightarrow 0$ to the function
$$U^0 (t,x,x_{N+1})=u^0(t,x)+ x_{N+1}$$  with $u^0$ the solution of \eqref{ueffetts>1/2}. We remark that $U^0 $
 satisfies:
  \begin{equation}\label{Ueffetts>1/2}
\begin{cases}
\p_{t} U=\overline{H}_1(\nabla_x U)
&\text{in}\quad \R^+\times\R^{N+1}\\
U(0,x,x_{N+1})=u_0(x)+x_{N+1}& \text{on}\quad \R^{N+1}.
\end{cases}
\end{equation}
 The convergence of $U^\ep$ to $U^0$ will imply the converge of $u^\ep$ to $u^0$. 
 In order to prove this result, 
we  introduce the higher dimensional cell problem: for $P=(p,1)\in\R^{N+1}$ and $L\in\R$:
 \begin{equation}\label{V}\left\{
  \begin{array}{ll}
    \lam+\p_{\tau} V=L+\I[V(\tau,\cdot,y_{N+1})]-W'(V+P\cdot Y+\lam\tau)+\s(\tau,y) & \hbox{in } \R^+\times\R^{N+1}\\
    V(0,Y)=0 & \hbox{on }\R^{N+1}.
  \end{array}
\right.\end{equation} 
Here we use the notation $Y=(y,y_{N+1})$.
The right Ansatz for $U^\ep$ solution of \eqref{Ueps>1/2}, turns out to be
 \beq\label{tildeUep} U^\ep(t,x,x_{N+1})\simeq \tilde{U}^\ep(t,x,x_{N+1}):=U^0(t,x,x_{N+1})+\ep
V\left(\frac{t}{\ep},\frac{x}{\ep},\frac{U^0(t,x,x_{N+1})-\lam
t-p\cdot x}{\ep  }\right)\eeq
with $V$ the bounded solution of   \eqref{V},  for suitable values of $p$ and $L$.   
Let us verify it. 

Fix $P_0=(t_0,x_0,x_{N+1}^0)\in\R^+\times\R^{N+1}$ and let $\tilde{U}^\ep(t,x,x_{N+1})$ be defined as in \eqref{tildeUep}.
Let us denote 
 \beq\label{lamplans}\lam=\p_t U^0(P_0),\quad p=\nabla_x U^0(P_0),\eeq
 and
  $$F(t,x,x_{N+1})=U^0(t,x,x_{N+1})-\lam
t-p\cdot x,\quad  \tau=\frac{t}{\ep},\,y=\frac{x}{\ep},\quad y_{N+1}=\frac{F(t,x,x_{N+1})}{\ep }.$$
We remark that $P=(p,1)=\nabla_{(x,x_{N+1}) }U^0(P_0)$ and 
$$\frac{\tilde{U}^\ep(t,x,x_{N+1})}{\ep}=V(\tau,y,y_{N+1})+\lam\tau+p\cdot y+ y_{N+1}=V(\tau,Y)+P\cdot Y+\lam\tau.$$
Here we assume for simplicity that $U^0$ and $V$ are smooth. 
The proof of convergence consists in showing that  $\tilde{U}^\ep$ is a solution of \eqref{Ueps>1/2} in a cylinder  $(t_0-r,t_0+r)\times B_r(x_0,x_{N+1}^0)$ for $r>0$ small enough, up to an error that goes to 0 as $r\rightarrow 0^+$.  This will allow us to compare $U^\epsilon$ with $\tilde{U}^\ep$ and, thanks to the boundedness of $V$, to conclude that $U^\epsilon$ converges to $U^0$ as $\ep\rightarrow 0$.  

When we plug $\tilde{U}^\ep$ into \eqref{Ueps>1/2},  we find the equation 
\beqs \begin{split} \lam + \p_{\tau} V(\tau,Y)&=\ep^{2s-1}\I[U^0(t,\cdot,x_{N+1}),x]+\I[V(\tau,\cdot,y_{N+1}),y]\\&-W'(V+PY+\lam\tau)+\s(\tau,y)+\theta_r,
\end{split}\eeqs
where 
\beqs   \begin{split} \theta_r &= (\p_tU^0(P_0)-\p_t U^0(t,x,x_{N+1}))(\p_{y_{N+1}}V(\tau,Y)+1)
\\&+\ep^{2s}\I\left[V\left(\frac{t}{\ep},\frac{\cdot}{\ep}, \frac{F(t,\cdot, x_{N+1})}{\ep}\right),x\right]-\I[V(\tau,\cdot,y_{N+1}),y].\end{split} \eeqs
If $V$ is solution of \eqref{V} with $p$  as in \eqref{lamplans} and $L=0$, and $U^0$ satisfies $\p_t U^0(P_0)=\lam=\overline{H}(\nabla_x U^0(P_0),0)$, 
then     $\tilde{U}^\ep$ will be a solution of  \eqref{Ueps>1/2} up to  small errors $\ep^{2s-1}\I[U^0(t,\cdot,x_{N+1}),x]=o_\ep(1)$ as $\ep\rightarrow 0$ and  $\theta_r=o_r(1)$ as $r\rightarrow0^+$.  As we will see in Section \ref{convprofs>1/2sec}, 
 this last property holds true if the corrector $V$  satisfies: 
$|V|$, $|\p_{y_{N+1}}V|\leq C$ in $\R^+\times\R^{N+1}$ for some $C>0$, and 
\begin{equation*}
\p_{y_{N+1}}V(\tau,\cdot,\cdot) \quad \mbox{is H\"{o}lder continuous, uniformly in time.}
\end{equation*}
Since in \eqref{V} the quantity $\I[V(\tau,\cdot,y_{N+1})]$ is computed only in the $y$ variable, we cannot expect this kind of regularity
 for the correctors.
       Nevertheless, following \cite{mp}, we are able to construct regular
 approximated sub and supercorrectors, i.e., sub and supersolutions of approximate $N+1$-dimensional cell problems, and this is enough to conclude. 
 
 Similarly for   $s<\frac{1}{2}$, we will consider:
 \begin{equation}\label{Ueps<1/2}
\begin{cases}
\p_{t}
U^\epsilon=\I[U^\epsilon(t,\cdot,x_{N+1})]-W'\left(\frac{U^\epsilon}{\epsilon^{2s}}\right)+\s
\left(\frac{t}{\epsilon^{2s}},\frac{x}{\epsilon}\right)&\text{in}\quad \R^+\times\R^{N+1}\\
U^\epsilon(0,x,x_{N+1})=u_0(x)+x_{N+1}& \text{on}\quad \R^{N+1},
\end{cases}
\end{equation}
 and we will show that $U^\ep$ converges as $\ep\rightarrow 0$ to the function
$$U^0 (t,x,x_{N+1})=u^0(t,x)+ x_{N+1}$$  with $u^0$ the solution of \eqref{ueffetts<1/2}. Here $U^0$ is solution of 
\begin{equation}\label{Ueffetts<1/2}
\begin{cases}
\p_{t} U=\overline{H}_2(\I[U(t,\cdot,x_{N+1})])
&\text{in}\quad \R^+\times\R^{N+1}\\
U(0,x,x_{N+1})=u_0(x)+x_{N+1}& \text{on}\quad \R^{N+1}.
\end{cases}
\end{equation}
In this case,  the right Ansatz  turns out to be
\beqs U^\ep(t,x,x_{N+1})\simeq U^0(t,x,x_{N+1})+\ep^{2s}
V\left(\frac{t}{\ep^{2s} },\frac{x}{\ep},\frac{U^0(t,x,x_{N+1})-\lam t}{\ep^{2s}  }\right)\eeqs 
where  $V$ is   the bounded solution of  \eqref{V} for $p=0$ and $L=\I[U^0(t,\cdot,x_{N+1}),x].$
 
\section{Correctors}\label{CO}

In this section we prove Theorem \ref{ergodic} and the existence of smooth approximated sub and supersolutions of the higher dimensional cell problem \eqref{V} introduced in Section \ref{ansatzsec} which are needed to show the convergence Theorems \ref{convergence} and  \ref{convergences<1/2}.
The proof of these results is  given in  \cite{mp} for the case $s=1$ and it is essentially based on the comparison principle and invariance under integer translations. 
Therefore it can be easily extended to the case $s\in(0,1)$ and  for this reason, here we only give a sketch of it. \\

\noindent{\bf Step 1: Lipschitz correctors.}

One introduces the problem: for $\eta\geq0$, $a_0,\,L\in\R$,
$p\in\R^N$ and $P=(p,1)$
\begin{equation}\label{wlip}\left\{
  \begin{array}{ll}
    \p_{\tau} U=L+\I[U(\tau,\cdot,y_{N+1})]-W'(U+P\cdot Y)+\s(\tau,y)\\
    \quad\quad\,+\eta[a_0+\inf_{Y'}U(\tau,Y')-U(\tau,Y)]|\p_{y_{N+1}}U+1|
    & \hbox{in } \R^+\times\R^{N+1}\\
    U(0,Y)=0 & \hbox{on }\R^{N+1},
  \end{array}
\right.\end{equation}
and show the existence of the viscosity solution $U_\eta\in C_b(\R^+\times\R^{N+1})$. When $\eta>0$ this solution turns out to be Lipschitz continuous
 in the variable $y_{N+1}$ with 
 \beqs-1\leq \p_{y_{N+1}}U_\eta(\tau,Y)\leq
\frac{\|W''\|_\infty}{\eta}.\eeqs\\
See the proof of Propositions 6.2, 6.3 and 6.4 in \cite{mp} for  details about the existence and regularity of the solution of \eqref{wlip}.
As we will explain in Step 5, choosing conveniently  the number $a_0$ in   \eqref{wlip}, we obtain sub and supersolutions of the $N+1$-dimensional cell problem \eqref{V} which are Lipschitz continuous in $y_{N+1}$.\\

\noindent{\bf Step 2: Ergodicity.}

Using the comparison principle, and the periodicity of $\sigma$ and $W$, one can prove the following ergodic result:
\begin{prop}[Ergodic properties]\label{ergodic2}
There exists a unique $\lam_\eta=\lam_\eta(p,L)$ such that the
viscosity solution $U_\eta\in C_b(\R^+\times\R^{N+1})$  of
\eqref{wlip}  with $\eta\geq 0$, satisfies:
\begin{equation}\label{w-lam}|U_\eta(\tau,Y)-\lam_\eta\tau|\leq
C\text{ for all }\tau>0,\,Y\in \R^{N+1},\end{equation}with $C$
independent of $\eta$.
Moreover
\begin{equation}\label{lambounds}L-\|W'\|_\infty-\|\s\|_\infty+\eta a_0\leq\lam_\eta\leq
L+\|W'\|_\infty+\|\s\|_\infty+\eta a_0.\end{equation}
\end{prop}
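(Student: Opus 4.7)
I would follow the strategy of \cite{mp}: the key ingredients are the comparison principle for \eqref{wlip} (established in Step 1) together with the invariance of the equation under integer translations in $\tau$ and in $Y = (y, y_{N+1})$ (modulo the tilt induced by $p$) that respect the periodicity of $W$ and of $\s$.

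First I would verify the explicit linear bounds in \eqref{lambounds}. The constant-in-$Y$ affine functions $\overline{v}(\tau) := \overline{\lam}\, \tau$ and $\underline{v}(\tau) := \underline{\lam}\, \tau$, with $\overline{\lam} := L + \|W'\|_\infty + \|\s\|_\infty + \eta a_0$ and $\underline{\lam} := L - \|W'\|_\infty - \|\s\|_\infty + \eta a_0$, satisfy $\I[\overline{v}] = 0$, $\inf_{Y'}\overline{v} - \overline{v} = 0$, and $|\p_{y_{N+1}} \overline{v} + 1| = 1$, so the right-hand side of \eqref{wlip} reduces to $L - W'(\cdot) + \s(\tau, y) + \eta a_0$, which lies in $[\underline{\lam}, \overline{\lam}]$. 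Comparison with the zero initial datum yields $\underline{\lam}\, \tau \leq U_\eta(\tau, Y) \leq \overline{\lam}\, \tau$, whence \eqref{lambounds} after dividing by $\tau$ and letting $\tau \to +\infty$.

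Next I would establish a tilted-lattice invariance for $U_\eta$. For any $(k, m) \in \Z^N \times \Z$, the function $\widetilde{U}(\tau, Y) := U_\eta(\tau, y + k, y_{N+1} + m - p \cdot k)$ is a bounded viscosity solution of \eqref{wlip} with the same zero initial condition: the operator $\I$ is translation invariant in $y$; since $P \cdot (Y + (k, m - p\cdot k)) = P \cdot Y + m$ with $m \in \Z$, the term $W'$ is unchanged thanks to (H2); $\s(\tau, y + k) = \s(\tau, y)$ by (H3); and the nonlocal $\eta$-term is preserved since $\inf_{Y'}$ is translation invariant. Uniqueness (Step 1) forces $\widetilde{U} = U_\eta$. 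In particular, taking $k = 0$ and $m = 1$, $U_\eta$ is $1$-periodic in $y_{N+1}$; combined with the $\eta$-uniform one-sided bound $\p_{y_{N+1}} U_\eta \geq -1$ from Step 1, this yields $|U_\eta(\tau, y, y_{N+1}) - U_\eta(\tau, y, y_{N+1}')| \leq 1$. The full tilted periodicity then transfers $y$-oscillation at integer steps into $y_{N+1}$-oscillation, and a continuity argument on a compact fundamental domain of the lattice produces
\[
\mathrm{osc}_{Y \in \R^{N+1}} U_\eta(\tau, \cdot) \leq C_0
\]
with $C_0$ independent of $\tau$ and $\eta$.

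Finally I would extract the ergodic constant by subadditivity. By (H3) the equation is invariant under the shift $\tau \mapsto \tau + \tau_0$ for any $\tau_0 \in \N$, and by (H2) the shifts $U \mapsto U + k$ with $k \in \Z$ also leave it invariant. Comparing $U_\eta(\tau + \tau_0, Y)$ with $U_\eta(\tau, Y) + k$ for $k = \lfloor m(\tau_0) \rfloor$ and $k = \lceil M(\tau_0) \rceil$, where $m(\tau) := \inf_Y U_\eta(\tau, \cdot)$ and $M(\tau) := \sup_Y U_\eta(\tau, \cdot)$, gives the almost-additivity
\[
m(\tau) + m(\tau_0) - 1 \leq m(\tau + \tau_0) \leq m(\tau) + m(\tau_0) + C_0 + 1.
\]
Fekete's lemma, together with the linear bounds from the first step, then yields $m(\tau)/\tau \to \lam_\eta \in [\underline{\lam}, \overline{\lam}]$ with $|m(\tau) - \lam_\eta \tau|$ uniformly bounded; combined with the $Y$-oscillation bound this proves \eqref{w-lam} with $C$ independent of $\eta$, and uniqueness of $\lam_\eta$ is immediate from its definition as a limit. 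The main technical point is obtaining the oscillation bound $C_0$ \emph{uniformly in $\eta$}, since the upper Lipschitz estimate on $\p_{y_{N+1}} U_\eta$ from Step 1 degenerates as $\eta \to 0^+$; this is circumvented by relying solely on the $\eta$-uniform one-sided bound $\p_{y_{N+1}} U_\eta \geq -1$ together with the tilted-lattice periodicity established above.
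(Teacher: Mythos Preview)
Your overall architecture is the right one and matches the scheme of \cite{mp} (and the paper's own one--line referral to Proposition~6.4 there): affine barriers for \eqref{lambounds}, tilted--lattice invariance of $U_\eta$, and an almost--additivity/Fekete argument for $m(\tau)=\inf_Y U_\eta(\tau,\cdot)$. The barrier computation and the invariance $U_\eta(\tau,y+k,y_{N+1}+m-p\cdot k)=U_\eta(\tau,Y)$ are correct, as is the deduction of the $y_{N+1}$--oscillation bound $\leq 1$ from the $\eta$--uniform one--sided estimate $\p_{y_{N+1}}U_\eta\geq -1$ together with $1$--periodicity.

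There is, however, a genuine gap in the oscillation bound. Your tilted periodicity reduces the $y$--oscillation at \emph{integer} steps to a $y_{N+1}$--shift and hence to something $\leq 1$; but for non--integer $y$ the phrase ``a continuity argument on a compact fundamental domain'' does not, by itself, produce a bound that is \emph{uniform in $\tau$}. Continuity of $U_\eta(\tau,\cdot)$ on the compact torus $\R^{N+1}/\Lambda$ only gives a finite oscillation for each fixed $\tau$. What is needed is a $\tau$--uniform (and $\eta$--uniform) modulus of continuity of $U_\eta$ in the $y$--variable. In \cite{mp} this is supplied by the H\"older regularizing effect of the nonlocal diffusion $\I$ (Proposition~4.7 there, which the present paper also invokes in Step~5): $U_\eta(\tau,\cdot,y_{N+1})\in C_y^{\alpha}$ with a seminorm independent of $\tau$, $y_{N+1}$, and $\eta$. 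Combined with your $y_{N+1}$--oscillation bound and the tilted $\Z^{N+1}$--periodicity, this yields the uniform $C_0$. Without it, you only get the superadditive half $m(\tau+\tau_0)\geq m(\tau)+m(\tau_0)-1$, and Fekete then gives $m(\tau)/\tau\to\lam_\eta$ but \emph{not} the uniform estimate $|m(\tau)-\lam_\eta\tau|\leq C$ required for \eqref{w-lam}; the upper half of your almost--additivity, $m(\tau+\tau_0)\leq m(\tau)+m(\tau_0)+C_0+1$, already presupposes the oscillation bound $M(\tau_0)-m(\tau_0)\leq C_0$.

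A secondary point: the bound $\p_{y_{N+1}}U_\eta\geq -1$ is recorded in Step~1 only for $\eta>0$. For $\eta=0$ you should either derive it directly by the same comparison, or---more in keeping with the uniformity statement---obtain the $\eta=0$ case of the proposition by passing to the limit $\eta\to 0^+$ via stability of viscosity solutions, once $C$ is known to be independent of $\eta>0$.
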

Proposition \ref{ergodic2} can be proved like Proposition 6.4 in \cite{mp}.\\

\noindent{\bf Step 3: Proof of Theorem \ref{ergodic}.}

Let $U$ be the solution of \eqref{wlip}  with $\eta=0$, then 
the function $$w(\tau,y):=U(\tau,y,0)$$ is the solution of \eqref{w} and by Proposition \ref{ergodic2}, there exists  a unique  $\lam$ such that 
\beq\label{wergodic}|w(\tau,y)-\lambda\tau|\leq C.\eeq This property implies that $\lam$ is the unique number such that 
$w(\tau,y)/\tau$ converges towards $\lambda$ as $\tau\rightarrow +\infty$, and Theorem \ref{ergodic} is proved.

The next two steps are only needed in the proof of Theorems \ref{convergence} and  \ref{convergences<1/2}. We first state some properties of the effective Hamiltonian, then 
in Step 5, we construct approximate sub and supersolutions of \eqref{V} which are smooth also in the additional variable $y_{N+1}$. This further
 regularity property is needed to control the error when we compare the solution $U^\ep$ of \eqref{Ueps>1/2} and \eqref{Ueps<1/2} with the corresponding ansatz, as explained 
 in Section \ref{ansatzsec}.\\

\noindent{\bf Step 4: Properties of the effective Hamiltonian}

We have 
\begin{prop}[Properties of the effective Hamiltonian]\label{Hprop} Let $p\in\R^N$ and $L\in\R$. Let $\overline{H}(p,L)$ be the constant
defined by Theorem \ref{ergodic}, then
$\overline{H}:\R^N\times\R\rightarrow\R$ is a continuous function
with the following properties:
\begin{itemize}
\item[(i)]$\overline{H}(p,L)\rightarrow {\pm}\infty$ as $L \rightarrow
{\pm}\infty$ for any $p\in\R^N$;
\item[(ii)] $\overline{H}(p,\cdot)$ is non-decreasing on $\R$ for any $p\in\R^N$;
\item[(iii)]If $\s(\tau,y)=\s(\tau,-y)$ then
$$\overline{H}(p,L)=\overline{H}(-p,L);$$
\item[(iv)]If $W'(-s)=-W'(s)$ and $\s(\tau,-y)=-\s(\tau,y)$ then
$$\overline{H}(p,-L)=-\overline{H}(p,L).$$
\end{itemize}
\end{prop}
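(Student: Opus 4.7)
The plan is to handle each of the four items separately, leaning on Proposition~\ref{ergodic2} for (i), on the comparison principle for viscosity solutions of \eqref{w} for (ii), and on symmetry reductions combined with the uniqueness part of Theorem~\ref{ergodic} for (iii) and (iv).

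For (i), I would simply specialize \eqref{lambounds} to $\eta=0$. This gives
\begin{equation*}
L-\|W'\|_\infty-\|\s\|_\infty\le \overline{H}(p,L)\le L+\|W'\|_\infty+\|\s\|_\infty
\end{equation*}
uniformly in $p$, from which $\overline{H}(p,L)\to\pm\infty$ as $L\to\pm\infty$ is immediate. For the continuity claim (which is restated here but in fact already asserted in Theorem~\ref{ergodic}), I would invoke stability of viscosity solutions of \eqref{w} under perturbations of the parameters $(p,L)$, together with the uniform a priori bound \eqref{wergodic}, and deduce that $\overline{H}$ depends continuously on $(p,L)$.

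For (ii), fix $L_1\le L_2$ and the same $p\in\R^N$, and let $w_1,w_2$ be the bounded viscosity solutions of \eqref{w} provided by Theorem~\ref{ergodic}. Since $L_1\le L_2$ and the other terms in \eqref{w} coincide, $w_1$ is a subsolution of the equation with source $L_2$, and $w_1(0,\cdot)=w_2(0,\cdot)=0$. The comparison principle for nonlocal parabolic equations then yields $w_1\le w_2$ on $\R^+\times\R^N$. Dividing by $\tau$ and letting $\tau\to+\infty$ gives $\overline{H}(p,L_1)\le\overline{H}(p,L_2)$.

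For (iii) and (iv) I would exploit the evenness of $g$, which makes $\I$ invariant under $y\mapsto -y$. In case (iii), with $\s(\tau,y)=\s(\tau,-y)$, the function $\tilde w(\tau,y):=w(\tau,-y)$ built from the solution $w$ of \eqref{w} for $(p,L)$ satisfies \eqref{w} with the pair $(-p,L)$ and zero initial datum; uniqueness of the ergodic constant then forces $\overline{H}(-p,L)=\overline{H}(p,L)$. In case (iv), with $W'$ odd and $\s(\tau,-y)=-\s(\tau,y)$, the function $\tilde w(\tau,y):=-w(\tau,-y)$ is checked, via the same change of variables in $\I$ together with $W'(-s)=-W'(s)$ and the oddness of $\s$, to solve \eqref{w} for the pair $(p,-L)$; dividing by $\tau$ gives $\overline{H}(p,-L)=-\overline{H}(p,L)$.

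The only genuinely nontrivial ingredient is the comparison principle used in (ii), which for the nonlocal parabolic equation \eqref{w} with a merely Lipschitz $W'$ is not formally stated in the excerpt but is by now classical and is precisely the tool invoked in \cite{mp} for $s=1$; its extension to $s\in(0,1)$ is routine via the standard doubling-of-variables argument with the decomposition $\I=\I^{1,r}+\I^{2,r}$ recalled before Definition~\ref{defviscosity}. The remaining steps are either direct consequences of the bounds already established in Proposition~\ref{ergodic2} or straightforward symmetry manipulations.
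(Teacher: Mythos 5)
Your proof is correct and follows essentially the same route as the paper, which simply defers to Proposition 5.4 of \cite{mp}: item (i) from the $\eta=0$ case of the bound \eqref{lambounds}, (ii) from the comparison principle applied to the two bounded solutions of \eqref{w}, and (iii)--(iv) from the reflection/negation invariances of $\I$ (using that $g$ is even) combined with uniqueness of the bounded solution of \eqref{w} and of the ergodic constant. The only point treated more lightly than it deserves is continuity in $p$ (changing $p$ perturbs $W'(w+p\cdot y)$ by an amount that is not small uniformly in $y$, so plain stability is not quite enough), but since continuity is already asserted in Theorem \ref{ergodic} this does not affect the items (i)--(iv) that the proposition actually adds.
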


For the proof of Proposition \ref{Hprop} see Proposition 5.4 in \cite{mp}.\\

\noindent{\bf Step 5: Construction of smooth approximate sub and supercorrectors.}

The ergodic property \eqref{wlip} of $U_\eta$ implies that there exists $C_1>0$ such that 
$$C_1+\inf_{Y'}U_\eta(\tau,Y')-U_\eta(\tau,Y)>0,$$ for any $\eta>0$.
Then, one take  $U^+_\eta$ to be  the solution of \eqref{wlip} with $a_0=C_1$ and 
$U^-_\eta$ to be  the solution of \eqref{wlip} with $a_0=0$. We remark that $U^+_\eta$ and $U^-_\eta$ are respectively super and subsolution of 
$$  \p_{\tau} U=L+\I[U(\tau,\cdot,y_{N+1})]-W'(U+P\cdot Y)+\s(\tau,y).$$
Let 
$\lam_\eta^+=\LIM_{\tau\rightarrow+\infty}\frac{U^+_\eta(\tau,Y)}{\tau}$
and
$\lam_\eta^-=\LIM_{\tau\rightarrow+\infty}\frac{U^-_\eta(\tau,Y)}{\tau}$, whose 
the existence  is guaranteed by
Proposition \ref{ergodic2}.  Stability results and the ergodic property \eqref{w-lam} imply that $\lam^+_\eta,\lam^-_\eta\rightarrow\lam$ as $\eta\rightarrow0$, with $\lam$ given by Theorem \ref{ergodic}. 

Next, one  set 
$$W^+_\eta(\tau,Y):=U^+_\eta(\tau,Y)-\lam^+_\eta\tau$$
and
$$W^-_\eta(\tau,Y):=U^-_\eta(\tau,Y)-\lam^-_\eta\tau.$$
Then $W^+_\eta$ and $W^-_\eta$ are respectively super and subsolution of \eqref{V} with respectively $\lam=\lam^+_\eta$ and $\lam=\lam^-_\eta$, and are Lipschitz continuous in the variable $y_{N+1}$. One can in addition show that 
these functions are of class $C^{\al}$ with respect to $y$ uniformly  in $y_{N+1}$, for $0<\al<\min\{1,2s\}$. This comes from Proposition 4.7 in \cite{mp} that can be easily adapted to the case $s\in(0,1)$. 

The regularity properties of $W^+_\eta$ and $W^-_\eta$ are not enough in order to prove the convergence results, Theorems \ref{convergence} and \ref{convergences<1/2}, as pointed out in Section \ref{ansatzsec}.  Therefore, one introduces a  positive smooth function
$\rho:\R\rightarrow\R$, with support in $B_1(0)$ and mass 1 and 
defines a sequence of mollifiers $(\rho_\delta)_\delta$ by
$\rho_\delta(r)=\frac{1}{\delta}\rho\left(\frac{r}{\delta}\right)$,
$r\in\R.$ 
Then, one finally defines
\beqs
V^{\pm}_{\eta,\delta}(t,y,y_{N+1}):=W^{\pm}_\eta(t,y,\cdot)\star\rho_\delta(\cdot)
=\int_{\R}W^{\pm}_\eta(t,y,z)\rho_\delta(y_{N+1}-z)dz.\eeqs
Choosing properly $\delta=\delta(\eta)$, one can prove the following result:
\begin{prop}[Smooth approximate correctors]\label{apprcorrectors}
Let $\lam$ be the constant defined by Theorem \ref{ergodic}. For
any fixed $p\in\R^N$, $P=(p,1)$, $L\in\R$ and $\eta>0$ small
enough, there exist real numbers $\lam^+_\eta(p,L)$,
$\lam^-_\eta(p,L)$, a constant $C>0$ (independent of $\eta,\,p$
and $L$) and bounded super and subcorrectors $V^+_{\eta},
V^-_{\eta}$, i.e. respectively a super and a subsolution of
\begin{equation}\label{apprcorrequ}\left\{
  \begin{array}{ll}
    \lam^{\pm}_\eta+\p_{\tau}
    V^{\pm}_\eta=L+\I[V^{\pm}_\eta(\tau,\cdot,y_{N+1})]\\
    \qquad\qquad\qquad-W'(V^{\pm}_\eta+P\cdot Y+\lam^{\pm}_\eta\tau)+\s(\tau,y){\mp} o_\eta(1) & \hbox{in } \R^+\times\R^{N+1}\\
    V^{\pm}_\eta(0,Y)=0 & \hbox{on }\R^{N+1},
  \end{array}
\right.\end{equation}  where $0\leq o_\eta(1)\rightarrow0$ as
$\eta\rightarrow0^+$, such that \beq\label{appcorr1}
\LIM_{\eta\rightarrow0^+}\lam^+_\eta(p,L)=\LIM_{\eta\rightarrow0^+}\lam^-_\eta(p,L)=\lam(p,L),\eeq
locally uniformly in $(p,L)$, $\lam^{\pm}_\eta$ satisfy (i) and
(ii) of Proposition \ref{Hprop} and for any
$(\tau,Y)\in\R^+\times\R^{N+1}$
\beq\label{appcorr2}|V^{\pm}_{\eta}(\tau,Y)|\leq C.\eeq
 Moreover $V^{\pm}_{\eta}$ are of class $C^{2}$ w.r.t. $y_{N+1}$, and for any $0<\alpha<\min\{1,2s\}$ 
\beq\label{appcorr3}
-1\leq\p_{y_{N+1}}V^{\pm}_\eta \leq
\frac{\|W''\|_\infty}{\eta},\eeq 
\begin{equation}\label{contrderivappcorr}
\|\p^2_{y_{N+1}y_{N+1}}V^{\pm}_\eta\|_\infty\leq C_{\eta},\quad <\p_{y_{N+1}}V^{\pm}_\eta>_y^\al,\,\le C_{\eta,\alpha}.
\end{equation}
\end{prop}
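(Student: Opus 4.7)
The main task is to verify the approximate cell problem \eqref{apprcorrequ} for the convolved functions $V^\pm_{\eta,\delta}$ already introduced in the excerpt, for a suitable choice $\delta = \delta(\eta)$. The surrounding claims---existence of $\lam^\pm_\eta$, the convergence $\lam^\pm_\eta \to \lam$ in \eqref{appcorr1}, monotonicity and the asymptotic behavior in $L$ for $\lam^\pm_\eta$, and the $L^\infty$ bound \eqref{appcorr2}---are essentially already recorded in Steps 1--2: they follow from Proposition~\ref{ergodic2} applied to \eqref{wlip} (with $a_0 = C_1$ and $a_0 = 0$ respectively), the uniform-in-$\eta$ ergodic bound \eqref{w-lam} together with the standard stability for viscosity solutions, and the estimate \eqref{lambounds}, which shows $|\lam^\pm_\eta - \lam|$ is controlled by $|\eta a_0|$; the analogues of (i)--(ii) in Proposition~\ref{Hprop} for $\lam^\pm_\eta$ are obtained by repeating the same comparison-principle arguments for the equation \eqref{wlip}.

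\textbf{The cell-problem inequality.} I would mollify the supersolution property of $W^+_\eta$ (respectively, the subsolution property of $W^-_\eta$) in the $y_{N+1}$ variable. Since $\I$ acts only on $y$ and $\p_\tau$ is linear, both commute with $\star\rho_\delta$; writing the supersolution inequality for $W^+_\eta$ at the shifted point $(\tau, y, y_{N+1}-w)$ and integrating against $\rho_\delta(w)\,dw$ turns every linear term into its convolution. The only nontrivial term is the nonlinear one, which becomes
$$
\int_\R W'\!\bigl( W^+_\eta(\tau, y, y_{N+1}-w) + p\cdot y + (y_{N+1}-w) + \lam^+_\eta \tau \bigr)\, \rho_\delta(w)\, dw,
$$
and must be compared with the target $W'(V^+_{\eta,\delta} + P\cdot Y + \lam^+_\eta \tau)$.

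\textbf{The key estimate.} Setting $G(w) := W^+_\eta(\tau, y, y_{N+1}-w) - w$, the Lipschitz bound from Step~1 gives $|G'(w)| \le 1 + \|W''\|_\infty/\eta$. Because $\rho_\delta$ has mass $1$ and (taking $\rho$ even, as we may) mean zero, the target argument differs from the integrand's argument by $G(w) - \int G\, \rho_\delta$, which is $O(\delta/\eta)$. Combined with the Lipschitz continuity of $W'$ (from $W \in C^{1,1}$), this yields a total error of order $\delta/\eta$; choosing e.g.\ $\delta(\eta) := \eta^2$ makes it $o_\eta(1)$, giving \eqref{apprcorrequ}. The subsolution case is symmetric.

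\textbf{Regularity and main obstacle.} Smoothness in $y_{N+1}$ is built-in to the convolution: Young's inequality yields $\|\p^2_{y_{N+1}y_{N+1}} V^\pm_{\eta,\delta}\|_\infty \le (\|W''\|_\infty/\eta)\, \|\rho'\|_{L^1}/\delta =: C_\eta$, while the H\"older seminorm in $y$ of $\p_{y_{N+1}} V^\pm_{\eta,\delta}$ is inherited from the uniform-in-$y_{N+1}$ $C^\alpha_y$ regularity of $W^\pm_\eta$ noted in Step~5 (adapting Proposition~4.7 of \cite{mp}); the pointwise bounds \eqref{appcorr3} transfer from $\p_{y_{N+1}} W^\pm_\eta$ to $\p_{y_{N+1}} V^\pm_{\eta,\delta}$ by convolving. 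The central difficulty is precisely the interplay between $\eta$ and $\delta$: the $y_{N+1}$-Lipschitz constant of $W^\pm_\eta$ blows up like $1/\eta$, forcing $\delta$ to go to zero strictly faster than $\eta$ in order to absorb the nonlinear error, while the resulting $C^2$ bound is then allowed to depend on $\eta$ in the statement.
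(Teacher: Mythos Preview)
Your proposal is correct and follows essentially the same route as the paper: construct the Lipschitz correctors $W^\pm_\eta$ from \eqref{wlip}, mollify in $y_{N+1}$, and choose $\delta=\delta(\eta)\to 0$ fast enough (your choice $\delta=\eta^2$ works) so that the commutator between $W'$ and the convolution is $o_\eta(1)$; the paper gives only the construction in Step~5 and refers to \cite{mp} for the verification, while you supply the quantitative error bound $O(\|W''\|_\infty\,\delta/\eta)$ and the regularity bookkeeping explicitly.

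One technical point worth flagging: the sentence ``writing the supersolution inequality for $W^+_\eta$ at the shifted point and integrating against $\rho_\delta(w)\,dw$'' is formally correct but, since $W^+_\eta$ is only a \emph{viscosity} supersolution, one cannot literally integrate the pointwise inequality. The standard remedy (used in \cite{mp}) is to exploit that the equation contains no derivative in $y_{N+1}$: for each fixed $z$, the slice $W^+_\eta(\cdot,\cdot,z)$ is a viscosity supersolution in $(\tau,y)$ of an equation whose only $z$-dependence sits inside the $W'$ term; one then tests $V^+_{\eta,\delta}$ with a smooth $\psi$ from below, passes the test function through the convolution to touch some slice $W^+_\eta(\cdot,\cdot,z^*)$, and uses the Lipschitz bound in $y_{N+1}$ to replace $z^*$ by $y_{N+1}$ at the cost of the same $O(\delta/\eta)$ error you computed. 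This is routine and does not affect your estimates or conclusions.
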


\section{Proof of Theorem \ref{convergence}}\label{convprofs>1/2sec}

To prove  Theorem \ref{convergence}, as explained in Section \ref{ansatzsec}, 
we introduce the higher dimensional problem \eqref{Ueps>1/2} and we prove the convergence of the solution $U^\ep$ to the solution $U^0$  of  \eqref{Ueffetts>1/2}.
Let us first state the following
\begin{prop}\label{existUep}For $\ep>0$ there exists
$U^{\ep}\in C_b(\R^+\times\R^{N+1})$ (unique) viscosity solution
of \eqref{Ueps>1/2}. Moreover, there exists a constant $C>0$
independent of $\ep$ such that
\begin{equation}\label{stimathmexistUeps>1/2}|U^\ep(t,x,x_{N+1})-u_0(x)-x_{N+1}|\leq
Ct.\end{equation}
\end{prop}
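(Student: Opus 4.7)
The plan is to establish \eqref{stimathmexistUeps>1/2} by exhibiting two explicit barriers, and then to invoke Perron's method together with a comparison principle to produce a unique viscosity solution sandwiched between them. Since $u_0(x)+x_{N+1}$ is unbounded, it is cleaner to look for $U^\ep$ of the form $U^\ep=u_0(x)+x_{N+1}+W^\ep$, where $W^\ep$ is bounded. Note that $\I$ acts only in the $x$ variable, so it annihilates $x_{N+1}$; moreover, assumption (H4) together with the splitting $\I[u_0]=\I^{1,1}[u_0,\cdot]+\I^{2,1}[u_0,\cdot]$ (using the $C^2$ estimate in the singular region and the $L^\infty$ estimate outside) yields $\|\I[u_0]\|_\infty\le M$ for some $M=M(u_0,g,N,s)$.

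The barriers are the affine-in-$t$ functions
\begin{equation*}
U^\pm(t,x,x_{N+1}):=u_0(x)+x_{N+1}\pm C\,t,\qquad C:=\ep^{2s-1}M+\|W'\|_\infty+\|\s\|_\infty,
\end{equation*}
which have the correct initial trace. A direct computation shows that $U^+$ (resp.\ $U^-$) is a classical supersolution (resp.\ subsolution) of \eqref{Ueps>1/2}: indeed $\p_t U^\pm=\pm C$, $\I[U^\pm(t,\cdot,x_{N+1})]=\I[u_0]$ is bounded by $M$, while $-W'(U^\pm/\ep)+\s(t/\ep,x/\ep)$ is bounded in absolute value by $\|W'\|_\infty+\|\s\|_\infty$; the choice of $C$ makes the supersolution (resp.\ subsolution) inequality work in either sign. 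In particular, any viscosity solution provided by Perron's method will automatically satisfy $|U^\ep-u_0(x)-x_{N+1}|\le C t$, which is precisely \eqref{stimathmexistUeps>1/2}.

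For existence and uniqueness, I would apply the standard Perron construction in the class of functions $V$ for which $V-u_0(x)-x_{N+1}\in C_b(\R^+\times\R^{N+1})$: setting $W^\ep:=U^\ep-u_0(x)-x_{N+1}$ transforms \eqref{Ueps>1/2} into a bounded-data problem of the general form \eqref{generalpb}, with Hamiltonian monotone in $L$, globally Lipschitz in the dependent variable (as $W\in C^{1,1}$ by (H2)), and Lipschitz in the remaining arguments because of (H3)-(H4). The comparison principle for such non-local parabolic problems is established in \cite{mp} by a doubling-of-variables argument with the splitting $\I=\I^{1,r}+\I^{2,r}$; exactly the same proof applies to our equation. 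Combined with the barriers above, this gives a unique viscosity solution $U^\ep\in u_0(x)+x_{N+1}+C_b$ with the claimed estimate. The only mildly delicate point is the unboundedness in $x_{N+1}$, but the equation's invariance under translations in $x_{N+1}$ (on which neither $\I$ nor $W'$ nor $\s$ depends) makes the standard penalization $\eta(|x|^2+|x_{N+1}|^2+t)$ in the doubling argument go through without modification.
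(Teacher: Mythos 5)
Your proposal is correct and takes essentially the same route the paper relies on: the paper simply invokes Perron's method and the comparison principle from \cite{mp}, and your explicit barriers $u_0(x)+x_{N+1}\pm Ct$ are precisely the standard way the bound \eqref{stimathmexistUeps>1/2} is obtained in that framework. The only point worth making explicit is that, since $s>\frac{1}{2}$, one has $\ep^{2s-1}\leq 1$ for $\ep\leq 1$, so your barrier constant $\ep^{2s-1}\|\I[u_0]\|_\infty+\|W'\|_\infty+\|\s\|_\infty$ is indeed bounded independently of $\ep$ in the relevant regime.
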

Proposition \ref{existUep} as well as the existence of a unique solution of problems  \eqref{ueps>1/2}, \eqref{ueffetts>1/2} and  \eqref{Ueffetts>1/2} is a consequence of the Perron's method and the comparison principle for these equations, see \cite{mp} and references therein. 
Let us exhibit the link between the
problem in $\R^N$ and the problem in $\R^{N+1}$.
\begin{lem}[Link between the problems on $\R^N$ and on $\R^{N+1}$]\label{linkuepUep} If $u^\ep$ and $U^\ep$ denote respectively the solution
of \eqref{ueps>1/2} and \eqref{Ueps>1/2}, then we have
$$\left|U^\ep(t,x,x_{N+1})-u^\ep(t,x)-\ep\left\lfloor\frac{x_{N+1}}{\ep}\right\rfloor\right|\leq
\ep,$$
\beq\label{linklemmdis}U^\ep\left(t,x,x_{N+1}+
\ep\left\lfloor\frac{a}{\ep}\right\rfloor\right)
=U^\ep(t,x,x_{N+1})+\ep\left\lfloor\frac{a}{\ep}\right\rfloor\quad\text{for
any }a\in\R.\eeq
\end{lem}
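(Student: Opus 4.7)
The plan is to deduce both estimates from the uniqueness of solutions and the comparison principle for \eqref{Ueps>1/2}, leveraging the periodicity of $W$ (hypothesis (H2)). I would prove \eqref{linklemmdis} first, since it is shorter, and then use an independent comparison argument for the first bound.

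\textbf{Proof of \eqref{linklemmdis}.} Fix $a\in\R$ and set $k:=\lfloor a/\ep\rfloor\in\Z$. Consider
$$V(t,x,x_{N+1}):=U^\ep(t,x,x_{N+1}+\ep k)-\ep k.$$
The equation \eqref{Ueps>1/2} is translation invariant in $x_{N+1}$, so $U^\ep(t,x,x_{N+1}+\ep k)$ is itself a solution. Subtracting the constant $\ep k$ does not affect $\p_t$, $\I$ (which acts only in $x$), or $\s(t/\ep,x/\ep)$, and it turns $W'(U^\ep/\ep)$ into $W'(U^\ep/\ep-k)=W'(U^\ep/\ep)$ by the periodicity of $W$. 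The initial datum of $V$ is $u_0(x)+x_{N+1}+\ep k-\ep k=u_0(x)+x_{N+1}$, which coincides with that of $U^\ep$. Uniqueness for \eqref{Ueps>1/2} (Proposition~\ref{existUep}) yields $V=U^\ep$, and this is exactly \eqref{linklemmdis}.

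\textbf{Proof of the first inequality.} I would introduce the two functions
$$V_1(t,x,x_{N+1}):=u^\ep(t,x)+\ep\lfloor x_{N+1}/\ep\rfloor,\qquad V_2(t,x,x_{N+1}):=u^\ep(t,x)+\ep\lceil x_{N+1}/\ep\rceil,$$
which are respectively upper and lower semicontinuous. The core claim is that $V_1$ is a viscosity subsolution and $V_2$ a viscosity supersolution of \eqref{Ueps>1/2}. Granting this, the initial data satisfy $V_1(0,x,x_{N+1})\leq u_0(x)+x_{N+1}\leq V_2(0,x,x_{N+1})$ because $\ep\lfloor r\rfloor\leq \ep r\leq\ep\lceil r\rceil$, so the comparison principle for \eqref{Ueps>1/2} gives $V_1\leq U^\ep\leq V_2$ on $\R^+\times\R^{N+1}$. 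Since $V_2-V_1=\ep(\lceil x_{N+1}/\ep\rceil-\lfloor x_{N+1}/\ep\rfloor)\leq \ep$, the bound $|U^\ep-u^\ep-\ep\lfloor x_{N+1}/\ep\rfloor|\leq\ep$ follows immediately.

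\textbf{Verifying the sub/supersolution property.} At points $(t_0,x_0,x_{N+1}^0)$ with $x_{N+1}^0\notin\ep\Z$, the function $V_1$ locally coincides with $u^\ep(t,x)+\ep k$ for a fixed $k\in\Z$. This is a solution of \eqref{Ueps>1/2}, again because $\I$ ignores the additive constant $\ep k$ and because $W'(u^\ep/\ep+k)=W'(u^\ep/\ep)$ by periodicity. At a jump point $P_0=(t_0,x_0,\ep k)$, let $\varphi\in C^2$ be a test function with $V_1-\varphi$ attaining a local maximum at $P_0$ on a cylinder $Q_{\tau,r}(P_0)$. Restricting to the slice $\{x_{N+1}=\ep k\}$ and writing $\tilde\varphi(t,x):=\varphi(t,x,\ep k)-\ep k$, one obtains that $u^\ep-\tilde\varphi$ has a local maximum at $(t_0,x_0)$. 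Applying the subsolution property of $u^\ep$ for \eqref{ueps>1/2}, and observing that $\I^{1,r}[\varphi(t_0,\cdot,\ep k),x_0]=\I^{1,r}[\tilde\varphi(t_0,\cdot),x_0]$, $\I^{2,r}[V_1(t_0,\cdot,\ep k),x_0]=\I^{2,r}[u^\ep(t_0,\cdot),x_0]$, and $W'(V_1(P_0)/\ep)=W'(u^\ep(t_0,x_0)/\ep)$, one recovers exactly the required subsolution inequality for $V_1$ at $P_0$. The argument for $V_2$ is symmetric, using test functions from below.

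The main technical obstacle I expect is the careful treatment of the jump set $\{x_{N+1}\in\ep\Z\}$ in the viscosity sense and, in particular, the verification that the definition of viscosity solution in Definition~\ref{defviscosity} passes through the slicing in $x_{N+1}$. The reason it does is that both the nonlocal operator $\I$ and the potential term $W'(\cdot/\ep)$ are invariant, in the appropriate sense, under the addition of an integer multiple of $\ep$; the bookkeeping is routine but must be done at each occurrence of $\I^{1,r}$ and $\I^{2,r}$ in the definition.
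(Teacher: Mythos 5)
Your proposal is correct and follows essentially the same route the paper indicates: \eqref{linklemmdis} via uniqueness combined with invariance under $\ep\Z$-translations in $x_{N+1}$ and the periodicity of $W$, and the first bound by comparing $U^\ep$ with the sub/supersolutions $u^\ep(t,x)+\ep\lfloor x_{N+1}/\ep\rfloor$ and $u^\ep(t,x)+\ep\lceil x_{N+1}/\ep\rceil$. Your slicing verification of the sub/supersolution property is exactly the bookkeeping the paper leaves implicit, so no further changes are needed.
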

This lemma follows from
the comparison principle for
 \eqref{Ueps>1/2} and the invariance by $\ep$-translations w.r.t. $x_{N+1}$.
\begin{lem}\label{linkuU} Let $u^0$ and $U^0$ be respectively the solutions of \eqref{ueffetts>1/2} and \eqref{Ueffetts>1/2}. Then, we have
$$U^0(t,x,x_{N+1})=u^0(t,x)+x_{N+1}.$$
\end{lem}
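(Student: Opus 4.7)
The plan is to show the identity by an explicit construction and a uniqueness argument. Define
\[
V(t,x,x_{N+1}) := u^0(t,x) + x_{N+1},
\]
where $u^0$ is the (unique) viscosity solution of \eqref{ueffetts>1/2}. The goal is to verify that $V$ is a viscosity solution of \eqref{Ueffetts>1/2}, and then conclude by uniqueness (guaranteed, as recalled in Proposition \ref{existUep}, via Perron's method and the comparison principle) that $V \equiv U^0$.

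First I would check the initial datum: at $t=0$,
\[
V(0,x,x_{N+1}) = u^0(0,x) + x_{N+1} = u_0(x) + x_{N+1},
\]
which matches the initial condition in \eqref{Ueffetts>1/2}. Next, I would observe that the Hamiltonian $\overline{H}_1$ in \eqref{Ueffetts>1/2} depends only on $\nabla_x U$, i.e.\ the gradient with respect to the first $N$ variables and not on $\partial_{x_{N+1}} U$. Since $V$ is obtained from $u^0$ by simply adding the linear term $x_{N+1}$, we have $\nabla_x V(t,x,x_{N+1}) = \nabla_x u^0(t,x)$ and $\partial_t V(t,x,x_{N+1}) = \partial_t u^0(t,x)$, so any equation satisfied by $u^0$ in the $(t,x)$ variables is inherited by $V$ in the $(t,x,x_{N+1})$ variables.

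In the viscosity setting, I would make this precise as follows. Let $\varphi \in C^2(\R^+\times \R^{N+1})$ be a test function touching $V$ from above (resp.\ below) at a point $(t_0,x_0,x^0_{N+1})$. Then the function $\psi(t,x) := \varphi(t,x,x^0_{N+1}) - x^0_{N+1}$ is $C^2$ on $\R^+\times\R^N$ and, in a neighborhood of $(t_0,x_0)$, touches $u^0$ from above (resp.\ below) at $(t_0,x_0)$. Because $u^0$ is a viscosity sub/supersolution of \eqref{ueffetts>1/2},
\[
\partial_t \psi(t_0,x_0) \,\lessgtr\, \overline{H}_1(\nabla_x \psi(t_0,x_0)),
\]
and since $\partial_t \psi(t_0,x_0) = \partial_t \varphi(t_0,x_0,x^0_{N+1})$ and $\nabla_x \psi(t_0,x_0) = \nabla_x \varphi(t_0,x_0,x^0_{N+1})$, the same inequality holds for $\varphi$. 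Thus $V$ is a viscosity solution of \eqref{Ueffetts>1/2}.

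By uniqueness for \eqref{Ueffetts>1/2}, we conclude $U^0(t,x,x_{N+1}) = V(t,x,x_{N+1}) = u^0(t,x) + x_{N+1}$. I do not expect any serious obstacle: the only mildly delicate point is the translation of the viscosity sub/supersolution property from the $N$-dimensional to the $(N+1)$-dimensional setting, which is handled by the observation above that $\overline{H}_1$ ignores the extra variable and the extra variable enters $V$ only linearly.
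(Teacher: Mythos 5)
Your argument is correct, and it reaches the conclusion by the same essential tool the paper invokes, namely the comparison (hence uniqueness) principle for \eqref{Ueffetts>1/2}; the slicing computation you perform (testing $V=u^0+x_{N+1}$ with $\varphi$ and restricting to the hyperplane $x_{N+1}=x^0_{N+1}$, using that $\overline{H}_1$ only sees $\nabla_x$) is exactly the point that makes the lift a viscosity solution, and the initial data match, so uniqueness gives $V\equiv U^0$. The paper's one-line justification goes in the opposite direction: it first uses the invariance of \eqref{Ueffetts>1/2} under translations in $x_{N+1}$ together with comparison to get $U^0(t,x,x_{N+1}+a)=U^0(t,x,x_{N+1})+a$ for all $a\in\R$, so that $U^0(t,x,x_{N+1})=U^0(t,x,0)+x_{N+1}$, and then identifies the slice $U^0(\cdot,\cdot,0)$ with $u^0$ by uniqueness for the $N$-dimensional problem \eqref{ueffetts>1/2}. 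Your route trades the translation-invariance step for a direct verification that the lifted function solves the $(N+1)$-dimensional problem; both rest on the same comparison principles (the paper takes these for granted via Proposition \ref{existUep} and \cite{mp}), and yours is, if anything, slightly more self-contained since it needs uniqueness only for \eqref{Ueffetts>1/2}. The only implicit point, at the same level of rigor as the paper, is that $u^0+x_{N+1}$ lies in the class of functions for which the comparison principle for \eqref{Ueffetts>1/2} is applied (it does, since it has the same growth as the prescribed initial datum).
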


Lemma \ref{linkuU} is a consequence of the comparison principle for
\eqref{Ueffetts>1/2} and the invariance by translations w.r.t.
$x_{N+1}$.

Let us proceed with the proof of Theorem \ref{convergence}. In what follows we will use the notation $X=(x,x_{N+1})$. 
By \eqref{stimathmexistUeps>1/2}, we know that the
family of functions $\{U^\ep\}_{\ep>0}$ is locally bounded, then
$$ U^+(t,X):=\limsup_{\ep\rightarrow0}{}^* \ U^\epsilon(t,X):=
\limsup_{{\ep\rightarrow0}\atop{
(t',X')\rightarrow(t,X)
}}\ U^\epsilon(t',X')$$ is everywhere finite, so
it becomes classical
to prove that~$U^+$ is a subsolution of (\ref{Ueffetts>1/2}).

Similarly, we can prove that
$$U^-(t,X)
:={\liminf_{\ep\rightarrow0}} {}_* \ U^\epsilon(t,X):=
\liminf_{{\ep\rightarrow0}\atop{
(t',X')\rightarrow(t,X)
}}\ U^\epsilon(t',X')
$$ is a supersolution
of \eqref{Ueffetts>1/2}. Moreover
$U^+(0,X)=U^-(0,X)=u_0(x)+x_{N+1}$. The comparison
principle for \eqref{Ueffetts>1/2} 
 then  implies that $U^+\leq U^-$. Since the reverse inequality $U^-\leq U^+$ always holds true, we
conclude that the two functions coincide with $U^0$, the unique
viscosity solution of \eqref{Ueffetts>1/2}.

By Lemmata \ref{linkuepUep} and \ref{linkuU}, the convergence of
$U^\ep$ to $U^0$ proves in particular that $u^\ep$ converges
towards $u^0$ viscosity solution of \eqref{ueffetts>1/2}.
\bigskip

To prove that  $U^+$ is a subsolution of \eqref{Ueffetts>1/2}, we argue by contradiction.  We consider a test function $\phi$ such
that $U^+-\phi$ attains a zero maximum at $(t_0,X_0)$ with $t_0>0$
and $X_0=(x_0,x_{N+1}^0)$. Without loss of generality we may
assume that the maximum is strict and global. Suppose that there
exists $\theta>0$ such that
$$\p_t\phi(t_0,X_0)=\overline{H}_1(\nabla_x \phi(t_0,X_0))+\theta.$$
By Proposition \ref{Hprop}, we know that there exists $L_1>0$ (that we take minimal) such
that
$$\overline{H}_1(\nabla_x\phi(t_0,X_0))+\theta=\overline{H}(\nabla_x\phi(t_0,X_0),0)+\theta=\overline{H}(\nabla_x\phi(t_0,X_0),L_1).$$
By Propositions \ref{apprcorrectors} and \ref{Hprop}, we can
consider a sequence $L_\eta\rightarrow L_1$ as
$\eta\rightarrow0^+$, such that
$\lam^+_\eta(\nabla_x\phi(t_0,X_0),L_\eta)=\lam(\nabla_x\phi(t_0,X_0),L_1)$.
We choose $\eta$ so small that $L_\eta-o_\eta(1)\geq L_1/2>0$,
where $o_\eta(1)$ is defined in Proposition \ref{apprcorrectors}.
Let $V^+_{\eta}$ be the approximate supercorrector given by
Proposition \ref{apprcorrectors} with
$$ p=\nabla_x\phi(t_0,X_0),\quad
L=L_\eta$$ and
$$\lam^+_\eta=\lam^+_\eta(p,L_\eta)=\lam(p,L_1)= \p_t\phi(t_0,X_0).$$
For simplicity of notations, in the following we denote
$V=V^+_\eta$. We consider the function $$F(t,X)=\phi(t,X)-p\cdot
x-\lam^+_\eta t,$$ and as in \cite{mp}  we introduce the
``$x_{N+1}$-twisted perturbed test function'' $\phi^\epsilon$
defined by:

\begin{equation}\label{phiep}\phi^\epsilon(t,X):=
\begin{cases}
\phi(t,X)+\epsilon
V\left(\frac{t}{\epsilon},\frac{x}{\epsilon},\frac{F(t,X)}{\epsilon}\right)+\epsilon
k_\epsilon
 & \text{in}\quad (\frac{t_0}{2},2t_0)\times B_\frac{1}{2}(X_0)\\
U^\epsilon (t,X) &\text{outside},
\end{cases}
\end{equation}
where $k_\epsilon\in\Z$ will be chosen later.\\

We are going to
prove that $\phi^\epsilon$ is a supersolution of \eqref{Ueps>1/2} in
$Q_{r,r}(t_0,X_0)$ for some $r<\frac{1}{2}$ properly chosen and
such that $Q_{r,r}(t_0,X_0)\subset(\frac{t_0}{2},2t_0)\times
B_\frac{1}{2}(X_0)$. First, we observe that since $U^+-\phi$ attains a
strict maximum at $(t_0,X_0)$ with $U^+-\phi=0$ at $(t_0,X_0)$ and
$V$ is bounded, we can ensure that there exists $\ep_0=\ep_0(r)>0$
such that for $\ep\leq \ep_0$
\begin{equation}\label{phiep2}U^\ep(t,X)\leq \phi(t,X)+\epsilon
V\left(\frac{t}{\epsilon},\frac{x}{\epsilon},\frac{F(t,X)}{\epsilon}\right)-\gamma_r,\quad\text{in
}\left(\frac{t_0}{3},3t_0\right)\times B_1(X_0)\setminus
Q_{r,r}(t_0,X_0)\end{equation} for some $\gamma_r=o_r(1)>0$. Hence
choosing $k_\ep=\lceil \frac{-\gamma_r}{\ep}\rceil$ we get

\beqs U^\ep\leq \phi^\ep\quad \text{outside }Q_{r,r}(t_0,X_0).\eeqs

Let us next study the equation satisfied by~$\phi^\ep$. 
For this, we observe that
$$ \frac{a}{\ep} -1 \le
\left\lfloor\frac{a}{\ep}\right\rfloor \le \frac{a}{\ep}$$
and so, from \eqref{linklemmdis}, we
deduce that 
$$ 
U^\ep(t,x,x_{N+1}) +a-\ep \ \le \
U^\ep\left(t,x,x_{N+1}+
\ep\left\lfloor\frac{a}{\ep}\right\rfloor\right)
\ \le \ U^\ep(t,x,x_{N+1}) + a.$$
Consequently, passing to the limit, we obtain that
$U^+(t,x,x_{N+1}+a)=U^+(t,x,x_{N+1})+a$ for any
$a\in\R$.

{F}rom this, we derive that $\p_{x_{N+1}}
F(t_0,X_0)=\p_{x_{N+1}}\phi(t_0,X_0)=1$. Then, there exists
$r_0>0$ such that the map

$$\begin{array}{cccc}
                Id\times F:&Q_{r_0,r_0}(t_0,X_0) & \longrightarrow & \mathcal{U}_{r_0} \\
                &(t,x,x_{N+1})  & \longmapsto & (t,x,F(t,x,x_{N+1})) \\
\end{array}$$
is a $C^1$-diffeomorphism from $Q_{r_0,r_0}(t_0,X_0)$ onto its
range $\mathcal{U}_{r_0}$. Let $G:\mathcal{U}_{r_0}\rightarrow\R$
be the map such that $$\begin{array}{cccc}
                Id\times G:&\mathcal{U}_{r_0} & \longrightarrow & Q_{r_0,r_0}(t_0,X_0)  \\
                &(t,x,\xi_{N+1})  & \longmapsto & (t,x,G(t,x,\xi_{N+1})) \\
\end{array}$$ is the inverse of $Id\times F$.
 Let
us introduce the variables $\tau=t/\ep$, $Y=(y,y_{N+1})$ with
$y=x/\ep$ and $y_{N+1}=F(t,X)/\ep$. Let us consider a test
function $\psi$ such that $\phi^\ep-\psi$ attains a global zero
minimum at $(\ts,\Xs)\in Q_{r_0,r_0}(t_0,X_0)$ and define \beqs
\Gamma^\ep(\tau,Y)=\frac{1}{\ep}[\psi(\ep\tau,\ep y, G(\ep\tau,\ep
y,\ep y_{N+1}))-\phi(\ep\tau,\ep y,G(\ep\tau,\ep y,\ep
y_{N+1}))]-k_\ep.\eeqs Then
$$\psi(t,X)=\phi(t,X)+\epsilon
\Gamma^\ep\left(\frac{t}{\epsilon},\frac{x}{\epsilon},\frac{F(t,X)}{\epsilon}\right)+\epsilon
k_\epsilon$$ and $\Gamma^\ep$ is a test function for $V$:

 \beq\label{gammaeptestv}
\Gamma^\ep(\tas,\Ys)=V(\tas,\Ys)\quad\text{and}\quad
\Gamma^\ep(\tau,Y)\leq V(\tau,Y)\quad \text{for all }(\ep\tau,\ep
Y)\in Q_{r_0,r_0}(t_0,X_0),\eeq where $\tas=\ts/\ep$,
$\ys=\xs/\ep,$ $\overline{y}_{N+1}=F(\ts,\Xs)/\ep$,
$\Ys=(\ys,\overline{y}_{N+1})$. From Proposition
\ref{apprcorrectors}, we know that $V$ is Lipschitz continuous
w.r.t. $y_{N+1}$ with Lipschitz constant $M_\eta$ depending on
$\eta$. This implies that
\beq\label{vlipyn+1}|\p_{y_{N+1}}\Gamma^\ep(\tas,\Ys)|\leq
M_\eta.\eeq

Simple computations yield with $P=(p,1)\in\R^{N+1}$:
\beq\label{equaprogfconvchangevar}\left\{%
\begin{array}{ll}
    \lam^+_\eta+\p_{\tau}\Gamma^\ep(\tas,\Ys)=\p_t\psi(\ts,\Xs)+\left(1+\p_{y_{N+1}}\Gamma^\ep(\tas,\Ys)\right)
    (\p_t\phi(t_0,X_0)-\p_t\phi(\ts,\Xs)), \\
    \lam^+_\eta \tas+P\cdot\Ys+V(\tas,\Ys)=\frac{\phi^\ep(\ts,\Xs)}{\ep}-k_\ep. \\
\end{array}%
\right.\eeq Using \eqref{equaprogfconvchangevar} and
\eqref{vlipyn+1}, equation \eqref{apprcorrequ} yields for any
$\rho>0$
\begin{equation}\label{phiepquat}\begin{split}\p_t\psi(\ts,\Xs)+o_r(1)&\geq
L_\eta+\I^{1,\rho}[\Gamma^\ep(\tas,\cdot,\overline{y}_{N+1}),\ys]+\I^{2,\rho}[V(\tas,\cdot,\overline{y}_{N+1}),\ys]
\\&-W'\left(\frac{\phi^\ep(\ts,\Xs)}{\ep}\right)+\sigma\left(\frac{\ts}{\ep},\frac{\xs}{\ep}\right)-o_\eta(1).\end{split}\end{equation}
Now, to complete the proof of Theorem \ref{convergence},
we state
the following lemma (which will be proved in the next subsection):

\begin{lem} {\bf (Supersolution property for $\phi^\ep$)}\label{converglem}\\
For $\ep\leq \ep_0(r)< r\leq r_0$, we
have \beq\begin{split}\label{supphieplemm}\p_t\psi(\ts,\Xs)&\geq
\ep^{2s-1}\left(\I^{1,1}\left[\psi(\ts,\cdot,\overline{x}_{N+1}),\xs\right]+\I^{2,1}\left[\phi^\ep(\ts,\cdot,\overline{x}_{N+1}),\xs\right]\right)\\&
-W'\left(\frac{\phi^\ep(\ts,\Xs)}{\ep}\right)+\sigma\left(\frac{\ts}{\ep},\frac{\xs}{\ep}\right)-o_\eta(1)+o_r(1)+L_\eta.\end{split}\eeq\end{lem}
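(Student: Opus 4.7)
The plan is to convert the inequality \eqref{phiepquat}, which holds in the microscopic variables $(\tau,y,y_{N+1})$, into the target inequality \eqref{supphieplemm} in the macroscopic variables $(t,x,x_{N+1})$. The natural map is $\tau=t/\ep$, $y=x/\ep$, $y_{N+1}=F(t,X)/\ep$, and the L\'evy measure obeys $\mu(dz)=\ep^{2s}\mu(dz')$ under $z=z'/\ep$. This suggests choosing $\rho=1/\ep$ in \eqref{phiepquat} so that the integration region $\{|z|\le 1/\ep\}$ in $y$ maps exactly onto $\{|z'|\le 1\}$ in $x$, and analogously for the outer parts.

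Under this rescaling, the definitions give
$$ \Gamma^\ep(\tas,\ys+z'/\ep,\overline{y}_{N+1})-\Gamma^\ep(\tas,\ys,\overline{y}_{N+1}) = \tfrac{1}{\ep}\bigl[(\psi-\phi)^{\mathrm{tw}}(\ts,\xs+z')-(\psi-\phi)^{\mathrm{tw}}(\ts,\xs)\bigr], $$
where the superscript ``$\mathrm{tw}$'' denotes evaluation of the third coordinate at the twisted value $x_{N+1}=G(\ts,\cdot,F(\ts,\Xs))$, and similarly $V(\tas,\ys+z'/\ep,\overline{y}_{N+1})$ corresponds (modulo the explicit $\phi$-piece) to $\tfrac{1}{\ep}\phi^{\ep,\mathrm{tw}}(\ts,\xs+z')$ inside the cylinder where \eqref{phiep} is active. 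Combining with the factor $\ep^{2s}$ from the measure rescaling, the $y$-nonlocal integrals in \eqref{phiepquat} become $\ep^{2s-1}$ times the $x$-nonlocal integrals of $\psi^{\mathrm{tw}}$ and $\phi^{\ep,\mathrm{tw}}$, minus $\ep^{2s-1}\I[\phi(\ts,\cdot,\overline{x}_{N+1}),\xs]$, which is $O(\ep^{2s-1})=o_\ep(1)$ since $s>1/2$ and $\phi$ is bounded and $C^2$.

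The remaining task is to replace the twisted versions $\psi^{\mathrm{tw}}$ and $\phi^{\ep,\mathrm{tw}}$ by the untwisted $\psi(\ts,\cdot,\overline{x}_{N+1})$ and $\phi^\ep(\ts,\cdot,\overline{x}_{N+1})$. The crucial observation is the identity
$$ \nabla_x F(t_0,X_0)=\nabla_x\phi(t_0,X_0)-p=0, $$
built into the Ansatz via the choice \eqref{lamplans} of $p$, which forces $|\nabla_x F(\ts,\Xs)|=O(r)$ on $Q_{r,r}(t_0,X_0)$. Consequently the difference $\psi^{\mathrm{tw}}(\ts,\cdot)-\psi(\ts,\cdot,\overline{x}_{N+1})$ vanishes at $\xs$, has gradient there of order $r$, and is uniformly bounded in $C^2$, so its $\I^{1,1}$-contribution, after multiplication by $\ep^{2s-1}$, is $o_\ep(1)$. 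For the $V$-piece of $\phi^{\ep,\mathrm{tw}}-\phi^\ep(\ts,\cdot,\overline{x}_{N+1})$, the twist in the $y_{N+1}$-argument is controlled by the Lipschitz estimate $|\p_{y_{N+1}}V|\le\|W''\|_\infty/\eta$ of \eqref{appcorr3}; to integrate the linear-in-$z'$ part against the singular measure $\mu$ in the regime $2s>1$, one uses the H\"older regularity \eqref{contrderivappcorr} of $\p_{y_{N+1}}V$ in $y$ with any exponent $\alpha\in(2s-1,\min\{1,2s\})$, which delivers an $o_r(1)$ error at fixed $\eta$. The portion of the outer integral $\I^{2,1}$ lying outside the cylinder where the formula \eqref{phiep} is active is handled by the strict gap inequality \eqref{phiep2}, which forces $\phi^\ep=U^\ep\le(\phi+\ep V)^{\mathrm{tw}}(\ts,\cdot,\overline{x}_{N+1})-\gamma_r$ and allows the outer integral to be re-expressed directly in terms of $\phi^\ep$.

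The principal technical obstacle is precisely this ``twist'' coupling of $y$ and $y_{N+1}$ through $F/\ep$, which breaks the naive separation of the nonlocal integrals in the two independent variables. The saving grace is the Ansatz-level identity $\nabla_x F(t_0,X_0)=0$, ensuring $O(r)$ leading-order behavior of the twist correction on $Q_{r,r}(t_0,X_0)$, combined with the H\"older regularity \eqref{contrderivappcorr} of $\p_{y_{N+1}}V^{\pm}_\eta$ supplied by Proposition \ref{apprcorrectors}, which keeps these corrections integrable against the singular L\'evy measure for $s>1/2$, so that all error terms fit into the $o_\eta(1)+o_r(1)$ budget of \eqref{supphieplemm}.
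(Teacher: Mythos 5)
Your proposal assembles the right ingredients (the scaling $\mu(dz)=\ep^{2s}\mu(dz')$ producing the factor $\ep^{2s-1}$, the twist through $F/\ep$, the smallness of $\nabla_xF$ near $X_0$, the Lipschitz and H\"older bounds of Proposition \ref{apprcorrectors}), but the way you combine them has genuine gaps. First, you cannot take $\rho=1/\ep$ in \eqref{phiepquat}: the test function $\Gamma^\ep$ is defined through $G$ only on the preimage of $\mathcal{U}_{r_0}$, i.e.\ on a micro ball of radius of order $r_0/\ep$ around $\ys$, and the touching property \eqref{gammaeptestv} is available only for $(\ep\tau,\ep Y)\in Q_{r_0,r_0}(t_0,X_0)$; hence $\I^{1,1/\ep}[\Gamma^\ep(\tas,\cdot,\overline{y}_{N+1}),\ys]$ is not even defined, let alone covered by the viscosity inequality for $V$. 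Second, your twist control rests on $|\nabla_x F|=O(r)$, which holds only within distance $O(r)$ of $X_0$; at macroscopic displacements $|z'|$ of order one the shift of the $y_{N+1}$-argument is $|F(\ts,\xs+z',\overline{x}_{N+1})-F(\ts,\Xs)|/\ep\sim 1/\ep$, and neither the Lipschitz constant $M_\eta$ nor \eqref{contrderivappcorr} gives anything useful there. This is exactly why the paper keeps $\rho$ small (it is sent to $0^+$ only at the very end, which also removes the sole $\psi$-dependent error $C_\ep\rho^{2-2s}$) and splits the comparison over the intermediate scales $\ep\rho<\ep\delta<\ep R\le r$: the H\"older bound on $\p_{y_{N+1}}V$ handles $\rho<|z|\le\delta$, the Lipschitz bound together with $\sup_{|z|\le R}|\nabla_xF(\ts,\xs+\ep z,\overline{x}_{N+1})|\le C(\ep R+r)$ handles $\delta<|z|\le R$ (with $r/\delta^{2s-1}\to0$), and plain boundedness of $V$ gives the $C/R^{2s}=o_r(1)$ tail. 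Without this multiscale decomposition the claim that the twist errors are $o_r(1)$ at fixed $\eta$ is unsubstantiated.

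The far-field step is also incorrect as stated: for $|z'|>1$ there is no relation between $V$ and $\phi^\ep$, since the formula \eqref{phiep} holds only in $(\frac{t_0}{2},2t_0)\times B_{\frac12}(X_0)$ and the gap inequality \eqref{phiep2} only in $(\frac{t_0}{3},3t_0)\times B_1(X_0)\setminus Q_{r,r}(t_0,X_0)$, so the outer micro integral of $V$ cannot be ``re-expressed in terms of $\phi^\ep$''. No such conversion is needed: in the paper $\ep^{2s-1}\I^{2,1}[\phi^\ep(\ts,\cdot,\overline{x}_{N+1}),\xs]$ and $\ep^{2s-1}\I^{1,1}[\phi(\ts,\cdot,\overline{x}_{N+1}),\xs]$ are simply $O(\ep^{2s-1})=o_r(1)$ for $\ep\le\ep_0(r)$ (linear growth of $U^\ep$, $2s>1$), and the micro tail of $V$ beyond $R$ is $O(R^{-2s})$. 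Finally, your replacement of the twisted $\psi$ by $\psi(\ts,\cdot,\overline{x}_{N+1})$ produces an error controlled by the $C^2$ norm of the particular test function $\psi$, hence an $o_\ep(1)$ whose threshold depends on $\psi$; but \eqref{supphieplemm} must hold with $o_r(1)$, $o_\eta(1)$ and $\ep_0(r)$ uniform in $\psi$, otherwise the subsequent conclusion that $\phi^\ep$ is a supersolution of \eqref{Ueps>1/2} in $Q_{r,r}(t_0,X_0)$ does not follow. In the paper every error term except $C_\ep\rho^{2-2s}$ involves only $V$, $\phi$ and $U^\ep$, and that one term is eliminated by letting $\rho\to0^+$.
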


The proof of Lemma~\ref{converglem} is postponed
to the next subsection, for the convenience of the reader,
so we complete now the proof of Theorem \ref{convergence}.
For this, let $r\leq r_0$ be so small that $o_r(1)\geq-L_1/4$. Then,
recalling that $L_\eta-o_\eta(1)\geq L_1/2$, for $\ep\leq
\ep_0(r)$ we have
\begin{equation*}\begin{split}\p_t\psi(\ts,\Xs)&\geq
\ep^{2s-1}\left(\I^{1,1}\left[\psi(\ts,\cdot,\overline{x}_{N+1}),\xs\right]+\I^{2,1}\left[\phi^\ep(\ts,\cdot,\overline{x}_{N+1}),\xs\right]\right)
-W'\left(\frac{\phi^\ep(\ts,\Xs)}{\ep}\right)\\&+\sigma\left(\frac{\ts}{\ep},\frac{\xs}{\ep}\right)+\frac{L_1}{4},\end{split}\end{equation*}
and therefore $\phi^\ep$ is a supersolution of \eqref{Ueps>1/2} in
$Q_{r,r}(t_0,X_0)$.\\
Since $U^\ep\leq \phi^\ep$ outside
$Q_{r,r}(t_0,X_0)$, by the comparison principle, we conclude that
 $$U^\ep(t,X)\leq
\phi(t,X)+\epsilon
V\left(\frac{t}{\epsilon},\frac{x}{\epsilon},\frac{F(t,X)}{\epsilon}\right)+\ep
k_\ep\quad\text{in }Q_{r,r}(t_0,X_0)$$ and we obtain the desired
contradiction by passing to the upper limit as $\ep\rightarrow 0$ at $(t_0,X_0)$ using the fact that $U^+(t_0,X_0)=\phi(t_0,X_0)$: $0\leq-\gamma_r$.\\
This ends the proof of Theorem \ref{convergence}.\\

\subsection{Proof of Lemma \ref{converglem}}
The result will follow from (\ref{phiepquat})
and the following inequality
\begin{equation}\label{eq::s15}
\begin{array}{l}
\I^{1,\rho}[\Gamma^\ep(\tas,\cdot,\overline{y}_{N+1}),\ys]+\I^{2,\rho}[V(\tas,\cdot,\overline{y}_{N+1}),\ys]\\ 
\\
\ge \ep^{2s-1}\left(\I^{1,1}\left[\psi(\ts,\cdot,\overline{x}_{N+1}),\xs\right]+\I^{2,1}\left[\phi^\ep(\ts,\cdot,\overline{x}_{N+1}),\xs\right]\right) + o_r(1)
\end{array}
\end{equation}

Keep in mind that $\ys_{N+1}=\frac{F(\ts,\Xs)}{\ep}$. Since
$\psi(t,X)=\phi(t,X)+\ep\Gamma^\ep\left(\frac{t}{\ep},\frac{x}{\ep},\frac{F(t,X)}{\ep}\right)+\ep
k_\ep$, we have
\begin{equation}\label{i1psi}\begin{split}\I^{1,1}\left[\psi(\ts,\cdot,\overline{x}_{N+1}),\xs\right]= I_1
+I_2,
\end{split}
\end{equation}where
$$\left\{\begin{array}{lll}
I_1 &=&\displaystyle \int_{|x|\leq1}\ep
\left(\begin{array}{l}
\Gamma^\ep\left(\frac{\overline{t}}{\ep},\frac{\xs+x}{\ep},
\frac{F(\ts,\xs+x,\overline{x}_{N+1})}{\ep}\right)-\Gamma^\ep(\tas,\Ys)\\
-\nabla_{y}\Gamma^\ep(\tas,\Ys)\cdot
\frac{x}{\ep}-\p_{y_{N+1}}\Gamma^\ep(\tas,\Ys)\nabla_x
F(\ts,\Xs)\cdot \frac{x}{\ep}
\end{array}\right)\mu(dx),\\
&&\\
I_2& =&\displaystyle \int_{|x|\leq 1}\left(\phi(\ts,\xs+x,\overline{x}_{N+1})-\phi(\ts,\Xs)-\nabla\phi(\ts,\Xs)\cdot
x\right)\mu(dx).
\end{array}\right.$$

In order to show (\ref{eq::s15}), we show successively in Steps 1, 2 and 3:
$$\left\{\begin{array}{l}
\ep^{2s-1}I_1\leq
\I^{1,\rho}[\Gamma^\ep(\tas,\cdot,\overline{y}_{N+1}),\ys]+\I^{2,\rho}[V(\tas,\cdot,\overline{y}_{N+1}),\ys]
+o_r(1)+C_\ep\rho^{2-2s}\\
\\
\ep^{2s-1}I_2\leq  o_r(1)\\
\\
\ep^{2s-1}\I^{2,1}\left[\phi^\ep(\ts,\cdot,\overline{x}_{N+1}),\xs\right]\leq  o_r(1)
\end{array}\right.$$
Because the expressions are non linear and non-local and with a singular kernel, 
there is no simple computation and we have to carefully check those inequalities 
sometimes splitting terms in easier parts to estimate.
\medskip

\noindent{\bf Step 1:} We can choose $\ep_0$ so small that for any
$\ep\leq\ep_0$ and any $\rho>0$ small enough $$\ep^{2s-1}I_1\leq
\I^{1,\rho}[\Gamma^\ep(\tas,\cdot,\overline{y}_{N+1}),\ys]+\I^{2,\rho}[V(\tas,\cdot,\overline{y}_{N+1}),\ys]
+o_r(1)+C_\ep\rho^{2-2s}.$$ \\

Take $\rho>0$,  $\delta>\rho$ small and $R>0$ large and such that
$\ep R<1$. Since $g$ is even, we can write

$$I_1=I_1^0+I_1^1+I_1^2+I_1^3,$$ where
\begin{equation*}\begin{split}I_1^0&=\int_{|x|\leq\ep\rho}\ep\left(\Gamma^\ep\left(\frac{\overline{t}}{\ep},\frac{\xs+x}{\ep},
\frac{F(\ts,\xs+x,\overline{x}_{N+1})}{\ep}\right)-\Gamma^\ep(\tas,\Ys)-\nabla_{y}\Gamma^\ep(\tas,\Ys)\cdot
\frac{x}{\ep}\right.\\&\left.-\p_{y_{N+1}}\Gamma^\ep(\tas,\Ys)\nabla_x
F(\ts,\Xs)\cdot \frac{x}{\ep}\right)\mu(dx),\end{split}
\end{equation*}
$$I_1^1=\int_{\ep\rho\leq|x|\leq\ep \delta}\ep\left(\Gamma^\ep\left(\frac{\overline{t}}{\ep},\frac{\xs+x}{\ep},
\frac{F(\ts,\xs+x,\overline{x}_{N+1})}{\ep}\right)-
\Gamma^\ep(\tas,\Ys)\right)\mu(dx),$$
$$I_1^2=\int_{\ep\delta\leq|x|\leq\ep R}\ep\left(\Gamma^\ep\left(\frac{\overline{t}}{\ep},\frac{\xs+x}{\ep},
\frac{F(\ts,\xs+x,\overline{x}_{N+1})}{\ep}\right)-
\Gamma^\ep(\tas,\Ys)\right)\mu(dx),$$
$$I_1^3=\int_{\ep R\leq|x|\leq1}\ep\left(\Gamma^\ep\left(\frac{\overline{t}}{\ep},\frac{\xs+x}{\ep},
\frac{F(\ts,\xs+x,\overline{x}_{N+1})}{\ep}\right)-
\Gamma^\ep(\tas,\Ys)\right)\mu(dx).$$

 Moreover
$$\I^{2,\rho}[V(\tas,\cdot,\overline{y}_{N+1}),\ys]=J_1+J_2+J_3,$$ where

$$J_1=\int_{\rho<|z|\leq\delta}(V(\tas,\ys+z,\overline{y}_{N+1})- V(\tas,\Ys))\mu(dz),$$
$$J_2=\int_{\delta<|z|\leq R}(V(\tas,\ys+z,\overline{y}_{N+1})- V(\tas,\Ys))\mu(dz),$$

$$J_3=\int_{|z|>R}(V(\tas,\ys+z,\overline{y}_{N+1})- V(\tas,\Ys))\mu(dz).$$
\medskip

\noindent STEP 1.1: \emph{Estimate of $\ep^{2s-1}I_1^0$ and
$\I^{1,\rho}[\Gamma^\ep(\tas,\cdot,\overline{y}_{N+1}),\ys]$.}

Since $\Gamma^\ep$ is of class $C^2$, we have 
\beq\label{step1.1}
|\ep^{2s-1}I_1^0|,\,|\I^{1,\rho}[\Gamma^\ep(\tas,\cdot,\overline{y}_{N+1}),\ys]|\leq
C_\ep\rho^{2-2s},\eeq where $C_\ep$ depends on the second derivatives of
$\Gamma^\ep$. Notice
that if we knew that $V$ is smooth in $y$ too, we could choose $\rho=0$. 
\medskip

\noindent STEP 1.2 \emph{ Estimate of $\ep^{2s-1}I_1^1-J_1$. }

Using \eqref{gammaeptestv} and the fact that $g$ is even, we can
estimate $\ep^{2s-1}I_1^1-J_1$ as follows
\begin{equation*}\begin{split}\ep^{2s-1}I_1^1-J_1&\leq\int_{\rho<|z|\leq\delta}\left[V\left(\tas,\ys+z,\frac{F(\ts,\xs+\ep
z,\xs_{N+1})}{\ep}\right)-V\left(\tas,\ys+z,\frac{F(\ts,\Xs)}{\ep}\right)\right]\mu(dz)\\&
=\int_{\rho<|z|\leq\delta}\left\{\left[V\left(\tas,\ys+z,\frac{F(\ts,\xs+\ep
z,\xs_{N+1})}{\ep}\right)-V\left(\tas,\ys+z,\frac{F(\ts,\Xs)}{\ep}\right)\right.\right.\\&\left.
-\p_{y_{N+1}}V\left(\tas,\ys+z,\frac{F(\ts,\Xs)}{\ep}\right)\nabla_xF(\ts,\Xs)\cdot
z\right]\\&
\left.+\left[\p_{y_{N+1}}V(\tas,\ys+z,\ys_{N+1})-\p_{y_{N+1}}V(\tas,\Ys)\right]\nabla_x
F(\ts,\Xs)\cdot z\right\}\mu(dz).
\end{split}
\end{equation*}
Next, using  \eqref{contrderivappcorr}, we get
\beq\label{i1-j1}\ep^{2s-1}I_1^1-J_1\leq
C\int_{|z|\leq\delta}(|z|^2+|z|^{1+\al})\mu(dz)\leq C
\delta^{\al+1-2s},\eeq
for $2s-1<\al<1$.
\medskip

\noindent STEP 1.3 \emph{ Estimate of $\ep^{2s-1}I_1^2-J_2$. }

If $M_\eta$ is the Lipschitz constant of $V$ w.r.t. $y_{N+1}$,
then
\begin{equation*}\begin{split}\ep^{2s-1}I_1^2-J_2&\leq\int_{\delta<|z|\leq
R}\left(V\left(\tas,\ys+z,\frac{F(\ts,\xs+\ep
z,\xs_{N+1})}{\ep}\right)-V\left(\tas,\ys+z,\frac{F(\ts,\Xs)}{\ep}\right)\right)\mu(dz)\\
&\leq M_\eta\int_{\delta<|z|\leq R}\left|\frac{F(\ts,\xs+\ep
z,\xs_{N+1})}{\ep}-\frac{F(\ts,\Xs)}{\ep}\right|\mu(dz)\\&\leq
M_\eta\int_{\delta<|z|\leq R}\sup_{|z|\leq
R}|\nabla_xF(\ts,\xs+\ep z,\xs_{N+1})||z|\mu(dz).
\end{split}
\end{equation*}
Then 

\beq\label{i2-j2}\ep^{2s-1}I_1^2-J_2\leq C\sup_{|z|\leq
R}|\nabla_xF(\ts,\xs+\ep z,\xs_{N+1})|\left(\frac{1}{\delta^{2s-1}}-\frac{1}{R^{2s-1}}\right).\eeq
\medskip

\noindent STEP 1.4: {\em Estimate of $\ep^{2s-1}I_1^3$ and $J_3$. }

Since $V$ is uniformly bounded on $\R^+\times\R^{N+1}$, we have
\begin{equation}\label{i3est}\begin{split}\ep^{2s-1}I_1^3&\leq
\int_{R<|z|\leq
\frac{1}{\ep}}\left(V\left(\tas,\ys+z,\frac{F(\ts,\xs+\ep
z,\xs_{N+1})}{\ep}\right)- V(\tas,\Ys)\right)\mu(dz)\\&\leq
\int_{|z|>R}2\|V\|_\infty\mu(dz)\leq \frac{C}{R^{2s}}.
\end{split}
\end{equation}
Similarly \beq\label{jeest}|J_3|\leq \frac{C}{R^{2s}}.\eeq

Now, from \eqref{step1.1}, \eqref{i1-j1}, \eqref{i2-j2},
\eqref{i3est} and \eqref{jeest}, we infer that \beqs\begin{split}
\ep^{2s-1}I_1&\leq
\I^{1,\rho}[\Gamma^\ep(\tas,\cdot,\overline{y}_{N+1}),\ys]+\I^{2,\rho}[V(\tas,\cdot,\overline{y}_{N+1}),\ys]+
2C_\ep\rho^{2-2s}+C \delta^{\al+1-2s}\\&+C\sup_{|z|\leq R}|\nabla_xF(\ts,\xs+\ep
z,\xs_{N+1})|\left(\frac{1}{\delta^{2s-1}}-\frac{1}{R^{2s-1}}\right)+\frac{C}{R^{2s}}.\end{split}\eeqs
We remark that, from the definition of $F$, we have
\beqs\begin{split} \sup_{|z|\leq R}|\nabla_xF(\ts,\xs+\ep z,\xs_{N+1})|&\leq \sup_{|z|\leq R}|\nabla\phi(\ts,\xs+\ep z,\xs_{N+1})-\nabla\phi(t_0,X_0)|
\\&\leq\sup_{|z|\leq R}|\nabla\phi(\ts,\xs+\ep z,\xs_{N+1})-\nabla\phi(\ts,\Xs)|\\&+|\nabla\phi(\ts,\Xs)-\nabla\phi(t_0,X_0)|
\\&\leq C(\ep R+r).\end{split}\eeqs
Now, we choose $R=R(r)$ such $R\rightarrow+\infty$ as
$r\rightarrow0^+$, $\ep_0=\ep_0(r)$ such that $R\ep_0(r)\leq r$
and $\delta=\delta(r)>0$ such that $\delta\rightarrow0$ as
$r\rightarrow0^+$ and $r/\delta^{2s-1}\rightarrow0$ as
$r\rightarrow0^+$. With this choice, for any $\ep\leq\ep_0$ and
any $\rho<\delta$
$$C \delta^{\al+1-2s}+C\sup_{|z|\leq
R}|\nabla_xF(\ts,\xs+\ep
z,\xs_{N+1})|\left(\frac{1}{\delta^{2s-1}}-\frac{1}{R^{2s-1}}\right)+\frac{C}{R^{2s}}=o_r(1)\quad\text{as
}r\rightarrow0^+,$$ and Step 1 is proved.
\medskip

The next two steps are trivial.
\medskip

\noindent{\bf Step 2: $\ep^{2s-1}I_2\leq  C\ep^{2s-1}$. }
\medskip

\noindent{\bf Step 3: $\ep^{2s-1}\I^{2,1}\left[\phi^\ep(\ts,\cdot,\overline{x}_{N+1}),\xs\right]\leq C\ep^{2s-1}$. }\medskip

Finally  Steps 1, 2 and 3 give
\beqs\begin{split}
&\ep^{2s-1}\I^{1,1}\left[\psi(\ts,\cdot,\overline{x}_{N+1}),\xs\right]+\ep^{2s-1}\I^{2,1}\left[\phi^\ep(\ts,\cdot,\overline{x}_{N+1}),\xs\right]\\&\leq
\I^{1,\rho}[\Gamma^\ep(\tas,\cdot,\overline{y}_{N+1}),\ys]+\I^{2,\rho}[V(\tas,\cdot,\overline{y}_{N+1}),\ys]
+o_r(1)+C_\ep\rho^{2-2s}.\end{split}\end{equation*} from which, using
inequality \eqref{phiepquat} and letting $\rho\rightarrow0^+$, we
get \eqref{supphieplemm}.


\section{Proof of Theorem \ref{convergences<1/2}}\label{yuio9}

The proof of Theorem \ref{convergences<1/2} is similar to the proof of Theorem \ref{convergence}, therefore we only give a sketch of it.
As in Theorem \ref{convergence}, we argue by contradiction, assuming that there is a 
test function $\phi$ such
that $U^+-\phi$ attains a strict zero maximum at $(t_0,X_0)$ with $t_0>0$
and $X_0=(x_0,x_{N+1}^0)$, and
$$\p_t \phi(t_0,X_0)=\overline{H}_2(L_0)+\theta$$
for some $\theta>0$, where  
\begin{equation}\label{l0}\begin{split}L_0=&\int_{|x|\leq1}(\phi(t_0,x_0+x,x_{N+1}^0)-\phi(t_0,X_0)-\nabla_x\phi(t_0,X_0)\cdot
x)\mu(dx)\\&+\int_{|x|>1}(U^+(t_0,x_0+x,x_{N+1}^0)-U^+(t_0,X_0))\mu(dx).\end{split}
\end{equation}
Then, we choose $L_1>0$ and a sequence 
 $L_\eta\rightarrow L_1$ as
$\eta\rightarrow0^+$, such that
$$\lam^+_\eta(0,L_\eta+L_0)=\lam(0,L_1+L_0)=\lam(0,L_0)+\theta=\overline{H}_2(L_0)+\theta.$$
Let $V$ be the approximate supercorrector given by
Proposition \ref{apprcorrectors} with
$$ p=0,\quad
L=L_0+L_\eta$$ and
$$\lam^+_\eta=\lam^+_\eta(0,L_0+L_\eta)=\p_t \phi(t_0,X_0).$$
Let us introduce 
the
``$x_{N+1}$-twisted perturbed test function'' $\phi^\epsilon$
defined by:

\begin{equation*}\phi^\epsilon(t,X):=
\begin{cases}
\phi(t,X)+\epsilon^{2s}
V\left(\frac{t}{\epsilon^{2s}},\frac{x}{\epsilon},\frac{F(t,X)}{\epsilon^{2s}}\right)+\epsilon^{2s}
k_\epsilon
 & \text{in}\quad (\frac{t_0}{2},2t_0)\times B_\frac{1}{2}(X_0)\\
U^\epsilon (t,X) &\text{outside},
\end{cases}
\end{equation*} where $F(t,X)=\phi(t,X)-\lam^+_\eta t$ and 
 $k_\epsilon\in\Z$ is opportunely chosen.
As in Section \ref{convprofs>1/2sec}, we can prove that 
$\phi^\ep$ is a supersolution of \eqref{Ueps<1/2} in a neighborhood $Q_{r,r}(t_0,X_0)$ of $(t_0,X_0)$, for some small $r$ properly chosen.  Moreover
\beqs U^\ep\leq \phi^\ep\quad \text{outside }Q_{r,r}(t_0,X_0).\eeqs

The contradiction follows by comparison.


\section{ Proof of Theorem \ref{hullprop}}\label{OR}
In this section we restrict ourself to the case: $N=1$, $\I=-(-\Delta)^s$ and $\sigma\equiv 0$.  For fixed $p,L\in\R$, let us introduce the corrector 
\beqs u(\tau,y):=w(\tau,y)+py\eeqs where $w$ is the solution of \eqref{w} given by Theorem \ref{ergodic}. Then $u$ is solution of 

\begin{equation}\label{sv}
\begin{cases}
\p_{\tau} u= L+\I[u(\tau,\cdot)]-W'(u)&\text{in}\quad \R^+\times\R\\ 
u(0,y)=py& \text{on}\quad \R,
\end{cases}
\end{equation}
and by the ergodic property \eqref{wergodic} it satisfies
\beq\label{uergodic} |u(\tau,y)-py-\lambda\tau|\leq C.\eeq

The idea underlying the proof of Theorem \ref{hullprop} is related to a fine asymptotics of equation~\eqref{sv}. We want to show that if $u$ solves \eqref{sv}
with $p=\delta|p_0|$ and $L=\delta^{2s}L_0$, i.e.
\begin{equation}\label{eq::s102}
\partial_\tau u = \delta^{2s} L_0 +\I[u(\tau,\cdot)] -W'(u)
\end{equation}
and $u(0,y)=\delta p_0y$, then 
\beqs u(\tau,y)\sim \delta p_0y+\lambda\tau+\text{bounded}\quad\text{with }\lambda\sim \delta^{1+2s}c_0|p_0|L_0.\eeqs
We deduce that we should have 
\beqs\frac{u(\tau,y)}{\tau}\rightarrow \lam=\delta^{1+2s}c_0|p_0|L_0\quad\text{as }\tau\rightarrow+\infty.\eeqs
We see that this $\lam=\overline{H}(\delta p_0,\delta^{2s}L_0)$ is exactly the one we expect asymptotically in Theorem~\ref{hullprop}.

Following the idea of  \cite{mp2}, one may expect to find particular solutions $u$ of (\ref{eq::s102}) that we can write
$$u(\tau,y)=h(\delta p_0 y + \lambda \tau)$$
for some $\lambda\in\R$ and 
a function $h$ (called hull function) satisfying
$$|h(z)-z|\le C.$$
This means that $h$ solves
$$\lambda h' = \delta^{2s} L_0+ \delta^{2s} |p_0|^{2s} \I [h] -W'(h).$$
Then it is natural to introduce the non-linear operator:
\begin{equation}\label{eq::s103}
NL^\lambda_{L_0}[h]:= \lambda h' - \delta^{2s} L_0- \delta^{2s} |p_0|^{2s} \I [h] + W'(h)
\end{equation}
and for the ansatz for $\lambda$:
$$\overline{\lambda}^{L_0}_\delta = \delta^{1+2s} c_0 |p_0|L_0$$
it is natural to look for an ansatz $h^{L_0}_\delta$ for $h$.
We define (see Proposition \ref{hproperties})
$$h_\delta^{L_0}(x)=\LIM_{n\rightarrow+\infty}s_{\delta,n}^{L_0}(x)$$
where for $s\ge \frac{1}{2}$ and for all
$p_0\neq 0$, $L_0\in\R$, $\delta>0$ and $n\in\N$ we
define the sequence of functions $\{s_{\delta,n}^{L_0}(x)\}_n$ by
\begin{equation}\label{eq::s-s100}
s_{\delta,n}^{L_0}(x)=\frac{\delta^{2s} L_0}{\al}+\SUM_{i=-n}^{n}\phi\left(\frac{x-i}{\delta|p_0|}\right)-n
+\delta^{2s}\SUM_{i=-n}^{n}\psi\left(\frac{x-i}{\delta|p_0|}\right)
\end{equation}
where $\al=W''(0)>0$ and $\phi$ is the solution of \eqref{phi}. 
The corrector $\psi$ is the solution of the following
problem

\begin{equation}\label{psi}
\begin{cases}\I[\psi]=W''(\phi)\psi+\frac{L_0}{W''(0)}(W''(\phi)-W''(0))+c\phi'&\text{in}\quad \R\\
\LIM_{x\rightarrow{+\atop -}\infty}\psi(x)=0\\
c=\frac{L_0}{\int_{\R} (\phi')^2}.
\end{cases}
\end{equation} 
For $s<\frac{1}{2}$, the function $\psi$ defined above may not decay fast enough so that the sequence 
$$\SUM_{i=-n}^{n}\psi\left(\frac{x-i}{\delta|p_0|}\right)$$ converges.  Therefore, in this case we define

\begin{equation}\label{eq::s-s100s<1/2}
s_{\delta,n}^{L_0}(x)=\frac{\delta^{2s} L_0}{\al}+\SUM_{i=-n}^{n}\phi\left(\frac{x-i}{\delta|p_0|}\right)-n
+\delta^{2s}\SUM_{i=-n}^{n}\psi\left(\frac{x-i}{\delta|p_0|}\right)\tau\left(\frac{x-i}{\delta|p_0|}\right)
\end{equation}
where $\tau=\tau_R$, is a smooth function satisfying 
\beq\label{tauproper}\begin{cases}0\leq\tau(x)\leq1&\text{for any  }x\in\R\\ \tau_R(x)=1&\text{if } |x|\leq R\\
\tau_R(x)=0& \text{if }|x|\geq 2 R.
\end{cases}
\eeq
The number  $R$ is a large parameter that will be chosen depending on $\delta$. 
\begin{prop}{ \bf (Good ansatz)}\label{hproperties}\\
Assume \eqref{W} and $R=\frac{1}{2\delta|p_0|}$ in \eqref{tauproper}. Then, for any $L\in\R$, $\delta>0$ and $x\in\R$, there exists
the finite limit
$$h_\delta^{L}(x)=\LIM_{n\rightarrow+\infty}s_{\delta,n}^{L}(x).$$
Moreover $h_\delta^{L}$ has the following properties:
\begin{itemize}
    \item [(i)]$h_\delta^{L}\in C^{2}(\R)$ and
satisfies
\begin{equation}\label{nl=odelta}NL_{L}^{\overline{\lam}_\delta^{L}}[h_\delta^{L}](x)=o(\delta^{2s}),\end{equation}
where
 $\LIM_{\delta\rightarrow 0}\frac{o(\delta^{2s})}{\delta^{2s}}=0$, uniformly
 for $x\in\R$ and locally uniformly in $L\in\R$;
Here 
$$\overline{\lam}_\delta^{L} = \delta^{1+2s} c_0 |p_0| L $$
and $NL_{L}^{\lambda}$ is defined in (\ref{eq::s103}).
    \item [(ii)]There exists a constant $C>0$ such that
    $|h_\delta^{L}(x)-x|\leq C$ for any $x\in\R$.
\end{itemize}
\end{prop}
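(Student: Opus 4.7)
The plan is to establish property (ii) and $C^2$ regularity together with pointwise convergence of the series defining $h_\delta^L$, and then to carry out the asymptotic expansion giving (i). For $s>\frac{1}{2}$, I would invoke the sharp decay estimates $\phi(y)-H(y)=O(|y|^{-2s})$ and analogous bounds for $\phi',\phi''$ from \cite{psv} (where $H$ denotes the Heaviside function), together with an analogous decay for $\psi$ obtained from the linearized equation \eqref{psi}; since $2s>1$, the tail sum $\sum_i |i|^{-2s}$ converges absolutely, so the partial sums $s_{\delta,n}^L(x)$ form a Cauchy sequence locally uniformly in $x$ together with their first two derivatives. For $s\leq\frac{1}{2}$ the decay is too slow for absolute convergence, and here I would exploit the evenness of $W$ required by \eqref{W}, which yields the antisymmetry $\phi(-y)=1-\phi(y)$; pairing the terms $i$ and $-i$ in $\sum_{i=-n}^n\phi((x-i)/(\delta|p_0|))-n$ produces cancellation at the leading order of the tails. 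The $\psi$-sum in \eqref{eq::s-s100s<1/2} is made convergent by the cutoff $\tau_R$ at the scale $R=1/(2\delta|p_0|)$. Once convergence holds, property (ii) follows because, up to $O(\delta^{2s})$ additive terms, $h_\delta^L(x)-x$ is bounded and integer-periodic in $x$, the bound being controlled by $L^1$-type norms of $\phi-H$.

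The main content is (i). I would plug the ansatz into \eqref{eq::s103} and expand all four terms modulo $o(\delta^{2s})$. Three ingredients drive the cancellation: (a) the scaling identity $\I[\phi((\cdot-i)/(\delta|p_0|))](x)=(\delta|p_0|)^{-2s}W'(\phi)((x-i)/(\delta|p_0|))$, coming from \eqref{phi}, which turns the factor $\delta^{2s}|p_0|^{2s}$ multiplying $\I$ in $NL^\lambda_{L_0}$ into a collection of layer contributions $W'(\phi_i)$ at order $O(1)$; (b) the periodicity of $W'$, combined with the observation that $\sum_i\phi((x-i)/(\delta|p_0|))-n$ lies, up to an integer $k$, inside a single unit interval, which lets one reduce $W'(h_\delta^L)$ to $W'(\phi_k)+\delta^{2s}W''(\phi_k)(L/\alpha+\psi_k)$ modulo $o(\delta^{2s})$, so that the $O(1)$ term $W'(\phi_k)$ cancels against the dominant contribution from the $\I$-term; (c) the order-$\delta^{2s}$ equation for $\psi$ in \eqref{psi}, whose source term is constructed so that the remaining $O(\delta^{2s})$ contributions balance against the drift $\overline{\lambda}_\delta^{L}(h_\delta^L)'$. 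Testing the equation for $\psi$ against $\phi'$ gives the Fredholm solvability condition $c=L_0/\int_\R(\phi')^2=c_0 L_0$, which is precisely the Orowan speed appearing in $\overline{\lambda}_\delta^{L}=\delta^{1+2s}c_0|p_0|L$.

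The hard part is the control of the nonlocal cross-terms. Because $\I$ is not local, $\I[\sum_i\phi_i](x)$ receives contributions from all layers $\phi_i$ centred at $i\neq k$, and these cross terms decay only polynomially at the rate $2s$ dictated by the layer asymptotics. Showing that they contribute only $o(\delta^{2s})$ requires summing the sharp tail bounds from \cite{psv} and, in the strongly nonlocal regime $s<\frac{1}{2}$, an improved second-order expansion of $\phi$ at infinity made possible only by the evenness of $W$; this is precisely the point that forces the symmetry assumption in \eqref{W} in that regime. When the cutoff $\tau_R$ is present, one has additionally to control commutators of the form $\I[\tau_R\psi(\cdot-i)]-\tau_R\I[\psi(\cdot-i)]$ and verify that the choice $R=1/(2\delta|p_0|)$ balances the localization error against the tail error of the $\psi$-sum so that the final remainder is $o(\delta^{2s})$.
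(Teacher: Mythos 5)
Your overall architecture (decay estimates for $\phi,\psi$, convergence of the superposition, expansion of $NL^\lambda_{L}$ using the equations \eqref{phi} and \eqref{psi}, the Fredholm condition $c=c_0L$, and the commutator/cutoff bookkeeping for $s<\frac12$) matches the paper. But there is a genuine gap in the core asymptotic step (i), and it is precisely where the paper works hardest. You claim that for $s>\frac12$ the crude bound $\phi(y)-H(y)=O(|y|^{-2s})$ plus absolute summability of $\sum_i|i|^{-2s}$ suffices, and correspondingly in (b) you reduce $W'(h^L_\delta)$ to $W'(\phi_k)+\delta^{2s}W''(\phi_k)(L/\al+\psi_k)$ ``modulo $o(\delta^{2s})$''. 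This discards the contribution $W''(\phi(x_{i_0}))\SUM_{i\neq i_0}\widetilde{\phi}(x_i)$, which is genuinely of size $\delta^{2s}$ (e.g.\ for $\gamma$ of order one the two nearest far layers contribute $\sim\delta^{2s}$ without cancelling each other). On the nonlocal side the far layers produce $\SUM_{i\neq i_0}W'(\widetilde{\phi}(x_i))=W''(0)\SUM_{i\neq i_0}\widetilde{\phi}(x_i)+O(\widetilde\phi)^2$, so after the cancellations one is left with $(W''(\phi(x_{i_0}))-W''(0))\SUM_{i\neq i_0}\widetilde{\phi}(x_i)$; summing absolute tail bounds only gives $O(\delta^{2s})$ for this term, not $o(\delta^{2s})$, because near $\gamma=0$ the prefactor $W''(\phi(x_{i_0}))-W''(0)$ is $O(1)$. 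The paper's mechanism, which your proposal is missing, is the refined expansion \eqref{phiinfinity}, $\phi(y)-H(y)=-\frac{1}{2s\al}\frac{y}{|y|^{1+2s}}+O(|y|^{-1-2s})$ (and \eqref{psiinfinity} for $\psi$): the antisymmetric leading tail makes the signed sum $\SUM_{i\neq i_0}\frac{x-i}{|x-i|^{1+2s}}$ of size $O(|\gamma|)$ (Claim 1), so $\SUM_{i\neq i_0}\widetilde{\phi}(x_i)=O(\delta^{2s}|\gamma|+\delta^{1+2s})$, and this is played against the estimate $|W''(\phi(x_{i_0}))-W''(0)|=O(\delta^{2s}/|\gamma|^{2s}+\delta^{1+2s}/|\gamma|^{1+2s})$ when $|\gamma|\geq\delta|p_0|$; only this two-regime balance yields the uniform $O(\delta^{1+2s})=o(\delta^{2s})$ bound (Step 3 of Lemma \ref{lem::1011}). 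You attribute the need for refined cancellations only to $s<\frac12$, but they are needed for all $s$; for $s<\frac12$ the paper additionally uses $W'''(0)=0$ (evenness) together with the pairing estimate \eqref{phisums<1/2estim}.

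A secondary but real problem is the borderline case $s=\frac12$, which the proposition covers and the paper treats with the ansatz \eqref{eq::s-s100} (no cutoff): there your dichotomy fails on both sides, since $\sum_i|i|^{-2s}$ diverges so absolute convergence is unavailable, while the evenness of $W$ (hence the antisymmetry $\phi(-y)=1-\phi(y)$ you want to pair terms with) is not assumed in \eqref{W} for $s=\frac12$. The paper instead gets convergence and the bound (ii) at $s=\frac12$ from the same refined asymptotics \eqref{phiinfinity}--\eqref{psiinfinity} and Claim 1, which rely on the antisymmetry of the kernel tail $y\mapsto y/|y|^{1+2s}$ rather than on any symmetry of $W$. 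Finally, note that the paper also needs second and third derivative bounds \eqref{phi''infinity}--\eqref{phi'''infinity} and \eqref{psi''infinity} (proved by comparison arguments) to justify the $C^2$ convergence and the term-by-term computation of $\I[h^L_\delta]$ in \eqref{NLh}; your proposal assumes such bounds but does not indicate how to obtain them.
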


\subsection{Proof of Theorem \ref{hullprop}}
We will show that Theorem \ref{hullprop} follows from Proposition~\ref{hproperties}
and the comparison principle.

Fix $\eta>0$ and let $L=L_0-\eta$. By (i) of Proposition
\ref{hproperties}, there exists $\delta_0=\delta_0(\eta)>0$ such
that for any $\delta\in(0, \delta_0)$ we have
\begin{equation}\label{NLhL_0}
NL_{L_0}^{\overline{\lam}_\delta^{L}}[h_\delta^L]=NL_{L}^{\overline{\lam}_\delta^{L}}[h_\delta^L]-\delta^{2s}\eta<0\quad\text{in
}\R.
\end{equation}
Let us consider the function $\widetilde{u}(\tau,y)$, defined by
$$\widetilde{u}(\tau,y) = h_\delta^L(\delta p_0 y + \overline{\lambda}^L_\delta \tau).$$
By (ii) of Proposition \ref{hproperties}, we have
\begin{equation}\label{propubarra}|\widetilde{u}(\tau,y)-\delta p_0 y - \overline{\lam}_\delta^L\tau|\leq \lceil
C\rceil,\end{equation} 
{ where $\lceil C\rceil$ is the ceil integer part of $C$.}
Moreover, by \eqref{NLhL_0} and \eqref{propubarra}, $\widetilde{u}$
satisfies
$$\left\{%
\begin{array}{ll}
   \widetilde{u}_\tau\leq \delta^{2s} L_0 + \I[\widetilde{u}]-W'(\widetilde{u})& \hbox{in } \R^+\times\R\\
    \widetilde{u}(0,y) \leq \delta p_0 y+ \lceil C\rceil & \hbox{on } \R.\\
\end{array}%
\right.$$
 Let $u(\tau,y)$ be the solution of \eqref{sv}, { with $p=\delta p_0$ and $L=\delta^{2s} L_0$}, whose existence is ensured by Theorem
\ref{ergodic}.
 Then from the comparison principle and the
periodicity of $W$, we deduce that
$$\widetilde{u}(\tau,y)\leq u(\tau,y)+\lceil C\rceil.$$
By the previous inequality and \eqref{propubarra}, we
get
$$\overline{\lam}_\delta^L\tau\leq u(\tau,y)-\delta p_0 y +2\lceil
C\rceil,$$ and dividing by $\tau$ and letting $\tau$ go to
$+\infty$, we finally obtain
$$\delta^{1+2s}c_0|p_0|(L_0-\eta)=\overline{\lam}_\delta^L\leq \overline{H}(\delta p_0,\delta^{2s}
L_0).$$Similarly, it is possible to show that
$$\overline{H}(\delta p_0,\delta^{2s}
L_0)\leq \delta^{1+2s}c_0|p_0|(L_0+\eta).$$We have proved that for any
$\eta>0$ there exists $\delta_0=\delta_0(\eta)>0$ such that for
any $\delta\in(0,\delta_0)$ we have
$$\left|\frac{\overline{H}(\delta p_0,\delta^{2s}
L_0)}{\delta^{1+2s}}-c_0|p_0|L_0\right|\leq c_0|p_0|\eta,$$ i.e.
\eqref{orowan}, as desired.

\subsection{Preliminary results}\mbox{ } \bigskip

Under the assumptions  (\ref{W}) on $W$, there exists a unique solution of \eqref{phi}
which is of class  $C^{2,\beta}$, as shown in  \cite{cs}, see also \cite{psv}. 
When $s<\frac{1}{2}$ we suppose in addition that $W$ is even. This implies that the function 
$$\phi-\frac{1}{2}$$ is odd.
The existence of
a solution of class $C^{1,\beta}_{loc}(\R)\cap L^\infty(\R)$ of the problem \eqref{psi} is
proved  in \cite{psv}.
Actually, the regularity of $W$ implies that $\phi\in
C^{4,\beta}(\R)$ and $\psi\in C^{3,\beta}(\R)$.

To prove Proposition \ref{hproperties} we need several preliminary
results. We first state the following two lemmata about the
behavior of the functions $\phi$ and $\psi$ at infinity. We denote
by $H(x)$ the Heaviside function defined by
\begin{equation*}
H(x)=\begin{cases}1 &\text{for }x\geq 0 \\ 0& \text{for
}x<0.\end{cases}
\end{equation*}Then we have
\begin{lem}[Behavior of $\phi$]\label{phiinfinitylem}Assume \eqref{W}. Let $\phi$ be the solution of
\eqref{phi}, then there exists a  constant $K_1 >0$ such that 
\begin{equation}\label{phiinfinity}\left|\phi(x)-H(x)+\frac{1}{2s\al 
}\frac{x}{|x|^{1+2s}}\right|\leq \frac{K_1}{|x|^{1+2s}},\quad\text{for }|x|\geq 1,\quad\text{if }s\ge\frac{1}{2},
\end{equation}
\begin{equation}\label{phiinfinitys<1/2}\left|\phi(x)-H(x)\right|\leq \frac{K_1}{|x|^{2s}},\quad\text{for }|x|\geq 1,\quad\text{if }s<\frac{1}{2},
\end{equation}and for any $x\in\R$, $s\in(0,1)$,
\begin{equation}\label{phi'infinity}0<
\phi'(x)\leq\frac{K_1}{1+|x|^{1+2s}},\end{equation}
\begin{equation}\label{phi''infinity}
|\phi''(x)|\leq\frac{K_1}{1+|x|^{1+2s}},\end{equation}
\begin{equation}\label{phi'''infinity}
|\phi'''(x)|\leq\frac{K_1}{1+|x|^{1+2s}}.\end{equation}
\end{lem}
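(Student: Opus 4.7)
All the stated estimates are variants of results already established for layer solutions of nonlocal semilinear equations (see \cite{cs, psv, dpv}), and the plan is to adapt those arguments to the current normalization. The derivative bounds \eqref{phi'infinity}--\eqref{phi'''infinity} follow by differentiating the equation $\I[\phi]=W'(\phi)$ and exploiting that, since $W''(0)=W''(1)=\alpha>0$ by \eqref{W} and periodicity, and since $\phi\to 0,1$ at $\mp\infty$, the linearized equation $\I[\phi']=W''(\phi)\phi'$ approaches $(\I-\alpha)u=0$ at infinity. Comparing $\phi'$ with a suitable multiple of $(1+|x|^2)^{-(1+2s)/2}$, which behaves as a supersolution of this limit equation thanks to the polynomial decay of the Green kernel of $\alpha-\I$, yields \eqref{phi'infinity}; the strict positivity $\phi'>0$ is built into \eqref{phi}. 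Differentiating the equation further and iterating this scheme, together with the $C^{4,\beta}(\R)$ regularity of $\phi$ recalled just after the statement, produces \eqref{phi''infinity} and \eqref{phi'''infinity}.

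For the asymptotics \eqref{phiinfinity} and \eqref{phiinfinitys<1/2}, the plan is to set $v:=\phi-H$ and compute $\I[H]$ explicitly in closed form: since $H$ is a step function, the principal value integral can be evaluated by direct integration, producing $\I[H](x)=-c_s\,\frac{x}{|x|^{1+2s}}$ for $x\neq 0$, with $c_s>0$ coming from the normalization of $\I=-(-\Delta)^s$. Hence $v$ solves
\begin{equation*}
\I[v](x)=W'(\phi(x))+c_s\,\frac{x}{|x|^{1+2s}}\quad\text{in }\R\setminus\{0\}.
\end{equation*}
Since $W'(0)=W'(1)=0$, a Taylor expansion of $W'$ around $H(x)$ with a second-order remainder yields the asymptotic relation $(\I-\alpha)v=c_s\,\frac{x}{|x|^{1+2s}}+O(v^2)$ at infinity. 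For $s\ge 1/2$, the natural guess is $v\simeq-\frac{1}{2s\alpha}\frac{x}{|x|^{1+2s}}$, and a barrier argument with super- and subsolutions of the form $-\frac{1}{2s\alpha}\frac{x}{|x|^{1+2s}}\pm K|x|^{-(1+2s)}$, using the derivative estimates \eqref{phi'infinity}--\eqref{phi'''infinity} to bootstrap from a rough preliminary decay bound, yields \eqref{phiinfinity}, along the lines of \cite{dpv}. For $s<1/2$, this bootstrap breaks down because the candidate correction term no longer has enough integrability against the singular kernel of $\I$; one can only extract the weaker $O(|x|^{-2s})$ bound \eqref{phiinfinitys<1/2} through a coarser barrier controlling $|v|$ directly. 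The evenness of $W$ imposed in this range by \eqref{W} is essential here, as it forces $\phi-\tfrac{1}{2}$ to be odd and therefore rules out unwanted intermediate-order contributions with the wrong parity.

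The main obstacle is the passage from the formal ansatz to a rigorous pointwise bound on $v$: the nonlocal operator $\I$ propagates errors in a delicate way and the algebraic (as opposed to exponential) decay rates leave little margin. For $s\ge 1/2$ this is precisely the difficulty addressed in \cite{dpv}, and the argument transfers to the present setting since in dimension one the anisotropic kernel $g$ in \eqref{slapla} reduces to two positive constants and does not alter the radial asymptotic computation. For $s<1/2$ one must settle for the weaker asymptotic, which turns out to be compatible with the second-order ansatz introduced later in \eqref{eq::s-s100s<1/2}, thanks to the cutoff $\tau_R$ that is built into that construction.
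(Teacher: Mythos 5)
The part of this lemma that the paper actually proves is \eqref{phi''infinity} and \eqref{phi'''infinity}; the estimates \eqref{phiinfinity}, \eqref{phiinfinitys<1/2} and \eqref{phi'infinity} are simply quoted from \cite{dpv} and \cite{psv}, so your plan to re-derive them is more than is required (your Heaviside decomposition with $\I[H](x)=-c_s\,x|x|^{-1-2s}$ is indeed the starting point of the cited arguments). The genuine gap is in your barrier for the derivative bounds. You propose to compare $\phi'$ (and then, by iteration, $\phi''$ and $\phi'''$) with ``a suitable multiple of $(1+|x|^2)^{-(1+2s)/2}$'', claiming this is a supersolution of the limiting equation at infinity. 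But this profile decays exactly at the rate of the kernel tails: for large $|x|$ one has $\I[u](x)\simeq c\bigl(\int_\R u\bigr)|x|^{-1-2s}$, which is of the \emph{same} order as $\alpha u(x)$, so whether $\I[u]\le\alpha u$ holds depends on the constants and is not automatic. Moreover, a constant multiple cannot repair this, since the inequality $\I[u]\le\alpha u$ is invariant under positive constant multiples; the constant only helps on a compact set. What is needed is a dilation parameter, and this is precisely the device the paper uses: it compares $\phi''$ and $\phi'''$ with $C\phi_a'$, where $\phi_a'(x)=\phi'(x/a)$ solves exactly $\I[\phi_a']=a^{-2s}W''(\phi_a)\phi_a'$, so that for $\overline{\phi}=\phi''-C\phi_a'$ the zero-order term becomes $C\phi_a'\bigl(W''(\phi)-a^{-2s}W''(\phi_a)\bigr)$, which is positive outside a compact set once $a$ is large, while the source $W'''(\phi)(\phi')^2$ decays like $|x|^{-2(1+2s)}$ and is harmless. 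Either adopt this rescaled barrier (as in \cite{mp2}) or build an explicit dilation into yours; as written, the supersolution claim is unjustified and the comparison step would fail.

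A secondary inaccuracy: the evenness of $W$ for $s<\frac{1}{2}$ is \emph{not} needed for \eqref{phiinfinitys<1/2}; that bound holds for general $W$ satisfying \eqref{W} and is exactly what \cite{psv} proves. The oddness of $\phi-\frac{1}{2}$ enters later in the paper (Claim 2 and the estimate of the error $E_1$ in the proof of Lemma \ref{lem::1011}), where the decay $|x|^{-2s}$ alone is too weak and cancellations in the lattice sums are required --- not in this lemma. Apart from these points, your overall strategy (comparison arguments adapting \cite{cs}, \cite{psv}, \cite{dpv}, \cite{mp2}) is in the same spirit as the paper's.
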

\begin{proof}
Estimate \eqref{phiinfinity} is proved in \cite{dpv}, while estimates \eqref{phiinfinitys<1/2} and \eqref{phi'infinity} are proved  in \cite{psv}. 

Since the proof of \eqref{phi''infinity} and \eqref{phi'''infinity} is an adaptation of the one given in \cite{mp2} for the same estimates in the case $s=\frac{1}{2}$, we only sketch it.

To get  \eqref{phi''infinity}, as in the proof of Lemma 3.1 in \cite{mp2} one looks to the equations satisfied by $\overline{\phi}:=\phi''-C\phi_a'(x)$, where $\phi_a'(x):=\phi'\left(\frac{x}{a}\right)$, $a>0$:
$$\I[\overline{\phi}]-W''(\phi)\overline{\phi}=C\phi_a'\left(W''(\phi)-\frac{1}{a^{2s}}W''(\phi_a)\right)+W'''(\phi)(\phi')^2.$$
For $a$ and $R_1$ large enough, we can prove that in $\R\setminus[-R_1,R_1]$ we have
$$\I[\overline{\phi}]-W''(\phi)\overline{\phi}\geq 0\quad\text{and}\quad  W''(\phi)>0.$$
Choosing $C$ so large that $\overline{\phi}\leq 0$ on $[-R_1,R_1]$, the comparison principle implies $\overline{\phi}\leq0$ in $\R$, therefore $\phi''\leq C\phi_a'(x)$ in $\R$.
Similarly one can prove that $\phi''\geq-C\phi_a'(x)$ in $\R$, and using \eqref{phi'infinity},  \eqref{phi''infinity} follows. 

In the same way, comparing $\phi'''$ with $C\phi_a'(x)$, we get estimate  \eqref{phi'''infinity}.
\end{proof}

\begin{lem}[Behavior of $\psi$]\label{psiinfinitylem}Assume \eqref{W}. Let $\psi$ be the solution of
\eqref{psi}, then for any $L\in\R$  there exist  $K_2$
and $K_3>0$, depending on $L$ such that
\begin{equation}\label{psiinfinity}\left|\psi(x)-K_2\frac{x}{
|x|^{1+2s}}\right|\leq\frac{K_3}{|x|^{1+2s}},\quad\text{for }|x|\geq 1, \text{ if }s\geq\frac{1}{2}
\end{equation}and for any $s\in(0,1)$ and  $x\in\R$
\begin{equation}\label{psi'infinity}
|\psi'(x)|\leq \frac{K_3}{1+|x|^{1+2s}},
\end{equation}
\begin{equation}\label{psi''infinity}
|\psi''(x)|\leq \frac{K_3}{1+|x|^{1+2s}}.
\end{equation}
\end{lem}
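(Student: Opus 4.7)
The strategy is to treat \eqref{psi} as a linear perturbation of the constant-coefficient equation $\I[\psi] - \alpha\psi = 0$. By periodicity, $\alpha = W''(0) = W''(1) > 0$, and since $\phi(x) \to H(x)$ at $\pm\infty$, we rewrite \eqref{psi} in the form
$$\I[\psi] - \alpha \psi = f(x), \qquad f(x) := (W''(\phi) - \alpha)\psi + \frac{L}{\alpha}(W''(\phi) - \alpha) + c\phi'.$$
On the exterior of any fixed ball, the operator $\I - \alpha$ has a strictly positive zero-order coefficient and hence obeys a comparison principle in the usual fractional sense (see e.g.\ \cite{mp2}). This will be the main analytic tool for upgrading pointwise decay of the forcing into pointwise decay of $\psi$ and its derivatives.

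\textbf{Step 1: derivative estimates \eqref{psi'infinity} and \eqref{psi''infinity}.} Differentiating the equation, $\psi'$ solves
$$\I[\psi'] - \alpha\psi' = (W''(\phi) - \alpha)\psi' + W'''(\phi)\phi'\psi + \frac{L}{\alpha}W'''(\phi)\phi' + c\phi''.$$
Using Lemma~\ref{phiinfinitylem} and the boundedness of $\psi, \psi'$ (known from \cite{psv}), the right-hand side is bounded by $K/(1+|x|^{1+2s})$. Now, exactly as in the proof of \eqref{phi''infinity}-\eqref{phi'''infinity}, for $C, a$ sufficiently large the function $C\phi'(x/a)$ is a supersolution of the linear equation for $\pm\psi'$ outside a bounded set; the comparison principle then gives \eqref{psi'infinity}. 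Differentiating once more and repeating the barrier argument yields \eqref{psi''infinity}.

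\textbf{Step 2: leading asymptotic \eqref{psiinfinity} for $s \geq 1/2$.} Using the sharp expansion $\phi(x) - H(x) = -\frac{1}{2s\alpha}\frac{x}{|x|^{1+2s}} + O(|x|^{-(1+2s)})$ from \eqref{phiinfinity} together with Taylor expansion $W''(\phi) - \alpha = W'''(0)(\phi - H) + O(|\phi - H|^2)$, and observing that $|\phi - H|^2 = O(|x|^{-4s}) \subseteq O(|x|^{-(1+2s)})$ precisely when $s \geq 1/2$, we obtain
$$W''(\phi(x)) - \alpha = -\frac{W'''(0)}{2s\alpha}\frac{x}{|x|^{1+2s}} + O(|x|^{-(1+2s)}).$$
Since $\psi \to 0$ at infinity, $(W''(\phi) - \alpha)\psi$ is lower order, and since $c\phi' = O(|x|^{-(1+2s)})$ by \eqref{phi'infinity}, we conclude
$$f(x) = -\frac{L\,W'''(0)}{2s\alpha^2}\cdot \frac{x}{|x|^{1+2s}} + O(|x|^{-(1+2s)}).$$
Set $K_2 := \frac{L\,W'''(0)}{2s\alpha^3}$ and choose a smooth odd regularization $\eta(x)$ of $x/|x|^{1+2s}$ (e.g.\ $\eta(x) = x/(1+x^2)^{(1+2s)/2}$). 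Exploiting the oddness of $\eta$ against the evenness of the kernel $g$, one verifies by direct estimation of the principal value that $\I[\eta](x) = O(|x|^{-(1+2s)})$ at infinity, so the ansatz $\tilde\psi := K_2\eta$ satisfies $\I[\tilde\psi] - \alpha\tilde\psi = f + O(|x|^{-(1+2s)})$. Hence $\psi - \tilde\psi$ solves a linear equation whose forcing is $O(|x|^{-(1+2s)})$, and the same comparison argument with a rescaled barrier $C\phi'(x/a)$ as in Step~1 yields $|\psi(x) - K_2 \eta(x)| \le K_3/|x|^{1+2s}$ for $|x| \ge 1$, which is \eqref{psiinfinity}.

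\textbf{Main obstacle.} The delicate point is the asymptotic evaluation of $\I[\eta]$, since we must squeeze out decay faster than $\eta$ itself from a nonlocal operator acting on a slowly decaying function. A head-on computation is technically heavy; a cleaner route is to exploit the identity $\I[\phi] = W'(\phi)$, which provides a ``built-in'' asymptotic expansion of $\I$ acting on functions whose leading profile at infinity is $x/|x|^{1+2s}$, thereby sidestepping a direct evaluation of the principal value. A minor accessory difficulty is that $W''(\phi)$ may be negative on the bounded set where $\phi$ is far from $\{0,1\}$, but since all comparisons are carried out on the exterior of a large ball (inside which $\psi, \psi', \psi''$ are trivially bounded), this causes no issue.
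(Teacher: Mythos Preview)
Your Step 1 (the derivative bounds \eqref{psi'infinity}, \eqref{psi''infinity}) matches the paper exactly: differentiate the equation and compare $\pm\psi'$, $\pm\psi''$ with a large multiple of a rescaled $\phi'$.

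For the leading asymptotic \eqref{psiinfinity}, you and the paper share the same overall scheme---subtract off a function with the correct $x/|x|^{1+2s}$ profile, then show the remainder is $O(|x|^{-(1+2s)})$ by comparison with $C\phi'(x/d)$---but you choose different comparison functions. You take an explicit $\eta(x)=x/(1+x^2)^{(1+2s)/2}$ and \emph{assert} that $\I[\eta](x)=O(|x|^{-(1+2s)})$. The paper instead subtracts $\phi_a-\phi_b$ with $a,b>0$ chosen so that $a^{2s}-b^{2s}=-\tfrac{L}{\alpha^2}W'''(0)$; by \eqref{phiinfinity} this has the right $x/|x|^{1+2s}$ leading term, and---crucially---the layer equation gives $\I[\phi_a-\phi_b]=a^{-2s}W'(\phi_a)-b^{-2s}W'(\phi_b)$ for free, so no principal value needs to be estimated at all.

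Your assertion on $\I[\eta]$ is plausible (heuristically, $\I$ acting on a function asymptotically homogeneous of degree $-2s$ should produce decay like $|x|^{-4s}\le |x|^{-(1+2s)}$ when $s\ge 1/2$), but the oddness/evenness remark alone does not deliver it; one needs a careful splitting of the integral near $|y|\sim|x|$, which is exactly the ``technically heavy'' computation you flag in your own Main Obstacle paragraph. As written, this is a gap. To close it, either carry out that direct estimate in full, or---as you yourself suggest---replace $\eta$ by a combination of rescaled $\phi$'s so that the equation $\I[\phi]=W'(\phi)$ does the work. The latter is precisely the paper's route.
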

\begin{proof}
We  follow the proof of Lemma 3.2 in \cite{mp2}.   Let us start with the proof of \eqref{psiinfinity}. Since we want to point out where we use $s\geq\frac{1}{2}$, we give it in the details. For $a>0$ we denote
$\phi_a(x):=\phi\left(\frac{x}{a}\right)$, which is solution of
$$\I[\phi_a]=\frac{1}{a^{2s}}W'(\phi_a)\quad\text{in }\R.$$ In what follows, we denote $\widetilde{\phi}(x)=\phi(x)-H(x).$
Let $a$ and $b$ be positive numbers, then making a Taylor
expansion of the derivatives of $W$ (remind $W'(0)=0$), we get
\begin{equation*}\begin{split}
\I[\psi-(\phi_a-\phi_b)]&=W''(\phi)\psi+\frac{L}{\al}(W''(\phi)-W''(0))+c\phi'+\left(\frac{1}{b^{2s}}W'(\phi_b)-\frac{1}{a^{2s}}W'(\phi_a)\right)\\&=
W''(\phi)(\psi-(\phi_a-\phi_b))+W''(\widetilde{\phi})(\phi_a-\phi_b)+\frac{L}{\al}(W''(\widetilde{\phi})-W''(0))\\&+c\phi'
+\left(\frac{1}{b^{2s}}W'(\widetilde{\phi}_b)-\frac{1}{a^{2s}}W'(\widetilde{\phi}_a)\right)\\&
=W''(\phi)(\psi-(\phi_a-\phi_b))+W''(0)(\phi_a-\phi_b)+\frac{L}{\al}W'''(0)\widetilde{\phi}
+c\phi'\\&+W''(0)\left(\frac{1}{b^{2s}}\widetilde{\phi}_b-\frac{1}{a^{2s}}\widetilde{\phi}_a\right)
+(\phi_a-\phi_b)O(\widetilde{\phi})+O(\widetilde{\phi})^2+O(\widetilde{\phi}_a)^2+
O(\widetilde{\phi}_b)^2.
\end{split}\end{equation*}
Then the function
$\overline{\psi}=\psi-(\phi_a-\phi_b)$ satisfies
\begin{equation*}\begin{split}\I[\overline{\psi}]-W''(\phi)\overline{\psi}&=\al(\phi_a-\phi_b)+\frac{L}{\al}W'''(0)\widetilde{\phi}+c\phi'
+\al\left(\frac{1}{b^{2s}}\widetilde{\phi}_b-\frac{1}{a^{2s}}\widetilde{\phi}_a\right)\\&+(\phi_a-\phi_b)O(\widetilde{\phi})+O(\widetilde{\phi})^2
+O(\widetilde{\phi}_a)^2+O(\widetilde{\phi}_b)^2.\end{split}\end{equation*}
We want to estimate the right-hand side of the last equality. By
Lemma \ref{phiinfinitylem}, for $|x|\geq\max\{1,|a|,|b|\}$ we have
\beqs\begin{split}\al(\phi_a-\phi_b)+\frac{L}{\al}W'''(0)\widetilde{\phi}&\geq- \frac{x}{2s
|x|^{1+2s}}\left[(a^{2s}-b^{2s})+\frac{L}{\al^2}W'''(0)\right]\\&-\frac{K_1\al}{
|x|^{1+2s}}\left(a^{1+2s}+b^{1+2s}+\frac{|L|}{\al^2}|W'''(0)|\right).\end{split}\eeqs Choose
$a,b>0$ such that $(a^{2s}-b^{2s})+\frac{L}{\al^2}W'''(0)=0$, then
$$\al(\phi_a-\phi_b)+\frac{L}{\al}W'''(0)\widetilde{\phi}\geq
-\frac{C}{|x|^{1+2s}},$$ for $|x|\geq\max\{1,|a|,|b|\}$. Here and in what
follows, as usual $C$ denotes various positive constants. From
Lemma \ref{phiinfinitylem} we also derive that
$$\al\left(\frac{1}{b^{2s}}\widetilde{\phi}_b-\frac{1}{a^{2s}}\widetilde{\phi}_a\right)\geq
-\frac{C}{|x|^{1+2s}},$$ $${\mbox{and }} \qquad c\phi'\geq -\frac{C}{1+|x|^{1+2s}}.$$  
Moreover, since $s\geq\frac{1}{2}$, we have 
$$(\phi_a-\phi_b)O(\widetilde{\phi})+O(\widetilde{\phi})^2+O(\widetilde{\phi}_a)^2+O(\widetilde{\phi}_b)^2\geq
-\frac{C}{1+|x|^{4s}}\geq -\frac{C}{1+|x|^{1+2s}},$$ for $|x|\geq\max\{1,|a|,|b|\}$. Then we
conclude that there exists $R_1>0$ such that for $|x|\geq R_1$ we have
$$\I[\overline{\psi}]-W''(\phi)\overline{\psi}\geq
-\frac{C}{1+|x|^{1+2s}}.$$ 

\noindent Now, let us consider the function
$\phi'_d(x)=\phi'\left(\frac{x}{d}\right)$, $d>0$, which is
solution of
$$\I[\phi_d']=\frac{1}{d^{2s}}W''(\phi_d)\phi_d'\quad\text{in }\R,$$ and denote
$$\overline{\overline{\psi}}=\overline{\psi}-\widetilde{C}\phi_d',$$with $\widetilde{C}>0$. Then, for $|x|\geq R_1$ we have
\beqs\begin{split}\I[\overline{\overline{\psi}}]&\geq
W''(\phi)\overline{\psi}-\frac{\widetilde{C}}{d^{2s}}W''(\phi_d)\phi_d'-\frac{C}{1+|x|^{1+2s}}\\&=W''(\phi)\overline{\overline{\psi}}
+\widetilde{C}\phi_d'\left(W''(\phi)-\frac{1}{d^{2s}}W''(\phi_d)\right)-\frac{C}{1+|x|^{1+2s}}.\end{split}\eeqs
Let us choose $d>0$ and $R_2>R_1$ such that
\begin{equation*}\left\{%
\begin{array}{ll}W''(\phi)-\frac{1}{d^{2s}}W''(\phi_d)>\frac{1}{2}W''(0)>0&\text{in }\R\setminus[-R_2,R_2];\\
W''(\phi)>0&\text{on }\R\setminus[-R_2,R_2],\\
\end{array}%
\right.\end{equation*} then from \eqref{phi'infinity},  for $\widetilde{C}$ large enough we get
\begin{equation*}\I[\overline{\overline{\psi}}]-W''(\phi)\overline{\overline{\psi}}\geq
0\quad\text{on }\R\setminus[-R_2,R_2].\end{equation*} 
Choosing $\widetilde{C}$ such that moreover
$$\overline{\overline{\psi}}<0\quad \text{on
}[-R_2,R_2],$$
we can ensure that $\overline{\overline{\psi}}\leq 0$ on $\R$.
Indeed, assume by contradiction that there exists $x_0\in
\R\setminus[-R_2,R_2]$ such that
$$\overline{\overline{\psi}}(x_0)=\sup_{\R}\overline{\overline{\psi}}>0.$$Then
$$\left\{%
\begin{array}{ll}
    \I[\overline{\overline{\psi}},x_0]\leq 0; \\
    \I[\overline{\overline{\psi}},x_0]-W''(\phi(x_0))\overline{\overline{\psi}}(x_0)\geq
0; \\
W''(\phi(x_0))>0,\\
\end{array}%
\right.
$$from which $$\overline{\overline{\psi}}(x_0)\leq 0,$$ a
contradiction. Therefore, $\overline{\overline{\psi}}\leq 0$ on $\R$ which implies, with together \eqref{phiinfinity} and \eqref{phi'infinity},   
$$\psi\leq \frac{K_2x}{|x|^{1+2s}}+\frac{K_3}{|x|^{1+2s}}\quad\text{for }|x|\geq1.$$
Looking at the function $\psi
-(\phi_a-\phi_b)+\widetilde{C}\phi_d'$, we conclude similarly that
$$\psi\geq \frac{K_2x}{|x|^{1+2s}}-\frac{K_3}{|x|^{1+2s}}\quad\text{for }|x|\geq1,$$ and \eqref{psiinfinity} is proved.

Now let us turn to \eqref{psi'infinity}. By deriving the first
equation in \eqref{psi}, we see that the function $\psi'$  which
is bounded and of class $C^{2,\beta}$, is a solution of
$$\I[\psi']=W''(\phi)\psi'+W'''(\phi)\phi'\psi+\frac{L}{\al}W'''(\phi)\phi'+c\phi''\quad\text{in }\R.$$
 Then the function $\overline{\psi}'=\psi'-C\phi_a'$, satisfies
\begin{equation*}\begin{split}\I[\overline{\psi}']-W''(\phi)\overline{\psi}'&=C\phi'_a\left(W''(\phi)-\frac{1}{a^{2s}}W''(\phi_a)\right)+W'''(\phi)\phi'\psi
+\frac{L}{\al}W'''(\phi)\phi'+c\phi''\\&=C\phi'_a\left(W''(\phi)-\frac{1}{a^{2s}}W''(\phi_a)\right)+O\left(\frac{1}{1+|x|^{1+2s}}\right),
\end{split}\end{equation*}by \eqref{phi'infinity} and  \eqref{phi''infinity}
and as before we deduce that for $C$ and $a$ large enough
$\overline{\psi}'\leq 0$ on $\R$, which implies that $\psi'\leq
\frac{K_3}{1+|x|^{1+2s}}$. The inequality $\psi'\geq -\frac{K_3}{1+|x|^{1+2s}}$
is obtained similarly by proving that
$\overline{\psi}'+C\phi'_a\geq 0$ on $\R$.

Similarly, estimate \eqref{psi''infinity} is obtained by comparing  $\psi''$ with $C\phi'_a$ for some large $a$ and $C$ and using 
\eqref{phi'infinity}, \eqref{phi''infinity} and \eqref{phi'''infinity}. 
\end{proof}

\subsection{Proof of Proposition \ref{hproperties}}\label{s-s4}
{\mbox{ }} \bigskip

For simplicity of notation we denote (for the rest of the paper)
$$x_i=\frac{x-i}{\delta|p_0|},\quad\widetilde{\phi}(z)=\phi(z)-H(z).$$
 Then we have the following six claims
(whose proofs are  postponed to the end of the section).\\
\bigskip

\noindent {\bf Claim 1:} {\em Let $x=i_0+\gamma$, with $i_0\in\Z$
and $\gamma\in\left(-\frac{1}{2},\frac{1}{2}\right]$, then there exist numbers $\theta_i\in(-1,1)$ such that 
\begin{equation*}\SUM_{i=-n\atop i\neq
i_0}^n\frac{x-i}{|x-i|^{1+2s}}\rightarrow -4s\gamma\SUM_{i=1}^{
+\infty}\frac{(i-\theta_i \gamma)^{2s-1}}{(i+\gamma)^{2s}(i-\gamma)^{2s}}\quad\text{as
}n\rightarrow+\infty,\end{equation*}
\begin{equation*}
\SUM_{i=-n\atop }^{i_0-1}\frac{1}{|x-i|^{1+2s}}\rightarrow
\SUM_{i=1}^{+\infty}\frac{1}{(i+\gamma)^{1+2s}}\quad\text{as
}n\rightarrow+\infty,\end{equation*}
\begin{equation*}\SUM_{i=i_0+1\atop }^{n}\frac{1}{|x-i|^{1+2s}}\rightarrow
\SUM_{i=1}^{+\infty}\frac{1}{(i-\gamma)^{1+2s}}\quad\text{as
}n\rightarrow+\infty.\end{equation*}}\\
We remark that the three series on the right hand side above   converge uniformly for $\gamma\in\left(-\frac{1}{2},\frac{1}{2}\right]$
and $\theta_i\in(-1,1)$ since behave like the series  $\SUM_{i=1}^{+\infty}\frac{1}{i^{1+2s}}$.
\bigskip

\noindent {\bf Claim 2:} {\em Assume $s<\frac{1}{2}$. Let $x=i_0+\gamma$, with $i_0\in\Z$
and $\gamma\in\left(-\frac{1}{2},\frac{1}{2}\right]$, then

\beq\label{phisums<1/2estim}\LIM_{n\rightarrow\infty} \left|\SUM_{i=-n\atop
i\neq i_0}^n[\widetilde{\phi}(x_i)]^{2k-1}\right|\leq Ck\delta^{2s(2k-1)}|\gamma|
\eeq and 

\beq\label{Itausum}\SUM_{i=-n\atop
i\neq i_0,i_0\pm1}^n|\I[\tau,x_i]|\leq C\delta^{2s}.
\eeq\\}

\noindent {\bf Claim 3:} {\em For any $x\in\R$ the sequence
$\{s_{\delta,n}^L(x)\}_n$ converges as $n\rightarrow+\infty$.}\\

\noindent {\bf Claim 4:} {\em The sequence
$\{(s_{\delta,n}^L)'\}_n$ converges on $\R$ as
$n\rightarrow+\infty$, uniformly on compact sets.} \\

\noindent {\bf Claim 5:} {\em The sequence
$\{(s_{\delta,n}^L)''\}_n$ converges
on $\R$ as $n\rightarrow+\infty$, uniformly on compact sets.}\\

\noindent {\bf Claim 6:} {\em For any $x\in\R$ the sequence
$\SUM_{i=-n}^n\I[s_{\delta,n}^L,x_i]$ 
converges as $n\rightarrow+\infty$.}\\

With these claims, we are in the position of completing
the proof of Proposition \ref{hproperties}, by arguing as follows.\medskip

\noindent {\bf Proof of ii)}\\
When  $s\geq\frac{1}{2}$, (ii) is a consequence of \eqref{sdeltan-xbddclaim3} in the proof of Claim 3.

Next, let us assume $s<\frac{1}{2}$.
Let   $x=i_0+\gamma$ with $i_0\in\Z$ and
$\gamma\in\left(-\frac{1}{2},\frac{1}{2}\right]$. 
For $n>|i_0|$, we have
\beqs\begin{split}\SUM _{i=-n}^{i=n}\phi(x_i)-n-x&=\SUM _{i=-n}^{i=n}\phi(x_i)-n-i_0-\gamma\\&
=\SUM_{i=-n}^{i_0-1}(\phi(x_i)-1)+\phi(x_{i_0})+\SUM_{i=i_0+1}^{n}\phi(x_i)-\gamma\\&
= \SUM_{i=-n\atop i\neq i_0}^{i=n}\widetilde{\phi}(x_i)+\phi(x_{i_0})-\gamma.
\end{split}\end{equation*} 
Then by \eqref{phisums<1/2estim} with $k=1$
\beq\label{phi-xbdds<12}\left|\SUM _{i=-n}^{i=n}\phi(x_i)-n-x\right|\leq C,\eeq with $C$ independent of $x$. 
Finally, for $i\neq i_0-1,i_0,i_0+1$ and $R=\frac{1}{2\delta|p_0|}$
$$|x_i|=\frac{|i_0+\gamma-i|}{\delta|p_0|}\geq \frac{3}{2\delta|p_0|}>2R,$$ therefore $\tau(x_i)=0$. This implies that $\SUM_{i=-n}^{n}\psi(x_i)\tau(x_i)$ is actually the  sum of only three terms and therefore 
\beq\label{psibdds<12}\left|\SUM _{i=-n}^{i=n}\psi(x_i)\tau(x_i)\right|\leq 3\|\psi\|_\infty.\eeq
Estimates \eqref{phi-xbdds<12} and \eqref{psibdds<12} imply (ii).
\medskip

\noindent {\bf Proof of i)}\\
The function
$h_\delta^L(x)=\LIM_{n\rightarrow+\infty}s_{\delta,n}^L(x)$ is
well defined for any $x\in\R$ by Claim 3. Moreover, by Claims 4 and
5 and classical analysis results, it is of class $C^2$ on $\R$
with
$$(h_\delta^{L})'(x)=\LIM_{n\rightarrow+\infty}(s_{\delta,n}^L)'(x),$$
$$(h_\delta^{L})''(x)=\LIM_{n\rightarrow+\infty}(s_{\delta,n}^L)''(x),$$
and the
convergence of $\{s_{\delta,n}^L\}_n$, $\{(s_{\delta,n}^L)'\}_n$
and $\{(s_{\delta,n}^L)''\}_n$ is uniform on compact sets.

Finally, as in \cite{mp2} (see  Section 4), we have
 for any $x\in\R$
\begin{equation}\label{NLh}\I[h_\delta^L,x]=\LIM_{n\rightarrow+\infty}\I[s_{\delta,n}^L,x].\end{equation}

To conclude the proof of Proposition \ref{hproperties}, we only have to prove \eqref{nl=odelta}, which is a consequence of the estimates above and the following lemma.


\begin{lem}{\bf (First asymptotics)}\label{lem::1011}
We have
$$\LIM_{n\rightarrow+\infty}NL_L^{\overline{\lam}_\delta^L}[s_{\delta,n}^L](x)=o(\delta^{2s})\quad\text{as }\delta\rightarrow0$$ 
where $\LIM_{\delta\rightarrow0}\frac{o(\delta^{2s})}{\delta^{2s}}=0$, uniformly for $x\in\R$. 
\end{lem}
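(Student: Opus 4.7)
The plan is to expand $NL_L^{\overline{\lam}_\delta^L}[s_{\delta,n}^L](x)$ term by term, exploiting (i) the scaling of $\I$, (ii) the equations satisfied by $\phi$ and $\psi$, and (iii) Taylor expansions of $W'$ around a suitable base point, then passing $n\to +\infty$ via Claims 3--6. The key observation is the scaling identity: since the kernel $g$ is even, for any fixed $i$,
$$\I[\phi(\cdot_i)](x)=(\delta|p_0|)^{-2s}\I[\phi](x_i)=(\delta|p_0|)^{-2s}W'(\phi(x_i)),$$
and analogously, using \eqref{psi} with $L_0$ replaced by $L$ (so $c=c_0 L$),
$$\I[\psi(\cdot_i)](x)=(\delta|p_0|)^{-2s}\left[W''(\phi(x_i))\psi(x_i)+\tfrac{L}{\al}(W''(\phi(x_i))-\al)+c_0L\,\phi'(x_i)\right].$$
Hence $\delta^{2s}|p_0|^{2s}\I[s_{\delta,n}^L](x)$ reduces (ignoring for now the cut-off $\tau$ for $s<1/2$) to $\SUM_i W'(\phi(x_i))+\delta^{2s}\SUM_i[W''(\phi(x_i))\psi(x_i)+\tfrac{L}{\al}(W''(\phi(x_i))-\al)+c_0L\,\phi'(x_i)]$. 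Moreover $(s_{\delta,n}^L)'(x)=(\delta|p_0|)^{-1}[\SUM_i\phi'(x_i)+\delta^{2s}\SUM_i\psi'(x_i)]$, so the drift contributes $\overline{\lam}_\delta^L(s_{\delta,n}^L)'=\delta^{2s}c_0L\SUM_i\phi'(x_i)+O(\delta^{4s})$, and the $c_0L\SUM_i\phi'(x_i)$ term cancels exactly against the corresponding piece coming from $\psi$.

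Write $x=i_0+\gamma$ with $\gamma\in(-1/2,1/2]$ and split $g(x):=\SUM_i\phi(x_i)-n=\phi(x_{i_0})+E(x)$ with $E(x):=\SUM_{i\neq i_0}\widetilde{\phi}(x_i)$, and Taylor-expand
$$W'(s_{\delta,n}^L)=W'(g)+W''(g)\left(\tfrac{\delta^{2s}L}{\al}+\delta^{2s}\SUM_i\psi(x_i)\right)+O(\delta^{4s}),$$
$$W'(g)=W'(\phi(x_{i_0}))+W''(\phi(x_{i_0}))E+\tfrac{1}{2}W'''(\phi(x_{i_0}))E^2+O(|E|^3).$$
The first piece $W'(\phi(x_{i_0}))$ combined with $\SUM_{i\neq i_0}W'(\phi(x_i))=\SUM_{i\neq i_0}W''(\phi(x_i))\widetilde{\phi}(x_i)+O(\widetilde{\phi}(x_i)^2)$ must be compared with the non-local sum $\SUM_i W'(\phi(x_i))$, and the resulting remainder is controlled by Claim 1 (and Claim 2 when $s<1/2$) together with the decay estimates of Lemma \ref{phiinfinitylem}. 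At order $\delta^{2s}$, the explicit $-\delta^{2s}L$ is matched by $W''(\phi(x_{i_0}))\tfrac{\delta^{2s}L}{\al}\approx\delta^{2s}L$ plus the correction $-\tfrac{\delta^{2s}L}{\al}\SUM_i(W''(\phi(x_i))-\al)$ absorbed by $W''$--Taylor pieces; the choice $c_0=1/\int_\R(\phi')^2$ is precisely the one that eliminates the residual leading term.

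The main obstacle is the case $s<1/2$: the estimate \eqref{phiinfinitys<1/2} only gives $\widetilde{\phi}(x)=O(|x|^{-2s})$, so the naive sum $E(x)$ is of order $(\delta|p_0|)^{2s}\SUM_{i\neq i_0}|i-i_0-\gamma|^{-2s}$ which diverges; moreover $\psi$ need not decay fast enough for $\SUM_i\psi(x_i)$ to converge, hence the cut-off $\tau$ in \eqref{eq::s-s100s<1/2}. Here the evenness of $W$ forces $\phi-\tfrac12$ to be odd, so the leading $|x|^{-2s}$ parts of $\widetilde{\phi}(x_i)$ and $\widetilde{\phi}(x_{2i_0-i})$ cancel by antisymmetry and what remains is the linear-in-$\gamma$ bound \eqref{phisums<1/2estim} of Claim 2, giving $|E(x)|\le C\delta^{2s}|\gamma|$, which is exactly what is needed for the cross terms $\al E$ and $W''(\phi(x_{i_0}))E$ to be $o(\delta^{2s})$. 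The extra error introduced by the localization $\tau$ is of the form $\delta^{2s}\SUM_{i\neq i_0,i_0\pm 1}|\I[\tau,x_i]|$, which is bounded by $C\delta^{4s}$ via \eqref{Itausum}. All remaining higher-order Taylor remainders are $O(\delta^{4s})$ uniformly in $x$ by boundedness of $\phi,\psi$ and of the $C^{4,\beta}$ potential $W$, and passing $n\to +\infty$ is justified by Claims 3--6. This yields $NL_L^{\overline{\lam}_\delta^L}[s_{\delta,n}^L](x)=o(\delta^{2s})$ uniformly in $x\in\R$.
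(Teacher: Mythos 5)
Your strategy is the same as the paper's: expand $NL_L^{\overline{\lam}_\delta^L}[s_{\delta,n}^L]$ using the scaling of $\I$, the equations \eqref{phi} and \eqref{psi}, Taylor expansions of $W'$, and then control the residual sums via Claims 1--2 and the decay estimates of Lemmas \ref{phiinfinitylem}--\ref{psiinfinitylem}. However, two steps of your sketch would fail as written, both in the regime $s<\frac12$. First, your blanket claim that ``all remaining higher-order Taylor remainders are $O(\delta^{4s})$ uniformly in $x$'' is false for $s\le\frac14$: by \eqref{phiinfinitys<1/2} one only has $|\widetilde\phi(x_i)|\le C(\delta|p_0|)^{2s}|x-i|^{-2s}$, so the quadratic remainder $\SUM_{i\neq i_0}\widetilde\phi(x_i)^2$ behaves like $\delta^{4s}\SUM_{i\ge1} i^{-4s}$, which diverges when $4s\le 1$. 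This is exactly why the paper needs $W$ even not only for the linear sum (your Claim 2 cancellation) but also for the error $E_1$: evenness kills all even powers in the expansion of $W'(\widetilde\phi(x_i))$ about $0$, and the surviving odd powers are summed one by one via \eqref{phisums<1/2estim} up to an order $2k_0+1$ with $2s(2k_0+1)>1$ (the paper's Step 6''). Without this your remainder estimate has no meaning for small $s$, so the gap is a missing idea, not just a missing computation.

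Second, your accounting of the localization error is incomplete. You reduce it to $\delta^{2s}\SUM_{i\neq i_0,i_0\pm1}|\I[\tau,x_i]|\le C\delta^{4s}$ via \eqref{Itausum}, but the replacement of $\psi$ by $\psi\tau$ also produces contributions at the three indices $i_0,i_0\pm1$, where $\tau(x_i)\neq 0$ and \eqref{Itausum} says nothing. The paper handles these through the product formula \eqref{Iproductformula}, $\I[\psi\tau,x_i]=\tau(x_i)\I[\psi,x_i]+\psi(x_i)\I[\tau,x_i]-B(\psi,\tau)(x_i)$, and then proves separately (Steps 4'--6') that $\I[\psi,x_{i_0\pm1}]$, $\psi(x_{i_0})\I[\tau,x_{i_0}]$ and the bilinear terms $B(\psi,\tau)(x_{i_0}),B(\psi,\tau)(x_{i_0\pm1})$ are $o(1)$ as $\delta\to0$ (note $|x_{i_0\pm1}|$ is comparable to $R=\frac1{2\delta|p_0|}$, i.e.\ these points sit at the edge of the cutoff, so explicit integral estimates with the choice of $R$ and a case analysis in $\gamma$ are required); these terms are only $o(\delta^{2s})$, not $O(\delta^{4s})$, and they are not covered by anything you invoke. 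A smaller imprecision, worth fixing but not fatal: the terms $\al E$ and $W''(\phi(x_{i_0}))E$ are each only $O(\delta^{2s}|\gamma|)=O(\delta^{2s})$; it is their combination $(W''(\phi(x_{i_0}))-\al)E$ that is $o(\delta^{2s})$, and proving this requires the dichotomy $|\gamma|\lessgtr\delta|p_0|$ (smallness of $\widetilde\phi(x_{i_0})$ when $|\gamma|\ge\delta|p_0|$), as in the paper's Steps 3 and 3', not the bound $|E|\le C\delta^{2s}|\gamma|$ alone.
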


Now we can conclude the proof of (i). Indeed, by Claim 3, Claim 4
and \eqref{NLh}, for any $x\in\R$
$$NL_L^{\overline{\lam}_\delta^L}[h_\delta^L](x)=\LIM_{n\rightarrow+\infty}NL_L^{\overline{\lam}_\delta^L}[s_{\delta,n}^L](x),$$
and Lemma \ref{lem::1011} implies that
$$NL_L^{\overline{\lam}_\delta^L}[h_\delta^L](x)=o(\delta^{2s}),\quad\text{as
}\delta\rightarrow0,$$where
$\LIM_{\delta\rightarrow0}\frac{o(\delta^{2s})}{\delta^{2s}}=0$, uniformly
for $x\in\R$. 
\bigskip

\noindent {\bf Proof of Lemma \ref{lem::1011}.}\\

Let us first assume $s\geq\frac{1}{2}$.\\

\noindent {\bf Step 1: First computation}\\
Fix $x\in\R$, let $i_0\in\Z$ and
$\gamma\in\left(-\frac{1}{2},\frac{1}{2}\right]$ be such that
$x=i_0+\gamma$, let $\frac{1}{\delta|p_0|}\geq2$ and $n>|i_0|$.
Then we have 

\begin{equation*}\begin{split}
A:= \quad &NL_L^{\overline{\lam}_\delta^L}[s_{\delta,n}^L](x)\\
\\
=\quad &\frac{\overline{\lam}_\delta^L}{\delta|p_0|}\SUM_{i=-n}^n\left[\phi'(x_i)+\delta^{2s}
\psi'(x_i)\right]-\SUM_{i=-n}^n\left[\I[\phi,x_i]+\delta^{2s}\I[\psi,x_i]\right]\\&+W'\left(\frac{L\delta^{2s}}{\al}
+\SUM_{i=-n}^n\left[\phi(x_i)+\delta^{2s}
\psi(x_i)\right]\right)-\delta^{2s} L
\end{split}\end{equation*}
where we have used the definitions and the periodicity of $W$.
Using the equation (\ref{phi}) satisfied by $\phi$, we can rewrite it as

\begin{equation*}\begin{split}
A=\quad &\frac{\overline{\lam}_\delta^L}{\delta|p_0|}\left\{\phi'(x_{i_0})+\delta^{2s}
\psi'(x_{i_0})+\SUM_{i=-n\atop i\neq i_0}^n\left[\phi'(x_i)+\delta^{2s}
\psi'(x_i)\right]\right\} -\SUM_{i=-n\atop i\neq
i_0}^nW'(\widetilde{\phi}(x_i))\\
&-\delta^{2s}\SUM_{i=-n\atop
i\neq i_0}^n\I[\psi,x_{i}]-\delta^{2s}\I[\psi,x_{i_0}]\\&+W'\left(\frac{L\delta^{2s}}{\al}
+\SUM_{i=-n}^n\left[\widetilde{\phi}(x_i)+\delta^{2s}
\psi(x_i)\right]\right)-W'(\widetilde{\phi}(x_{i_0}))-\delta^{2s}
L.
\end{split}\end{equation*}
Using the Taylor expansion of $W'$ (remind that $W'(0)=0$) and the definition of $\overline{\lam}_\delta^L$, we get  
\begin{equation*}\begin{split}
A=\quad &\delta^{2s} c_0L\left\{\phi'(x_{i_0})+\delta^{2s}
\psi'(x_{i_0})+\SUM_{i=-n\atop i\neq i_0}^n\left[\phi'(x_i)+\delta^{2s}
\psi'(x_i)\right]\right\}\\&-W''(0)\SUM_{i=-n\atop i\neq
i_0}^n\widetilde{\phi}(x_i)-\delta^{2s}\SUM_{i=-n\atop i\neq
i_0}^n\I[\psi,x_{i}]
-\delta^{2s}\I[\psi,x_{i_0}]\\&+W''(\phi(x_{i_0}))\left(\frac{L\delta^{2s}}{\al}
+\delta^{2s} \psi(x_{i_0})+\SUM_{i=-n\atop i\neq
i_0}^n\left[\widetilde{\phi}(x_i)+\delta ^{2s}\psi(x_i)\right]\right)-\delta^{2s} L +E
\end{split}\end{equation*}
with the error term
$$E=E_1+E_2,$$ where 
$$E_1=-\SUM_{i=-n\atop i\neq i_0}^nW'(\widetilde{\phi}(x_i))+W''(0)\SUM_{i=-n\atop i\neq
i_0}^n\widetilde{\phi}(x_i)$$ 
and
 $$E_2=O\left(\frac{L\delta^{2s}}{\al}
+\delta^{2s} \psi(x_{i_0})+\SUM_{i=-n\atop i\neq
i_0}^n\left[\widetilde{\phi}(x_i)+\delta^{2s}
\psi(x_i)\right]\right)^2.$$
Simply reorganizing the terms, we get with $c=c_0 L$:

\begin{equation*}\begin{split}
A=\quad &\delta^{2s}  c\left\{\delta^{2s} 
\psi'(x_{i_0})+\SUM_{i=-n\atop i\neq i_0}^n\left[\phi'(x_i)+\delta^{2s} 
\psi'(x_i)\right]\right\}-W''(0)\SUM_{i=-n\atop i\neq
i_0}^n\widetilde{\phi}(x_i)-\delta^{2s} \SUM_{i=-n\atop i\neq
i_0}^n\I[\psi,x_{i}]\\
&+W''(\phi(x_{i_0}))\left(\SUM_{i=-n\atop
i\neq i_0}^n\left[\widetilde{\phi}(x_i)+\delta^{2s} 
\psi(x_i)\right]\right)\\
&+\delta^{2s} \Big(-\I[\psi,x_{i_0}]+W''(\phi(x_{i_0}))\psi(x_{i_0})+\frac{L}{\al}W''(\phi(x_{i_0}))-L+c\phi'(x_{i_0})\Big)+E.
\end{split}\end{equation*}

Using equation (\ref{psi}) satisfied by $\psi$, we get 

\begin{equation*}\begin{split}
A=\quad &\delta^{2s} c\left\{\delta^{2s} \psi'(x_{i_0})+\SUM_{i=-n\atop i\neq
i_0}^n\left[\phi'(x_i)+\delta^{2s}
\psi'(x_i)\right]\right\}+(W''(\phi(x_{i_0}))-W''(0))\SUM_{i=-n\atop
i\neq i_0}^n\widetilde{\phi}(x_i)\\
&-\delta^{2s}\SUM_{i=-n\atop i\neq
i_0}^n\I[\psi,x_{i}]+W''(\phi(x_{i_0}))\delta^{2s}\SUM_{i=-n\atop i\neq
i_0}^n \psi(x_i)+E.
\end{split}\end{equation*}

Let us bound the second term  of the last equality, uniformly in
$x$.\\

\noindent {\bf Step 2: Bound on $\SUM_{i=-n\atop i\neq i_0}^n\left[\phi'(x_i)+\delta^{2s}
\psi'(x_i)\right]$}\\

 From \eqref{phi'infinity} and \eqref{psi'infinity} it follows
that
\beqs\begin{split}&-\delta^{1+4s}|p_0|^{1+2s} K_3\SUM_{i=-n\atop
i\neq i_0}^n\frac{1}{|x-i|^{1+2s}}\leq\SUM_{i=-n\atop i\neq
i_0}^n\left[\phi'(x_i)+\delta^{2s} \psi'(x_i)\right]\\&\leq
\delta^{1+2s}|p_0|^{1+2s}(K_1+\delta^{2s} K_3)\SUM_{i=-n\atop i\neq
i_0}^n\frac{1}{|x-i|^{1+2s}},\end{split}\eeqs and then by Claim 1 we get
\begin{equation}\label{NL1}-C\delta^{1+4s}\leq\LIM_{n\rightarrow+\infty}\SUM_{i=-n\atop i\neq i_0}^n\left[\phi'(x_i)+\delta^{2s}
\psi'(x_i)\right]\leq C\delta^{1+2s}.\end{equation} Here and
henceforth, $C$ denotes various positive constants independent of
$x$.\\
 

\noindent {\bf Step 3: Bound on $(W''(\phi(x_{i_0}))-W''(0))\SUM_{i=-n\atop
i\neq i_0}^n\widetilde{\phi}(x_i)$}\\

Let us prove that
\begin{equation}\label{NL2}\LIM_{n\rightarrow+\infty}\left|(W''(\phi(x_{i_0}))-W''(0))\SUM_{i=-n\atop
i\neq i_0}^n\widetilde{\phi}(x_i)\right|\leq C\delta^{1+2s}.
\end{equation}
By \eqref{phiinfinity} we have
\begin{equation}\label{nlphi1}\begin{split}\left|\SUM_{i=-n\atop i\neq
i_0}^n\widetilde{\phi}(x_i)+\frac{\delta^{2s}|p_0|^{2s}}{2s\al}\SUM_{i=-n\atop
i\neq i_0}^n\frac{x-i}{|x-i|^{1+2s}}\right|\leq
K_1\delta^{1+2s}|p_0|^{1+2s}\SUM_{i=-n\atop i\neq i_0}^n\frac{1}{|x-i|^{1+2s}}.
\end{split}\end{equation} If $|\gamma|\geq \delta|p_0|$  then $|x_{i_0}|=\frac{|\gamma|}{ \delta|p_0|}\geq 1$ and 
again from \eqref{phiinfinity}, $$|\widetilde{\phi}(x_{i_0})
+\frac{\delta^{2s}|p_0|^{2s}}{2s\al}\frac{\gamma}{|\gamma|^{1+2s}}|\leq
K_1\frac{\delta^{1+2s}|p_0|^{1+2s}}{|\gamma|^{1+2s}}$$ which implies that
$$|W''(\widetilde{\phi}(x_{i_0}))-W''(0)|\leq
|W'''(0)\widetilde{\phi}(x_{i_0})|+O(\widetilde{\phi}(x_{i_0}))^2\leq
C\frac{\delta^{2s}}{|\gamma|^{2s}}+C\frac{\delta^{1+2s}}{|\gamma|^{1+2s}}.$$ By the
previous inequality, \eqref{nlphi1} and Claim 1 we deduce that
\begin{equation*}\begin{split}\LIM_{n\rightarrow+\infty}\left|(W''(\phi(x_{i_0}))-W''(0))\SUM_{i=-n\atop
i\neq i_0}^n\widetilde{\phi}(x_i)\right|&\leq
C\left(\frac{\delta^{2s}}{|\gamma|^{2s}}+\frac{\delta^{1+2s}}{|\gamma|^{1+2s}}\right)(\delta^{2s}|\gamma|+\delta^{1+2s})\\&\leq C\delta^{1+2s}
\end{split}\end{equation*}where $C$ is independent of $\gamma$.

Finally, if $|\gamma|<\delta|p_0|$, from \eqref{nlphi1} and Claim
1 we conclude that
\begin{equation*}\left|\LIM_{n\rightarrow+\infty}(W''(\phi(x_{i_0}))-W''(0))\SUM_{i=-n\atop
i\neq i_0}^n\widetilde{\phi}(x_i)\right|\leq
C\delta^{2s}|\gamma|+C\delta^{1+2s}\leq C\delta^{1+2s},
\end{equation*}and \eqref{NL2} is proved.\\

\noindent {\bf Step 4: Bound on $\delta^{2s}\SUM_{i=-n\atop i\neq
i_0}^n\I[\psi,x_{i}]$}\\

We compute
\begin{equation}\begin{split}\label{i1psi}\I[\psi]&=W''(\widetilde{\phi})\psi+\frac{L}{\al}(W''(\widetilde{\phi})-W''(0))+c\phi'\\&
=W''(0)\psi+\frac{L}{\al}W'''(0)\widetilde{\phi}+O(\widetilde{\phi})\psi+O(\widetilde{\phi})^2+c\phi'.\end{split}\end{equation}
Estimates \eqref{phiinfinity} and \eqref{psiinfinity}  implies that the sequences $$\SUM_{i=-n\atop i\neq
i_0}^nO(\widetilde{\phi}(x_{i}))\psi(x_{i}),\quad\SUM_{i=-n\atop i\neq
i_0}^nO(\widetilde{\phi}(x_{i}))^2$$  behave like the series $\SUM_{i=1}^{\infty}\frac{1}{i^{4s}}$, therefore they are convergent since $s\geq\frac{1}{2}$. 
Moreover, by \eqref{i1psi}, \eqref{phiinfinity}, \eqref{phi'infinity},
\eqref{psiinfinity} and Claim 1, we have
\begin{equation}\label{NL3}\left|\LIM_{n\rightarrow+\infty}\delta^{2s}\SUM_{i=-n\atop i\neq
i_0}^n\I[\psi,x_{i}]\right|\leq C(\delta^{4s}+\delta^{1+2s})\leq C\delta^{1+2s}.
\end{equation}\\

\noindent {\bf Step 5: Bound on $W''(\phi(x_{i_0}))\delta^{2s}\SUM_{i=-n\atop
i\neq i_0}^n \psi(x_i)$}\\
Similarly, from \eqref{psiinfinity} and Claim 1 we get 
\begin{equation}\label{NL4}\left|\LIM_{n\rightarrow+\infty}W''(\phi(x_{i_0}))\delta^{2s}\SUM_{i=-n\atop
i\neq i_0}^n \psi(x_i)\right|\leq C\delta^{1+2s}.
\end{equation}

\noindent {\bf Step 6: Bound on the error $E$}\\
Finally, again from \eqref{phiinfinity}, \eqref{psiinfinity}
 and Claim 1 it follows
that
\begin{equation}\label{NL5}\left|\LIM_{n\rightarrow+\infty}E_2\right|=\left|\LIM_{n\rightarrow+\infty}O\left(\frac{L\delta^{2s}}{\al} +\delta^{2s}
\psi(x_{i_0})+\SUM_{i=-n\atop i\neq
i_0}^n\left[\widetilde{\phi}(x_i)+\delta^{2s}
\psi(x_i)\right]\right)^2\right|\leq C\delta^{4s}\leq C\delta^{1+2s}.
\end{equation}
Next, let us  estimate $E_1$. From  \eqref{phiinfinity}  and using $s\geq\frac{1}{2}$, we have 
\beq\label{NL5'} |E_1|\leq\SUM_{i=-n\atop i\neq i_0}^n|W'(\widetilde{\phi}(x_i))-W''(0)\widetilde{\phi}(x_i)|=\SUM_{i=-n\atop i\neq i_0}^n|O(\widetilde{\phi}(x_i))^2|\leq C\delta^{4s}\leq C\delta^{1+2s}.\eeq

\noindent {\bf Step 7: Conclusion}\\
Therefore, from \eqref{NL1}, \eqref{NL2}, \eqref{NL3}, \eqref{NL4}, \eqref{NL5} and
\eqref{NL5'} we conclude that
$$-C\delta^{1+2s}\leq \LIM_{n\rightarrow+\infty}NL_L^{\overline{\lam}_\delta^L}[s_{\delta,n}^L]\leq C\delta^{1+2s}$$ 
with $C$ independent of $x$ and
Lemma \ref{lem::1011} for $s\geq\frac{1}{2}$ is
proved.\\


Now, let us turn to the case $s<\frac{1}{2}$.\\

\noindent {\bf Step 1': First computation}\\

Making computations like in Step 1, we get
\begin{equation*}\begin{split}
A&:=NL_L^{\overline{\lam}_\delta^L}[s_{\delta,n}^L](x)\\& 
=\delta^{2s} c\left\{\delta^{2s} (\psi\tau)'(x_{i_0})+\SUM_{i=-n\atop i\neq
i_0}^n\left[\phi'(x_i)+\delta^{2s}
(\psi\tau)'(x_i)\right]\right\}\\&+(W''(\phi(x_{i_0}))-W''(0))\SUM_{i=-n\atop
i\neq i_0}^n\widetilde{\phi}(x_i)\\&-\delta^{2s}\SUM_{i=-n\atop i\neq
i_0}^n\I[\psi\tau,x_{i}]+W''(\phi(x_{i_0}))\delta^{2s}\SUM_{i=-n\atop i\neq
i_0}^n (\psi\tau)(x_i)
\\
&+\delta^{2s} \Big(-\I[\psi\tau,x_{i_0}]+W''(\phi(x_{i_0}))(\psi\tau)(x_{i_0})+\frac{L}{\al}W''(\phi(x_{i_0}))-L+c\phi'(x_{i_0})\Big)+
E,
\end{split}\end{equation*} where again $E=E_1+E_2$ with
$E_1$  the error term coming from in the approximation of  $\SUM_{i=-n\atop i\neq i_0}^nW'(\widetilde{\phi}(x_i))$ with $W''(0)\SUM_{i=-n\atop i\neq
i_0}^n\widetilde{\phi}(x_i)$,
and
 $$E_2=O\left(\frac{L\delta^{2s}}{\al}
+\delta^{2s} \psi(x_{i_0})+\SUM_{i=-n\atop i\neq
i_0}^n\left[\widetilde{\phi}(x_i)+\delta^{2s}
\psi(x_i)\tau(x_i)\right]\right)^2.$$
 To control the term $\I[\psi\tau,x_{i}]$, we use the following formula 
 which can be found for instance in \cite{bpsv} page 7:
\beq\label{Iproductformula} \I[\psi\tau,x_i]=\tau(x_i) \I[\psi,x_i]+\psi(x_i) \I[\tau,x_i]-B(\psi,\tau)(x_i),\eeq

where
$$B(\psi,\tau)(x_i)=C(s)\int_\R\frac{(\psi(y)-\psi(x_i))(\tau(y)-\tau(x_i))}{|x_i-y|^{1+2s}}dy.$$
Therefore the quantity $A$ can be rewritten in the following way:

\begin{equation*}\begin{split}
A=\quad &\delta^{2s} c\left\{\delta^{2s} (\psi\tau)'(x_{i_0})+\SUM_{i=-n\atop i\neq
i_0}^n\left[\phi'(x_i)+\delta^{2s}
(\psi\tau)'(x_i)\right]\right\}\\&+(W''(\phi(x_{i_0}))-W''(0))\SUM_{i=-n\atop
i\neq i_0}^n\widetilde{\phi}(x_i)\\&-\delta^{2s}\SUM_{i=-n\atop i\neq
i_0}^n\I[\psi\tau,x_{i}]+W''(\phi(x_{i_0}))\delta^{2s}\SUM_{i=-n\atop i\neq
i_0}^n (\psi\tau)(x_i)
\\
&+\delta^{2s} \Big(-\tau(x_{i_0})\I[\psi,x_{i_0}]+W''(\phi(x_{i_0}))(\psi\tau)(x_{i_0})+\frac{L}{\al}W''(\phi(x_{i_0}))-L+c\phi'(x_{i_0})\Big)\\&
+\delta^{2s}B(\psi,\tau)(x_{i_0})-\delta^{2s} \psi(x_{i_0})\I[\tau,x_{i_0}]+
E.
\end{split}\end{equation*}
Now, we
remark that $$|x_{i_0}|=\frac{|\gamma|}{\delta |p_0|}\leq\frac{1}{2\delta |p_0|}=R,$$ then by \eqref{tauproper} $\tau(x_{i_0})=1$. Therefore, using the equation satisfied by $\psi$ 
\eqref{psi}, we get 

$$-\tau(x_{i_0})\I[\psi,x_{i_0}]+W''(\phi(x_{i_0}))(\psi\tau)(x_{i_0})+\frac{L}{\al}W''(\phi(x_{i_0}))-L+c\phi'(x_{i_0})=0$$ and consequently 
\begin{equation*}\begin{split}
A=\quad &\delta^{2s} c\left\{\delta^{2s} (\psi\tau)'(x_{i_0})+\SUM_{i=-n\atop i\neq
i_0}^n\left[\phi'(x_i)+\delta^{2s}
(\psi\tau)'(x_i)\right]\right\}\\&+(W''(\phi(x_{i_0}))-W''(0))\SUM_{i=-n\atop
i\neq i_0}^n\widetilde{\phi}(x_i)\\&-\delta^{2s}\SUM_{i=-n\atop i\neq
i_0}^n\I[\psi\tau,x_{i}]+W''(\phi(x_{i_0}))\delta^{2s}\SUM_{i=-n\atop i\neq
i_0}^n (\psi\tau)(x_i)
\\
&+\delta^{2s}B(\psi,\tau)(x_{i_0})-\delta^{2s} \psi(x_{i_0})\I[\tau,x_{i_0}]+
E.
\end{split}\end{equation*}

Let us proceed to the estimate of $A$.\\

\noindent {\bf Step 2': Bound on $\SUM_{i=-n\atop i\neq i_0}^n\left[\phi'(x_i)+\delta^{2s}
(\psi\tau)'(x_i)\right]$}\\

As in Step 2, using \eqref{phi'infinity} and Claim 1, we get 
\begin{equation}\label{NL1s<1/2}0\leq\LIM_{n\rightarrow+\infty}\SUM_{i=-n\atop i\neq i_0}^n\phi'(x_i)\leq C\delta^{1+2s}.\end{equation} 

Next, for $i\neq i_0-1,i_0,i_0+1$, and $R=\frac{1}{2\delta|p_0|}$
$$|x_i|=\frac{|i_0+\gamma-i|}{\delta|p_0|}\geq \frac{3}{2\delta|p_0|}>2R,$$ therefore $\tau(x_i)=\tau'(x_i)=0$. Then, using \eqref{psi'infinity} and the fact that $\LIM_{x\rightarrow \pm \infty}\psi(x)=0$, we get
\beq\label{NL1psis<1/2} \begin{split}\delta^{2s}\SUM_{i=-n\atop i\neq i_0}^n (\psi\tau)'(x_i)&=\delta^{2s}(\psi\tau)'(x_{i_0-1})+\delta^{2s}(\psi\tau)'(x_{i_0+1})\\&
=\delta^{2s}(\psi\tau)'\left(\frac{-1+\gamma}{\delta|p_0|}\right)+\delta^{2s}(\psi\tau)'\left(\frac{1+\gamma}{\delta|p_0|}\right)
\\&=o(\delta^{2s}).\end{split}\eeq\\

\noindent {\bf Step 3': Bound on $(W''(\phi(x_{i_0}))-W''(0))\SUM_{i=-n\atop
i\neq i_0}^n\widetilde{\phi}(x_i)$}\\

From \eqref{phisums<1/2estim} with $k=1$ we know that 

$$\LIM_{n\rightarrow+\infty}\left|\SUM_{i=-n\atop
i\neq i_0}^n\widetilde{\phi}(x_i)\right|\leq C\delta^{2s}|\gamma|.$$
As in Step 3  if $|\gamma|\geq \delta|p_0|$, then \eqref{phiinfinitys<1/2}  implies
$$|\widetilde{\phi}(x_{i_0})|\leq C\frac{\delta^{2s}}{|\gamma|^{2s}},$$ and so, using that  $W'''(0)=0$
$$|W''(\widetilde{\phi}(x_{i_0}))-W''(0)|\leq C\frac{\delta^{4s}}{|\gamma|^{4s}}.$$
Then we have
$$\LIM_{n\rightarrow+\infty}\left|(W''(\phi(x_{i_0}))-W''(0))\SUM_{i=-n\atop
i\neq i_0}^n\widetilde{\phi}(x_i)\right|\leq C\frac{\delta^{4s}}{|\gamma|^{4s}}\delta^{2s}|\gamma|\leq C\delta^{4s}.$$
Finally, if $|\gamma|<\delta|p_0|$, then 
$$\LIM_{n\rightarrow\infty} \left|\SUM_{i=-n\atop
i\neq i_0}^n\widetilde{\phi}(x_i)\right|\leq C\delta^{2s}|\gamma|\leq C\delta^{1+2s}.$$ We conclude that 
\beq\label{NL2s<1/2} \LIM_{n\rightarrow+\infty}\left|(W''(\phi(x_{i_0}))-W''(0))\SUM_{i=-n\atop
i\neq i_0}^n\widetilde{\phi}(x_i)\right|\leq C\delta^{4s}.\eeq

\noindent {\bf Step 4': Bound on $\delta^{2s}\SUM_{i=-n\atop i\neq
i_0}^n\I[\psi\tau,x_{i}]$}\\

Using formula \eqref{Iproductformula}, we see that 
\beq\label{ipsitausplitproduct}\delta^{2s}\SUM_{i=-n\atop i\neq
i_0}^n\I[\psi\tau,x_{i}]=\delta^{2s}\SUM_{i=-n\atop i\neq
i_0}^n\left\{\tau(x_{i})\I[\psi,x_{i}]+\psi(x_{i})\I[\tau,x_{i}]-B(\psi,\tau)(x_{i})\right\}.\eeq
As we have already pointed out in Step 2', for $i\neq i_0-1,i_0,i_0+1$, $\tau(x_i)=0$, therefore 
$$\delta^{2s}\SUM_{i=-n\atop i\neq
i_0}^n\tau(x_{i})\I[\psi,x_{i}]=\delta^{2s}\tau(x_{i_0-1})\I[\psi,x_{i_0-1}]+\delta^{2s}\tau(x_{i_0+1})\I[\psi,x_{i_0+1}].$$
We point out that $$x_{i_0-1}=\frac{-1+\gamma}{\delta|p_0|}\rightarrow -\infty\quad\text{as }\delta\rightarrow 0$$ and 
$$x_{i_0+1}=\frac{1+\gamma}{\delta|p_0|}\rightarrow+\infty\quad\text{as }\delta\rightarrow 0.$$ 
Then from the equation \eqref{psi}, estimates \eqref{phiinfinity}, \eqref{phi'infinity} and $\LIM_{x\rightarrow\pm\infty}\psi(x)=0$, we deduce that $\I[\psi,x_{i_0-1}]$ and $\I[\psi,x_{i_0+1}]$ are $o(1)$ as $\delta\rightarrow 0$, this implies that 
\beq\label{tauxiIpsi}\delta^{2s}\SUM_{i=-n\atop i\neq
i_0}^n\tau(x_{i})\I[\psi,x_{i}]=o(\delta^{2s})\quad\text{as }\delta\rightarrow 0.\eeq
Similarly, from the behavior of $\psi$ at infinity we infer  that 
$$\delta^{2s}\psi(x_{i_0-1})\I[\tau,x_{i_0-1}],\,\delta^{2s}\psi(x_{i_0+1})\I[\tau,x_{i_0+1}]\quad\text{are }o(\delta^{2s})\quad\text{as }\delta\rightarrow 0.$$
This and \eqref{Itausum} imply that 
\beq\label{psixiItau} \delta^{2s}\SUM_{i=-n\atop i\neq
i_0}^n \psi(x_{i})\I[\tau,x_{i}]=o(\delta^{2s})\quad\text{as }\delta\rightarrow 0.\eeq

Let us now consider the term $\delta^{2s}\SUM_{i=-n\atop i\neq
i_0}^nB(\psi,\tau)(x_{i}).$ For $i\neq i_0-1,i_0,i_0+1$, using that $\tau(x_i)=0$, we have 
$$|B(\psi,\tau)(x_i)|\leq C(s)\int_\R\frac{|\psi(y)-\psi(x_i)|\tau(y)}{|x_i-y|^{1+2s}}dy\leq C\int_\R\frac{\tau(y)}{|x_i-y|^{1+2s}}dy=C\I[\tau,x_{i}].$$
Therefore, from \eqref{Itausum} we infer that 
\beq\label{Btau,psiineq} \delta^{2s}\SUM_{i=-n\atop i\neq
i_0,i_0\pm 1}^n|B(\psi,\tau)(x_{i})|\leq C\delta^{4s}.\eeq
Next, we remark that  for $\gamma\in\left(-\frac{1}{2},\frac{1}{2}\right]$ and $R=\frac{1}{2\delta|p_0|}$, either $x_{i_0-1}\in [-2R,-R]$ or $x_{i_0+1}\in(R,2R]$.
Suppose for instance that $x_{i_0-1}\in [-2R,-R]$ (i.e. $0\leq\gamma\leq\frac{1}{2}$). We have

\beqs\begin{split}B(\psi,\tau)(x_{i_0-1})&=C(s)\int_\R\frac{(\psi(y)-\psi(x_{i_0-1}))(\tau(y)-\tau(x_{i_0-1}))}{|x_{i_0-1}-y|^{1+2s}}dy\\&
=C(s)\int_{-2R}^{2R}\frac{(\psi(y)-\psi(x_{i_0-1}))\tau(y)}{|x_{i_0-1}-y|^{1+2s}}dy-C\tau(x_{i_0-1}))\I[\psi,x_{i_0-1}].
\end{split}\eeqs
We have already pointed out that $\I[\psi,x_{i_0-1}]=o(1)$ as $\delta\rightarrow0$. Let us consider the first term of the right-hand side of the last equality. 
Using that $R=\frac{1}{2\delta|p_0|}$, $x_{i_0-1}=\frac{-1+\gamma}{2\delta|p_0|}\in [-2R,-R]$ and estimate \eqref{psi'infinity}, we get 
\beqs\begin{split} \left|\int_{-2R}^{2R}\frac{(\psi(y)-\psi(x_{i_0-1}))\tau(y)}{|x_{i_0-1}-y|^{1+2s}}dy\right|&\leq 
\max_{[-2R,-R/2]}\psi'\int_{-2R}^{-\frac{R}{2}}\frac{1}{|x_{i_0-1}-y|^{2s}}dy\\&+C\int_{-\frac{R}{2}}^{2R}\frac{1}{|x_{i_0-1}-y|^{1+2s}}dy\\&
=C\max_{[-2R,-R/2]}\psi'\left[(x_{i_0-1}+2R)^{1-2s}+\left(-\frac{R}{2}-x_{i_0-1}\right)^{1-2s}\right]\\&
+C\left[\frac{1}{(2R-x_{i_0-1})^{2s}}-\frac{1}{(-\frac{R}{2}-x_{i_0-1})^{2s}}\right]\\&
\leq C\delta^{2s}.
\end{split}\eeqs
 We conclude that
 \beq\label{Btau,psii0-1}\delta^{2s}B(\psi,\tau)(x_{i_0-1})=o(\delta^{2s})\quad\text{as }\delta\rightarrow 0.\eeq
 Similarly we can prove that 
  \beq\label{Btau,psii0+1}\delta^{2s}B(\psi,\tau)(x_{i_0+1})=o(\delta^{2s})\quad\text{as }\delta\rightarrow 0.\eeq
  Estimates \eqref{Btau,psiineq}, \eqref{Btau,psii0-1} and \eqref{Btau,psii0+1} imply
  \beq\label{Btau,psi}\delta^{2s}\SUM_{i=-n\atop i\neq
i_0}^nB(\psi,\tau)(x_{i})=o(\delta^{2s})\quad\text{as }\delta\rightarrow 0.\eeq
In conclusion, putting together \eqref{ipsitausplitproduct}, \eqref{tauxiIpsi}, \eqref{psixiItau}  and \eqref{Btau,psi} we get
\beq\label{Ipsitauxineqest} \delta^{2s}\SUM_{i=-n\atop i\neq
i_0}^n\I[\psi\tau,x_{i}]=o(\delta^{2s})\quad\text{as }\delta\rightarrow 0.\eeq\\

\noindent {\bf Step 5': Bound on $W''(\phi(x_{i_0}))\delta^{2s}\SUM_{i=-n\atop
i\neq i_0}^n (\psi\tau)(x_i)$}\\

As in Step 2', using that  $\tau(x_i)=0$ for $i\neq i_0-1,i_0,i_0+1$ and that $\LIM_{x\rightarrow \pm \infty}\psi(x)=0$, we get
\beq\label{psitauxineq}\delta^{2s}\SUM_{i=-n\atop
i\neq i_0}^n (\psi\tau)(x_i)=
\delta^{2s}(\psi\tau)(x_{i_0-1})+\delta^{2s}(\psi\tau)(x_{i_0+1})=o(\delta^{2s})\quad\text{as }\delta\rightarrow 0.\eeq\\

\noindent {\bf Step  6': Bound on $\delta^{2s}B(\psi,\tau)(x_{i_0})-\delta^{2s} \psi(x_{i_0})\I[\tau,x_{i_0}]$}
Remember that $$x_{i_0}=\frac{\gamma}{\delta|p_0|},\quad|\gamma|\leq\frac{1}{2}.$$
Let us first assume $|\gamma|\leq\frac{1}{4},$ then $$|x_{i_0}|\leq \frac{1}{4\delta|p_0|}=\frac{R}{2},$$ and
\beqs\begin{split}\left| \I[\tau,x_{i_0}]\right|&=C\left| \int_\R\frac{\tau(y)-1}{|y-x_{i_0}|^{1+2s}}dy\right|\\&
= C\left|\int_{|y|> R}\frac{\tau(y)-1}{|y-x_{i_0}|^{1+2s}}dy\right|\\&
\leq C\int_{|y|> R}\frac{1}{|y-x_{i_0}|^{1+2s}}dy\\&
=\frac{C}{(x_{i_0}+R)^{2s}}+\frac{C}{(R-x_{i_0})^{2s}}\\&
\leq \frac{C}{R^{2s}}\\&=C\delta^{2s}.
\end{split}\eeqs
Then 
\beqs \delta^{2s} |\psi(x_{i_0})\I[\tau,x_{i_0}]|\leq C\delta^{4s}.\eeqs
Now let us assume $|\gamma|>\frac{1}{4}$. In this case $ \psi(x_{i_0})=o(1)$ as $\delta\rightarrow 0$, with $o(1)$ independent of $\gamma$ and then $ \delta^{2s} \psi(x_{i_0})\I[\tau,x_{i_0}]=o(\delta^{2s})$
 as $\delta\rightarrow 0$. We conclude that for any $\gamma\in\left(-\frac{1}{2},\frac{1}{2}\right]$ we have
\beq\label{psiitaixi0} \delta^{2s} \psi(x_{i_0})\I[\tau,x_{i_0}]=o(\delta^{2s})\quad\text{as }\delta\rightarrow 0.\eeq

Finally, let us consider the term $\delta^{2s}B(\psi,\tau)(x_{i_0})$. Again, if  $|\gamma|\leq\frac{1}{4},$ then 
\beqs\begin{split} \delta^{2s}|B(\psi,\tau)(x_{i_0})|&=\delta^{2s}C(s)\left|\int_\R\frac{(\psi(y)-\psi(x_{i_0}))(\tau(y)-1)}{|y-x_{i_0-1}|^{1+2s}}dy\right|\\&
\leq \delta^{2s}C\int_{|y|> R}\frac{1}{|y-x_{i_0}|^{1+2s}}dy\\&\leq C\delta^{4s}.\end{split}\eeqs
If $|\gamma|>\frac{1}{4},$ then either $x_{i_0}\in\left[-R,-\frac{R}{2}\right]$ or  $x_{i_0}\in\left[\frac{R}{2},R\right]$. 
Suppose for instance $x_{i_0}\in\left[-R,-\frac{R}{2}\right]$, then computations similar to those done in Step 5' for $B(\psi,\tau)(x_{i_0-1})$, show that $B(\psi,\tau)(x_{i_0})=o(1)$ as $\delta\rightarrow 0$.
We conclude that for any $\gamma\in\left(-\frac{1}{2},\frac{1}{2}\right]$ we have
\beq\label{Bpsitaixi0} \delta^{2s}B(\psi,\tau)(x_{i_0})=o(\delta^{2s})\quad\text{as }\delta\rightarrow 0.\eeq

\noindent {\bf Step  6'': Bound on the error $E$}\\

From \eqref{phisums<1/2estim} with $k=1$, and the fact that $\tau(x_i)=0$ for $i\neq i_0-1,i_0,i_0+1$ 
 it follows
that
\begin{equation}\label{NL5s<1/2}\left|\LIM_{n\rightarrow+\infty}E_2\right|=\left|\LIM_{n\rightarrow+\infty}O\left(\frac{L\delta^{2s}}{\al} +\delta^{2s}
\psi(x_{i_0})+\SUM_{i=-n\atop i\neq
i_0}^n\left[\widetilde{\phi}(x_i)+\delta^{2s}
\psi(x_i)\tau(x_i)\right]\right)^2\right|\leq C\delta^{4s}.
\end{equation}
Next, let us  estimate $E_1$. Remember that for $s<\frac{1}{2}$ we assume $W$ even, this  implies $W^{2k-1}(0)=0$ for any integer $k\geq 1$. Therefore
\beqs\begin{split} -E_1&=\SUM_{i=-n\atop i\neq i_0}^nW'(\widetilde{\phi}(x_i))-W''(0)\widetilde{\phi}(x_i)\\&
=W^{IV}(0)\SUM_{i=-n\atop i\neq i_0}^n(\widetilde{\phi}(x_i))^3+W^{VI}(0)\SUM_{i=-n\atop i\neq i_0}^n(\widetilde{\phi}(x_i))^5+...+
W^{2k_0}(0)\SUM_{i=-n\atop i\neq i_0}^n(\widetilde{\phi}(x_i))^{2k_0-1}\\&+\SUM_{i=-n\atop i\neq i_0}^nO((\widetilde{\phi}(x_i))^{2k_0+1}).\end{split}\eeqs
Fix $k_0$ such that $2s(2k_0+1)>1$, then by \eqref{phiinfinitys<1/2} the sequence $\SUM_{i=-n\atop i\neq i_0}^nO((\widetilde{\phi}(x_i))^{2k_0+1})$ 
is convergent since behaves like the series $\SUM_{i=1}^\infty\frac{1}{i^{2s(2k_0+1)}}$ and 
$$\SUM_{i=-n\atop i\neq i_0}^n|O((\widetilde{\phi})^{2k_0+1})|\leq C\delta^{2s(2k_0+1)}.$$
This estimate, together with \eqref{phisums<1/2estim} imply that 
\beq\label{E_1ests<1/2}|E_1|\leq C\delta^{4s}.\eeq

\noindent {\bf Step 7': Conclusion}\\
Therefore, from \eqref{NL1s<1/2}, \eqref{NL1psis<1/2}, \eqref{NL2s<1/2}, \eqref{Ipsitauxineqest}, \eqref{psitauxineq}, \eqref{psiitaixi0}, \eqref{Bpsitaixi0}, \eqref{NL5s<1/2} and 
\eqref{E_1ests<1/2} we conclude that
$$ \LIM_{n\rightarrow+\infty}NL_L^{\overline{\lam}_\delta^L}[s_{\delta,n}^L]=o(\delta^{2s})\quad\text{as }\delta\rightarrow0$$ 
where $\LIM_{\delta\rightarrow 0}\frac{o(\delta^{2s})}{\delta^{2s}}=0$, uniformly
 for $x\in\R$ and
Lemma \ref{lem::1011} for $s<\frac{1}{2}$ is
proved.\\

\subsection{Proof of Claims 1-6.} 
{\mbox{ }} \medskip

\noindent {\bf Proof of Claim 1.}\\
We have for $n>|i_0|$
\begin{equation*}\begin{split}\SUM_{i=-n\atop i\neq
i_0}^n\frac{x-i}{|x-i|^{1+2s}}&=\SUM_{i=-n}^{i_0-1}\frac{i_0+\gamma-i}{(i_0+\gamma-i)^{1+2s}}+\SUM_{i=i_0+1}^n\frac{i_0+\gamma-i}{(i-i_0-\gamma)^{1+2s}}\\&
=\SUM_{i=1}^{n+i_0}\frac{1}{(i+\gamma)^{2s}}-\SUM_{i=1}^{
n-i_0}\frac{1}{(i-\gamma)^{2s}}\\&
\end{split}\end{equation*}
Using that, for some $\theta_i\in(-1,1)$

$$\frac{(i-\gamma)^{2s}-(i+\gamma)^{2s}}{(i+\gamma)^{2s}(i-\gamma)^{2s}}=\frac{4s\gamma(i-\theta_i\gamma)^{2s-1}}{(i+\gamma)^{2s}(i-\gamma)^{2s}},$$ we get
\begin{equation*}\begin{split}
\SUM_{i=-n\atop i\neq
i_0}^n\frac{x-i}{|x-i|^{1+2s}}&=
\begin{cases}     \SUM_{i=1}^{
n}\frac{4s\gamma(i-\theta_i\gamma)^{2s-1}}{(i+\gamma)^{2s}(i-\gamma)^{2s}}, & \hbox{if } i_0=0\\
\noalign{\vskip6pt}
    \SUM_{i=1}^{
n-i_0}\frac{4s\gamma(i-\theta_i\gamma)^{2s-1}}{(i+\gamma)^{2s}(i-\gamma)^{2s}}+\SUM_{i=n-i_0+1}^{
n+i_0}\frac{1}{(i+\gamma)^{2s}}, & \hbox{if }i_0>0 \\
\noalign{\vskip6pt}
\SUM_{i=1}^{
n+i_0}\frac{4s\gamma(i-\theta_i\gamma)^{2s-1}}{(i+\gamma)^{2s}(i-\gamma)^{2s}}-\SUM_{i=n+i_0+1}^{
n-i_0}\frac{1}{(i-\gamma)^{2s}}, & \hbox{if }i_0<0 \\
\end{cases}\\ \,\\&
\rightarrow -\SUM_{i=1}^{
+\infty}\frac{4s\gamma(i-\theta_i\gamma)^{2s-1}}{(i+\gamma)^{2s}(i-\gamma)^{2s}}\quad\text{as }n\rightarrow+\infty.
\end{split}\end{equation*}

Let us prove the second limit of the
claim.
\begin{equation*}
\SUM_{i=-n}^{i_0-1}\frac{1}{|x-i|^{1+2s}}=
\SUM_{i=1}^{n+i_0}\frac{1}{(i+\gamma)^{1+2s}}\rightarrow
\SUM_{i=1}^{+\infty}\frac{1}{(i+\gamma)^{1+2s}}\quad\text{as
}n\rightarrow+\infty.\end{equation*} Finally
\begin{equation*}\SUM_{i=i_0+1}^{n}\frac{1}{|x-i|^{1+2s}}=
\SUM_{i=1}^{n-i_0}\frac{1}{(i-\gamma)^{1+2s}}\rightarrow
\SUM_{i=1}^{+\infty}\frac{1}{(i-\gamma)^{1+2s}}\quad\text{as
}n\rightarrow+\infty,
\end{equation*}
and the claim  is proved.
\medskip

\noindent {\bf Proof of Claim 2.}\\

When $s<\frac{1}{2}$, we assume that $W$ is even and this implies  that the function
$$\phi(x)-\frac{1}{2}$$ is odd, which means that $\phi$ satisfies
$$\phi(-x)=-\phi(x)+1,$$ and therefore for any integer $k\geq 1$
$$[\phi(-x)]^{2k-1}=[-\phi(x)+1]^{2k-1}=-[\phi(x)-1]^{2k-1}.$$
For simplicity, let us assume $i_0>0$. We have 
\beqs\begin{split}
\SUM_{i=-n\atop
i\neq i_0}^n[\widetilde{\phi}(x_i)]^{2k-1}&=\SUM_{i=-n}^{i_0-1}[\phi(x_i)-1]^{2k-1}+\SUM_{i=i_0+1}^n[\phi(x_i)]^{2k-1}\\&
=\SUM_{i=1}^{n+i_0}\left[\phi\left(\frac{i+\gamma}{\delta|p_0|}\right)-1\right]^{2k-1}+\SUM_{i=1}^{n-i_0}\left[\phi\left(-\frac{i-\gamma}{\delta|p_0|}\right)\right]^{2k-1}\\&
=\SUM_{i=1}^{n+i_0}\left[\phi\left(\frac{i+\gamma}{\delta|p_0|}\right)-1\right]^{2k-1}-\SUM_{i=1}^{n-i_0}\left[\phi\left(\frac{i-\gamma}{\delta|p_0|}\right)-1\right]^{2k-1}\\&
=\SUM_{i=1}^{n-i_0}\left\{\left[\phi\left(\frac{i+\gamma}{\delta|p_0|}\right)-1\right]^{2k-1}-\left[\phi\left(\frac{i-\gamma}{\delta|p_0|}\right)-1\right]^{2k-1}\right\}\\&
+\SUM_{i=n-i_0+1}^{n+i_0}\left[\phi\left(\frac{i+\gamma}{\delta|p_0|}\right)-1\right]^{2k-1}\\&
=\SUM_{i=1}^{n-i_0}\left\{\left[\phi\left(\frac{i+\gamma}{\delta|p_0|}\right)-\phi\left(\frac{i-\gamma}{\delta|p_0|}\right)\right]\right.\\&\cdot
\left.\SUM_{l=0}^{2k-2}\left(\phi\left(\frac{i+\gamma}{\delta|p_0|}\right)-1\right)^{l}\left(\phi\left(\frac{i-\gamma}{\delta|p_0|}\right)-1\right)^{2k-2-l}\right\}\\&
+\SUM_{i=n-i_0+1}^{n+i_0}\left[\phi\left(\frac{i+\gamma}{\delta|p_0|}\right)-1\right]^{2k-1}\\&
=\SUM_{i=1}^{n-i_0}\phi'\left(\frac{i+\theta_i\gamma}{\delta|p_0|}\right)\frac{2\gamma}{\delta|p_0|}\SUM_{l=0}^{2k-2}\left(\phi\left(\frac{i+\gamma}{\delta|p_0|}\right)-1\right)^{l}\left(\phi\left(\frac{i-\gamma}{\delta|p_0|}\right)-1\right)^{2k-2-l}\\&
+\SUM_{i=n-i_0+1}^{n+i_0}\left[\phi\left(\frac{i+\gamma}{\delta|p_0|}\right)-1\right]^{2k-1}\\&
\end{split}\eeqs for some $\theta_i\in(-1,1)$. 
Therefore, using \eqref{phi'infinity} and  \eqref{phiinfinitys<1/2}, we get
\beqs\begin{split} \left|\SUM_{i=-n\atop
i\neq i_0}^n[\widetilde{\phi}(x_i)]^{2k-1}\right|
&\leq C\delta^{2s(2k-1)}|\gamma| \SUM_{i=1}^{n-i_0}\frac{1}{(i-|\gamma|)^{1+2s}}\SUM_{l=0}^{2k-2}\frac{1}{(i-|\gamma|)^{2s(2k-2)}}
\\&+C\SUM_{i=n-i_0+1}^{n+i_0}\frac{1}{|i+\gamma|^{2s(2k-1)}}\\&
\leq Ck\delta^{2s(2k-1)}|\gamma| \SUM_{i=1}^{n-i_0}\frac{1}{(i-|\gamma|)^{1+2s(2k-1)}}+C\SUM_{i=n-i_0+1}^{n+i_0}\frac{1}{|i+\gamma|^{2s(2k-1)}}.
\end{split}\eeqs
Passing to the limit as $n\rightarrow+\infty$, we get \eqref{phisums<1/2estim}

Next, let us turn to the proof of \eqref{Itausum}.  For $i\neq i_0-1,i_0,i_0+1$, and $R=\frac{1}{2\delta|p_0|}$
$$|x_i|=\frac{|i_0+\gamma-i|}{\delta|p_0|}\geq \frac{3}{2\delta|p_0|}>2R,$$ therefore $\tau(x_i)=0$ and
\beqs\begin{split}0\leq\I[\tau,x_i]&=\int_\R\frac{\tau(y)}{|x_i-y|^{1+2s}}dy\\&
=\int_{-2R}^{2R}\frac{\tau(y)}{|x_i-y|^{1+2s}}dy\\&
\leq \int_{-2R}^{2R}\frac{1}{|x_i-y|^{1+2s}}dy\\&
=\int_{|x_i|-2R}^{|x_i|+2R}\frac{1}{y^{1+2s}}dy\\&
=2s\left[\frac{1}{(|x_i|-2R)^{2s}}-\frac{1}{(|x_i|+2R)^{2s}}\right]\\&
=16s^2R\frac{(|x_i|+2R\theta_i)^{2s-1}}{(|x_i|-2R)^{2s}(|x_i|+2R)^{2s}},
\end{split}\eeqs
for some $\theta_i\in(-1,1)$. Therefore, for $R=\frac{1}{2\delta|p_0|}$ we have
\beqs\begin{split}
0\leq\SUM_{i=-n\atop
i\neq i_0,i_0\pm1}^n\I[\tau,x_i]&\leq 8s^2\delta^{2s}|p_0|^{2s}\SUM_{i=-n}^{i_0-2}\frac{(i_0+\gamma-i+\theta_i)^{2s-1}}{(i_0+\gamma-i-1)^{2s}(i_0+\gamma-i+1)^{2s}}\\&
+8s^2\delta^{2s}|p_0|^{2s}\SUM_{i=i_0+2}^{n}\frac{(-i_0-\gamma+i+\theta_i)^{2s-1}}{(-i_0-\gamma+i-1)^{2s}(-i_0-\gamma+i+1)^{2s}}\\&
=C\delta^{2s}\SUM_{i=2}^{n+i_0}\frac{(i+\gamma+\theta_i)^{2s-1}}{(i+\gamma-1)^{2s}(i+\gamma+1)^{2s}}\\&
+C\delta^{2s}\SUM_{i=2}^{n-i_0}\frac{(i-\gamma+\theta_i)^{2s-1}}{(i-\gamma-1)^{2s}(i-\gamma+1)^{2s}}\\&
\leq C\delta^{2s}\SUM_{i=2}^{n+|i_0|}\frac{1}{\left(i+\frac{1}{2}\right)^{2s}\left(i-\frac{3}{2}\right)},
\end{split}\eeqs
which implies \eqref{phisums<1/2estim}.
\medskip

\noindent {\bf Proof of Claim 3.}\\
Fix
$x\in\R$ and let $i_0\in\Z$ be the closest integer to $x$ such
that $x=i_0+\gamma$, with
$\gamma\in\left(-\frac{1}{2},\frac{1}{2}\right]$ and $|x-i|\geq
\frac{1}{2}$ for $i\neq i_0$. Let $\delta$ be so small that
$\frac{1}{\delta|p_0|}\geq 2$, then $\frac{|x-i|}{\delta|p_0|}\geq
1$ for $i\neq i_0$. Let us first assume $s\geq \frac{1}{2}$. Then, for $n>|i_0|$ using  \eqref{phiinfinity}
and
 \eqref{psiinfinity} we get
\begin{equation*}\begin{split}s_{\delta,n}^L(x)&= \phi(x_{i_0})+\delta^{2s}\psi(x_{i_0})+
i_0+\SUM_{i=-n}^{i_0-1}\left[\phi(x_i)-1+\delta^{2s}\psi(x_i)\right]
\\&+\SUM_{i=i_0+1}^{n}\left[\phi(x_i)+\delta^{2s}\psi(x_i)\right] \\&
\leq C+i_0
-\left(\frac{1}{2s\al}-\delta^{2s}
K_2\right)\delta^{2s}|p_0|^{2s}\SUM_{i=-n\atop i\neq i_0}^{n}\frac{x-i}{|x-i|^{1+2s}}\\&+(K_1+\delta^{2s}
K_3)\delta^{1+2s}|p_0|^{1+2s}\SUM_{i=-n\atop i\neq i_0}^{n}\frac{1}{|x-i|^{1+2s}},
\end{split}\end{equation*}
and
\begin{equation*}\begin{split}
s_{\delta,n}^L(x)&\geq
C+i_0
-\left(\frac{1}{2s\al}-\delta^{2s}
K_2\right)\delta^{2s}|p_0|^{2s}\SUM_{i=-n\atop i\neq i_0}^{n}\frac{x-i}{|x-i|^{1+2s}}\\&-(K_1+\delta^{2s}
K_3)\delta^{1+2s}|p_0|^{1+2s}\SUM_{i=-n\atop i\neq i_0}^{n}\frac{1}{|x-i|^{1+2s}}.\end{split}\end{equation*}
Then from Claim 1 we
conclude that the sequence $\{s_{\delta,n}^L(x)\}_n$ is convergent as $n\rightarrow+\infty$, moreover for $x=i_0+\gamma$, we have

\beq\label{sdeltan-xbddclaim3}|s_{\delta,n}^L(x)-x|\leq C.
\eeq

When $s<\frac{1}{2}$,  the convergence of $\SUM_{i=-n}^{n}\phi(x_i)-n$ follows from \eqref{phisums<1/2estim} for $k=1$. The sum
$\SUM_{i=-n}^{n}\psi(x_i)\tau(x_i)$ is actually the  sum of only three terms, since as we have seen in the proof of Claim 2,   $\tau(x_i)=0$ for $i\neq i_0-1,i_0,i_0+1$. 
This concludes the proof of Claim 3.
\medskip

\noindent {\bf Proof of Claim 4.}\\
To prove the
uniform convergence, it suffices to show that
$\{(s_{\delta,n}^L)'(x)\}_n$ is a Cauchy sequence uniformly on
compact sets. Let us consider a bounded interval $[a,b]$ and let
$x\in[a,b]$. Let us first assume $s\geq\frac{1}{2}$. For $\frac{1}{\delta|p_0|}\geq 2$ and $k>
m>1/2+\max\{|a|,|b|\}$, by \eqref{phi'infinity} and
\eqref{psi'infinity} we have
\begin{equation*}\begin{split}(s_{\delta,k}^L)'(x)-(s_{\delta,m}^L)'(x)&=
\frac{1}{\delta|p_0|}\SUM_{i=-k}^{-m-1}\left[\phi'(x_i)+\delta^{2s}\psi'(x_i)\right]
+\frac{1}{\delta|p_0|}\SUM_{i=m+1}^{k}\left[\phi'(x_i)+\delta^{2s}\psi'(x_i)\right]\\&
\leq(K_1+\delta^{2s}
K_3)\delta^{2s}|p_0|^{2s}\left[\SUM_{i=-k}^{-m-1}\frac{1}{|x-i|^{1+2s}}+\SUM_{i=m+1}^{k}\frac{1}{|x-i|^{1+2s}}\right]\\&
\leq(K_1+\delta^{2s}
K_3)\delta^{2s}|p_0|^{2s}\left[\SUM_{i=-k}^{-m-1}\frac{1}{|a-i|^{1+2s}}+\SUM_{i=m+1}^{k}\frac{1}{|b-i|^{1+2s}}\right],
\end{split}\end{equation*}and
\begin{equation*}(s_{\delta,k}^L)'(x)-(s_{\delta,m}^L)'(x)\geq
-K_3\delta^{4s}|p_0|^{2s}\left[\SUM_{i=-k}^{-m-1}\frac{1}{|a-i|^{1+2s}}+\SUM_{i=m+1}^{k}\frac{1}{|b-i|^{1+2s}}\right].\end{equation*}
Then by Claim 1
$$\sup_{x\in[a,b]}|(s_{\delta,k}^L)'(x)-(s_{\delta,m}^L)'(x)|\rightarrow0
\quad\text{as }k,m\rightarrow+\infty.$$

When $s<\frac{1}{2}$, the convergence of $\SUM_{i=-n\atop i\neq i_0}^{n}\phi'(x_i)$ is again consequence of estimate  \eqref{phi'infinity},
and the convergence of $\SUM_{i=-n\atop i\neq i_0}^{n}(\psi\tau)'(x_i)$ comes from the fact that this is actually the sum of three terms, being $\tau(x_i)=\tau'(x_i)=0$ 
for $i\neq i_0-1,i_0,i_0+1$. Claim 4 is therefore proved. 
\medskip

\noindent {\bf Proof of Claim 5.}\\
Claim 5 can be proved like Claim 4, using \eqref{phi''infinity}, 
\eqref{psi''infinity} and the properties of $\tau$.\medskip

\noindent {\bf Proof of Claim 6.}\\
Let us first assume $s\geq\frac{1}{2}$. We have
$$\I[\phi]=W'(\phi)=W'(\widetilde{\phi})=W''(0)\widetilde{\phi}+O(\widetilde{\phi})^2.$$
We note that, since $s\geq\frac{1}{2}$,
if $|x|\geq 1$ then $$(\phi(x))^2\leq\frac{C}{|x|^{4s}}\leq \frac{C}{|x|^{1+2s}}.$$
Let $x=i_0+\gamma$ with
$\gamma\in\left(-\frac{1}{2},\frac{1}{2}\right]$, and $n>|i_0|$. From \eqref{phiinfinity} we get
\begin{equation*}\begin{split}\SUM_{i=-n}^n\I[\phi,x_i]&=I[\phi,x_{i_0}]+\SUM_{i=-n\atop i\neq i_0}^n\I[\phi,x_i]\\&
=I[\phi,x_{i_0}]+\SUM_{i=-n\atop i\neq i_0}^n[\al\widetilde{\phi}(x_i)+O(\widetilde{\phi}(x_i))^2]\\&
\leq I[\phi,x_{i_0}]
-\frac{\delta^{2s}|p_0|^{2s}}{2s}\SUM_{i=-n\atop i\neq i_0}^n\frac{x-i}{|x-i|^{1+2s}}+C\SUM_{i=-n\atop i\neq i_0}^n\frac{1}{|x-i|^{1+2s}},
\end{split}\end{equation*} for some $C>0$ and
\begin{equation*}\begin{split}\SUM_{i=-n}^n\I[\phi,x_i]\geq I[\phi,x_{i_0}]
-\frac{\delta^{2s}|p_0|^{2s}}{2s}\SUM_{i=-n\atop i\neq i_0}^n\frac{x-i}{|x-i|^{1+2s}}-C\SUM_{i=-n\atop i\neq i_0}^n\frac{1}{|x-i|^{1+2s}}.
 \end{split}\end{equation*} Then, by Claim 1
$\SUM_{i=-n}^n\I[\phi,x_i]$ converges as $n\rightarrow+\infty$. 

Let us consider now  $\SUM_{i=-n}^n\I[\psi,x_i]$. From the following estimate
\begin{equation*}\begin{split}\I[\psi]&=W''(\widetilde{\phi})\psi+\frac{L}{\al}(W''(\widetilde{\phi})-W''(0))+c\phi'
\\&=W''(0)\psi+\frac{L}{\al}W'''(0)\widetilde{\phi}+O(\widetilde{\phi})\psi+O(\widetilde{\phi})^2+c\phi',\end{split}\end{equation*}
\eqref{phiinfinity}, \eqref{phi'infinity} and
\eqref{psiinfinity} we get
\begin{equation*}\begin{split}
\SUM_{i=-n}^n\I[\psi,x_i]\leq I[\psi,x_{i_0}]
+\widetilde{C}\SUM_{i=-n\atop i\neq i_0}^n\frac{x-i}{|x-i|^{1+2s}}+C\SUM_{i=-n\atop i\neq i_0}^n\frac{1}{|x-i|^{1+2s}},
\end{split}\end{equation*}and

\begin{equation*}\begin{split}
\SUM_{i=-n}^n\I[\psi,x_i]\geq I[\psi,x_{i_0}]
+\widetilde{C}\SUM_{i=-n\atop i\neq i_0}^n\frac{x-i}{|x-i|^{1+2s}}-C\SUM_{i=-n\atop i\neq i_0}^n\frac{1}{|x-i|^{1+2s}},
\end{split}\end{equation*}
 for some $\widetilde{C}\in\R$ and $C>0$, which ensures the convergence of
$\SUM_{i=-n}^n\I[\psi,x_i]$.

Now, let us assume $s<\frac{1}{2}$.  Fix $k_0$ such that $2s(2k_0+1)>1$. Since $W$ is even,  $W^{2k+1}(0)=0$ for any integer $k\geq1$. Then
\beqs\I[\phi]=W'(\widetilde{\phi})=W''(0)\widetilde{\phi}+W^{IV}(0)(\widetilde{\phi})^3+...+W^{2k_0}(0)(\widetilde{\phi})^{2k_0-1}+O((\widetilde{\phi})^{2k_0+1}).\eeqs
Therefore, for $x=i_0+\gamma$
\beqs\begin{split} \SUM_{i=-n}^n\I[\phi,x_i]&=\I[\phi,x_{i_0}]+\SUM_{i=-n\atop i\neq i_0}^n \I[\phi,x_i]\\&
=\I[\phi,x_{i_0}]+W''(0)\SUM_{i=-n\atop i\neq i_0}^n\widetilde{\phi}(x_i)+W^{IV}(0)\SUM_{i=-n\atop i\neq i_0}^n(\widetilde{\phi}(x_i))^3+...\\&
+W^{2k_0}(0)\SUM_{i=-n\atop i\neq i_0}^n(\widetilde{\phi}(x_i))^{2k_0-1}+\SUM_{i=-n\atop i\neq i_0}^nO((\widetilde{\phi}(x_i))^{2k_0+1}).
\end{split}\end{equation*}
The sequence $\SUM_{i=-n\atop i\neq i_0}^nO((\widetilde{\phi}(x_i))^{2k_0+1})$ is convergent since, by \eqref{phiinfinitys<1/2} behaves like $\SUM_{i=1}^n\frac{1}{i^{2s(2k_0+1)}}$ which is convergent being the exponent $2s(2k_0+1)$ greater than 1. The convergence of the remaining sequences is assured by \eqref{phisums<1/2estim}. 

Finally, let us consider $\SUM_{i=-n}^n\I[\psi\tau,x_i]$. The following formula, which can be found for instance in \cite{bpsv} page 7, holds true
\beqs \I[\psi\tau,x_i]=\tau(x_i) \I[\psi,x_i]+\psi(x_i) \I[\tau,x_i]-B(\psi,\tau)(x_i),\eeqs
where
$$B(\psi,\tau)(x_i)=C(s)\int_\R\frac{(\psi(y)-\psi(x_i))(\tau(y)-\tau(x_i))}{|x-y|^{1+2s}}dy.$$
We remark that 
\beqs|B(\psi,\tau)(x_i)|\leq C\int_\R\frac{|\tau(y)-\tau(x_i)|}{|x-y|^{1+2s}}dy=C\int_\R\frac{\tau(y)}{|x-y|^{1+2s}}dy=C\I[\tau,x_i],\eeqs for $i\neq i_0-1,i_0,i_0+1$. Indeed, as we have already pointed out $\tau(x_i)=0$ for these indices. Therefore the sequences $\SUM_{i=-n}^n\psi(x_i) \I[\tau,x_i]$ and $\SUM_{i=-n}^nB(\psi,\tau)(x_i)$ converge by \eqref{Itausum}. 
Also  $\SUM_{i=-n}^n\tau(x_i) \I[\psi,x_i]$ is the sum of only three terms and then we can conclude 
that $\SUM_{i=-n}^n\I[\psi\tau,x_i]$ is convergent as $n\rightarrow+\infty$. This concludes the proof of Claim 6.

\end{document}